\newcommand{\Psf}{\mathsf{P}}
\newcommand{\Qsf}{\mathsf{Q}}
\def\F{\mathcal{F}}
\def\P{\mathcal{P}}
\newcommand{\uh}{\upharpoonright}
\newcommand{\qvdash}{\operatorname{{?}{\vdash}}}
\newcommand{\nqvdash}{\operatorname{{?}{\nvdash}}}
\newcommand{\RCA}[0]{\mathsf{RCA}}
\newcommand{\WKL}[0]{\mathsf{WKL}}
\newcommand{\WWKL}[0]{\mathsf{WWKL}}
\newcommand{\ACA}[0]{\mathsf{ACA}}
\newcommand{\ATR}[0]{\mathsf{ATR}}
\newcommand{\RRT}[0]{\mathsf{RRT}}
\newcommand{\FS}[0]{\mathsf{FS}}
\newcommand{\PFS}[0]{\mathsf{PFS}}
\newcommand{\TS}[0]{\mathsf{TS}}
\newcommand{\RT}[0]{\mathsf{RT}}
\newcommand{\NN}[0]{\mathbb{N}}
\newcommand{\W}{\mathrm{W}}
\newcommand{\sW}{\mathrm{sW}}
\newcommand{\lex}{\mathrm{lex}}
\newcommand{\base}{\operatorname{base}}
\def\qt#1{``#1''}%
\title{Ramsey-like theorems for the Schreier barrier}
\date{\today}
\newtheorem{theorem}{Theorem}
\numberwithin{theorem}{section}
\newtheorem{lemma}[theorem]{Lemma}
\newtheorem{question}[theorem]{Question}
\newtheorem{proposition}[theorem]{Proposition}
\newtheorem{remark}[theorem]{Remark}
\newtheorem{definition}[theorem]{Definition}
\newtheorem{corollary}[theorem]{Corollary}
\newcommand{\R}{\mathcal{R}}
\newcommand{\T}{\mathcal{T}}
\newtheorem*{rep@theorem}{\rep@title}
\newcommand{\newreptheorem}[2]{%
\newenvironment{rep#1}[1]{%
 \def\rep@title{#2 \ref{##1}}%
 \begin{rep@theorem}}%
 {\end{rep@theorem}}}
\newcommand{\ludovic}[1]{\textcolor{blue}{#1}}
\newcommand{\lorenzo}[1]{\textcolor{orange}{#1}}
\author{Lorenzo Carlucci \and Oriola Gjetaj \and Quentin Le Houérou \and Ludovic Levy Patey}
\begin{document}

\maketitle

\begin{abstract}
The family of finite subsets $s$ of the natural numbers such that $|s|=1+\min s$ is known as the Schreier barrier in combinatorics and Banach Space theory, and as the family of exactly $\omega$-large sets in Logic. We formulate and prove the generalizations of Friedman's Free Set and Thin Set theorems and of Rainbow Ramsey's theorem to colorings of the Schreier barrier. We analyze the strength of these theorems from the point of view of Computability Theory and Reverse Mathematics. Surprisingly, the exactly $\omega$-large counterparts of the Thin Set and Free Set theorems can code $\emptyset^{(\omega)}$, while the exactly $\omega$-large Rainbow Ramsey theorem does not code the halting set.
\end{abstract}

\section{Introduction and motivation}

Ramsey's theorems have been widely investigated from the point of view of Computability Theory, Proof Theory and Reverse Mathematics (see \cite{hirschfeldt2015slicing} for details and references).
In his seminal paper, Jockusch \cite{JockuschRamsey_1972} gave a deep analysis of Ramsey's theorem using tools from Computability Theory, which established this theorem as an important bridge between Combinatorics and Computability.
The effective and logical strength of many consequences and variants of Ramsey's theorem have since been investigated. Among those, the Free Set, Thin Set and Rainbow Ramsey theorems have attracted significant interest in recent decades (see, e.g., \cite{Cholak_Giusto_Hirst_Jockusch_2005, wang2014some, Pat:16, cholak2020thin, Liu2022-LIUTRM}), due to the peculiar behavior of these theorems when compared to Ramsey's theorem.
The Free Set Theorem (denoted $\FS^n$), introduced in the context of Reverse Mathematics by Harvey Friedman \cite{Fri-Sim:00}, states that for every coloring of the $n$-subsets of the natural numbers in unboundedly many colors, there exists an infinite set $H$ of natural numbers such that for all $n$-subsets $s$ of $H$, the color of $s$ is either not in $H$ or else is in $s$ itself. Such a set is called free for the coloring. The Thin Set Theorem (denoted $\TS^n_\omega$ or $\TS^n$) is a weak variant of the Free Set Theorem asserting that for any coloring of the $n$-subsets of the natural numbers there is an infinite set $H$ of natural numbers such that the $n$-subsets of $H$ avoid at least one color. The Rainbow Ramsey Theorem (denoted $\RRT^n_k$) asserts that for every coloring of the $n$-subsets of the natural numbers in which each color is used at most $k$ times, there is an infinite set $H$ of natural numbers such that the coloring assigns different colors to different $n$-subsets of $H$. 
While Ramsey's theorem for colorings of $3$-subsets already codes the halting set (meaning there exists a computable coloring of $3$-subsets of $\NN$ such that any of its solutions computes the halting set), none of these principles does the same for any dimension $n\geq 3$. This surprising result is due to Wang \cite{wang2014some}. 

In the present paper we consider generalizations of the Free Set, Thin Set and Rainbow Ramsey theorems to colorings of objects of unbounded dimension. More precisely, we focus on the natural extensions of these principles to colorings of so-called exactly $\omega$-large sets, i.e., finite sets $s$ of natural numbers such that $|s|=1+\min s$.\footnote{The inessential variant with $|s|=\min s$ is also common in the literature.} 

The concept of $\omega$-large (or relatively large) finite subset of the natural numbers is well-known in the proof theory of Arithmetic, as it is the basic ingredient for the celebrated Paris-Harrington independence result for Peano Arithmetic \cite{Paris1977-PARAMI}. In this context, a finite set $s$ of natural numbers is called relatively large (or $\omega$-large) if $|s| \geq \min s$.

Relatively large sets also naturally arise in Ramsey Theory for purely combinatorial reasons. It is well-known, and easy to prove, that the natural generalization of Ramsey's theorem to finite colorings of {\em all finite sets} is a false principle. This is easily witnessed by coloring according to the parity of the size of the set. The following weakening is also false: for every finite coloring $f$ of the finite subsets of the natural numbers there exists an infinite set $H$ of natural numbers such that for infinitely many $n$, $f$ is constant on the $n$-subsets of $H$. Interestingly, a counterexample is given by the coloring that assigns one color to all relatively large sets and the opposite color to all other sets.

While $\omega$-large sets provide a counterexample to the natural extension of Ramsey's theorem to colorings of all finite sets, exactly $\omega$-large sets provide a way to obtain true versions of Ramsey's theorem for colorings of families of finite sets containing elements of unbounded size. Weakening the requirement of homogeneity from all finite sets to all exactly $\omega$-large sets results in a true principle, sometimes called the Large Ramsey Theorem, which we denote by $\RT^{!\omega}$ following \cite{carlucci2014strength}. This principle is arguably the simplest example of a true version of Ramsey's theorem for colorings of objects of {\em unbounded dimensions}, whereas, as noted above, Ramsey's theorem fails for all finite subsets of natural numbers. $\RT^{!\omega}$ is also the base case of a far-reaching generalization of Ramsey's theorem due to Nash-Williams, which ensures monochromatic sets for every finite coloring of families of finite subsets of the natural numbers satisfying some specific properties and called {\em barriers} \cite{Todorcevic+2010}. In this context the family of exactly $\omega$-large sets is known as the Schreier barrier \cite{Todorcevic+2010}.
The Large Ramsey Theorem has been studied from the perspective of Computability Theory and Reverse Mathematics by Carlucci and Zdanowski \cite{carlucci2014strength}, and its generalization to barriers in Computability Theory by Clote~\cite{clote1984recursion}. They proved that it is computationally and proof-theoretically stronger than the usual Ramsey's theorem for each fixed finite dimension ($\RT^n_k$) and even stronger than Ramsey's theorem for all finite dimensions ($\forall n \RT^n_k$). From the point of view of Computability, the theorem corresponds to the $\omega$-th Turing jump $\emptyset^{(\omega)}$; in Reverse Mathematics terms, it is equivalent to $\ACA_0^+$ over $\RCA_0$ (where $\ACA_0^+$ extends $\RCA_0$ by the axiom of closure under the $\omega$-th Turing jump).

In the present paper we formulate and prove the natural generalizations of the Free Set, Thin Set and Rainbow Ramsey theorems to colorings of exactly $\omega$-large sets and we investigate their effective and logical strength. 

\subsection{Framework}
We shall study our statements using two frameworks: Reverse Mathematics and Weihrauch analysis.

\emph{Reverse Mathematics} is a foundational program whose goal is to find optimal axioms to prove ordinary theorems. It uses the framework of subsystems of second-order arithmetic, with a base theory, $\RCA_0$ (Recursive Comprehension Axiom), capturing \emph{computable mathematics}. More precisely, $\RCA_0$ consists of the axioms of Robinson arithmetic, together with the $\Sigma^0_1$-induction scheme and the $\Delta^0_1$-comprehension scheme.
The \emph{$\Sigma^0_1$-induction scheme} states, for every $\Sigma^0_1$-formula $\varphi(x)$,
$$
[\varphi(0) \wedge \forall x (\varphi(x) \rightarrow \varphi(x+1))] \rightarrow \forall y \varphi(y).
$$
The \emph{$\Delta^0_1$-comprehension scheme} states, for every $\Sigma^0_1$-formula $\varphi(x)$ and every $\Pi^0_1$-formula $\psi(x)$,
$$
[\forall x(\varphi(x) \leftrightarrow \psi(x))] \rightarrow \exists Y \forall x (x \in Y \leftrightarrow \varphi(x)).
$$
There exist four other subsystems which, together with $\RCA_0$, calibrate the strength of most theorems. These subsystems are known as the \qt{Big Five} (see Mont\'alban~\cite{montalban_open_2011}). Among these, $\ACA_0$ (Arithmetic Comprehension Axiom) extends the axioms of $\RCA_0$ with the comprehension scheme for all arithmetic formulas. Based on the correspondence between computability and definability, it is equivalent to stating the existence of the Turing jump of any set ($\forall X \exists Y (Y = X')$).

We shall also consider two lesser-known stronger variants of $\ACA_0$, namely, $\ACA_0'$ and $\ACA_0^+$. The system $\ACA_0'$ extends $\RCA_0$ with the axiom stating closure under all finite jumps; the system $\ACA_0^+$ extends $\RCA_0$ with the axiom stating the existence of the $\omega$-jump of any set. The $\omega$-jump of a set~$X$ is the set $X^{(\omega)} = \bigoplus_n X^{(n)}$. The system $\ACA_0^+$ is famous for being the best known upper bound to Hindman's theorem.


Some of our results are expressed in terms of reductions. All principles studied in this paper are of the following logical form: $\forall X (I(X) \to \exists Y S(X,Y))$, where $I(X)$ and $S(X,Y)$ are arithmetical formulas and $X$ and $Y$ are set variables. We refer to such theorems as $\forall\exists$-principles. For principles $\Psf$ of this form we call any $X$ that satisfies $I$ an {\em instance} of $\Psf$ and any $Y$ that satisfies $S(X,Y)$ a {\em solution to} $\Psf$ {\em for} $X$.
We will use the following notion of computable reducibility whose variants became of central interest in Computability Theory and Reverse Mathematics in recent years (see \cite{Dza-Mum:22} for background and motivation).

\begin{definition}
    $\Qsf$ is {\em strongly Weihrauch reducible} to $\Psf$ (denoted $\Qsf \leq_{\sW} \Psf$) if there exist Turing functionals $\Phi$ and $\Psi$ such that for every instance $X$ of $\Qsf$ we have that $\Phi(X)$ is an instance of $\Psf$, and if $Y$ is a solution to $\Psf$ for $\Phi(X)$ then $\Psi(Y)$ is a solution to $\Qsf$ for $X$.
\end{definition}

\subsection{Organization of the paper}

In \Cref{sec:fs_ts_rainbow} we recall the usual, finite-dimensional versions of the Free Set, Thin Set and Rainbow Ramsey theorems and we observe that these principles fail when generalized to all finite sets. For this, we use the notion of exactly $\omega$-large set. Then, in \Cref{sec:fs_ts_rainbow_from_rt}, we recall known facts about the extension of Ramsey's theorem to colorings of exactly $\omega$-large sets and show that the generalized Free Set, Thin Set and Rainbow Ramsey theorems are consequences of the corresponding generalization of Ramsey's theorem. This is analogous to the finite-dimensional case, but some of the proofs require non-trivial adaptations. 
In \Cref{sec:ts_fs_lowerbd} we show that, contrary to their finite-dimensional counterparts which do not code any non-computable set and do not imply $\ACA_0$, the Free Set and Thin Set theorems for exactly $\omega$-large sets code $\emptyset^{(\omega)}$ and imply $\ACA_0^+$. In \Cref{sec:coloring-barriers}, we generalize the statements to a more robust version in terms of barriers, in order to prove upper bounds on a larger class of instances. In \Cref{sec:cone_avoid}, we prove a cone avoidance result for the Rainbow Ramsey Theorem for exactly $\omega$-large sets and, more generally, for colorings of a larger class of barriers of order type $\omega^\omega$. This result entails that none of these principles code the halting set or imply $\ACA_0$. This difference of behavior at the exactly $\omega$-large level is surprising, given the equivalence between the statements $\bigcup_{n \in \NN}\RRT^n$ and $\bigcup_{n \in \NN} \FS^n$ in Reverse Mathematics (see \cite{patey2015somewhere,wang2014some}). 
Last, in \Cref{sec:conclusion}, we conclude and open the discussion to future research directions.

\section{Free sets, thin sets and rainbows}\label{sec:fs_ts_rainbow}

The main goal of this section is to recall the definitions of the usual finite-dimensional Free Set, Thin Set and Rainbow Ramsey Theorem and motivate their generalizations to colorings of exactly $\omega$-large sets in analogy to the case of Ramsey's theorem. 

Let us first fix some notation and recall the definition of these principles for colorings of finite subsets of fixed size. We denote by $\NN$ the set of natural numbers and by $\NN^+$ the set of positive natural numbers. For $X\subseteq \NN$ and $n \in \NN$
we denote by $[X]^{n}$ the set of all $n$-subsets of $X$. We denote by $[X]^{<\omega}$ the set of all finite subsets of $X$. We always assume that sets are presented in increasing order and make no distinction between a set and the sequence of its elements in increasing order. We write $x \cdot y$ for the concatenation of two sequences $x$ and $y$. We identify a natural number with the set of its predecessors, so that, if $k\geq 1$ we can write $f: X \to k$ to declare a function from $X$ to $\{0, 1, \dots, k-1\}$. A function of this type is often called a coloring of $X$ in $k$ colors.

We start by recalling the classical Ramsey's theorem for finite colorings of the $n$-subsets of a countable set. 

\begin{definition}[Ramsey Theorem] Let $n, k\in \NN^+$.  
For every coloring $f:[\NN]^{n} \to k$ there exists an infinite set $H\subseteq \NN$ such that $|f([H]^n)| = 1$. The set $H$ is called homogeneous (or monochromatic) for $f$.
We abbreviate this statement by $\RT^n_k$.
\end{definition}

Ramsey's theorem was first studied in Computability Theory by Jockusch~\cite{JockuschRamsey_1972}, who proved that every computable instance admits an arithmetical solution, and constructed a computable instance of $\RT^3_2$ such that every solution computes the halting set. The formalization of Jockusch's proofs in Reverse Mathematics by Simpson~\cite{SIM:SOSOA} yields that $\RT^1_k$ is provable over~$\RCA_0$ for any standard~$k \in \NN^+$, and $\RT^n_k$ is equivalent to $\ACA_0$ over~$\RCA_0$ for $n \geq 3$ and any $k\geq 2$. The case $n = 2$ was a long-standing open question, until Seetapun~\cite{seetapun1995strength} proved that no computable instance of~$\RT^2_k$ codes the halting set, hence that $\RT^2_k$ is strictly weaker than $\ACA_0$ over~$\RCA_0$.

The following Free Set Theorem, and its weakening the Thin Set theorem, have been introduced by Harvey Friedman \cite{Fri-Sim:00} and first studied in \cite{Cholak_Giusto_Hirst_Jockusch_2005}.

\begin{definition}[Free Set Theorem] Let $n\in\NN^+$.  
For every coloring $f:[\NN]^{n} \to \NN$, there exists an infinite set $H\subseteq \NN$ such that for every set $s\in [H]^{n}$, if $f(s) \in H$ then $f(s) \in s$. The set $H$ is called \emph{free} for $f$ (or $f$-free). We abbreviate this statement by $\FS^n$.
\end{definition}

The notion of free set in Friedman's Free Set Theorem is the same as the one used in a combinatorial characterization theorem for the $\aleph_n$ cardinals by Kuratowski~\cite{Kur:51}. Here, the following property of an $n+1$-subset $U$ of a set $X$ with respect to a function $f:[X]^n \to [X]^{<\omega}$ is considered: for all $x\in U$ we have $x \notin f(U\setminus\{x\})$. For the particular case of a function mapping in (singletons from) $X$ this means that for all $x\in U$ we have $f(U\setminus \{x\}) \neq x$. This is the same as asking that for any $n$-subset $s$ of $U$, if $f(s) \in U$ then $f(s) \in s$, which means that $U$ is free for $f$. Interestingly, in~\Cref{sec:cone_avoid} we are lead to consider extensions of the Free Set Theorem with colors in $[\NN]^{<\omega}$. 

The next theorem is a weakening of the Free Set Theorem. 

\begin{definition}[Thin Set Theorem] For $n\in \NN^+$. 
For every coloring $f:[\NN]^n \to \NN$, there exists an infinite set $H\subseteq \NN$ such that
$f([H]^n) \neq \NN$. The set~$H$ is called \emph{thin} for~$f$ (or $f$-thin). We abbreviate this statement by $\TS^n$.
\end{definition}

The notion of free set might seem ad-hoc at first sight, but can be better understood in the light of the Thin Set Theorem. Indeed, an infinite set $H \subseteq \NN$ is $f$-free if and only if, for every $x \in H$, the set $H \setminus \{x\}$ is $f$-thin with witness color~$x$ (in the sense that $f$ never takes value $x$ on $[H\setminus \{x\}]^n$). Thus, the Free Set Theorem is a natural generalization of the Thin Set Theorem.
The next principle is sometimes called an anti-Ramsey theorem or the Rainbow Ramsey Theorem.
We first need the following definition.

\begin{definition}[$k$-bounded function]
 Let $k\in\NN^+$ and $X$ be a set. A function $f : X \to \NN$ is $k$-bounded if for all $i\in \NN$, 
 $|f^{-1}(i)|\leq k$.
\end{definition}

\begin{definition}[Rainbow Ramsey Theorem]
Let $n, k\in \NN^+$. For all $k$-bounded colorings $f:[\NN]^{n} \to \NN$, there exists an infinite set $H\subseteq \NN$ such that $f$ is injective on $[H]^{n}$. The set~$H$ is called a \emph{rainbow} for $f$ (or an $f$-rainbow).
We abbreviate this statement by $\RRT^{n}_k$. 
\end{definition}

The principles $\TS^n, \FS^n$ and $\RRT^n_k$ have been thoroughly investigated from the point of view of Computability Theory and Reverse Mathematics by a number of authors (\cite{Cholak_Giusto_Hirst_Jockusch_2005, Pat:16, cholak2020thin, Liu2022-LIUTRM}). The general picture that emerged is that these principles are computationally and combinatorially very weak consequences of Ramsey's theorem ($\RT^n \to \FS^n \to \TS^n \wedge \RRT^n$): while $\RT^3$ codes the jump and implies $\ACA_0$, Wang~\cite{wang2014some} showed that the Free Set, Thin Set and Rainbow Ramsey theorems do not code the halting set and satisfy the so-called strong cone avoidance property, which entails that they do not imply $\ACA_0$. 

In the remainder of this section we observe that, similarly to the case of Ramsey's theorem, the natural generalizations of the Free Set, Thin Set and Rainbow Ramsey theorems to colorings of all finite subsets of $\NN$ {\em fail}. 

Let $\FS^{<\omega}$ be the following principle: For every coloring $f:[\NN]^{<\omega} \to \NN$ there exists an infinite free set. Analogously, let $\TS^{<\omega}$ be the following principle: for every coloring $f:[\NN]^{<\omega} \to \NN$ there exists an infinite thin set. 
Finally, let $\RRT^{<\omega}_k$ be the following statement: For all $k$-bounded coloring $f:[\NN]^{<\omega} \to \NN$ there exists
an infinite set $H\subseteq \NN$ such that $f$ is injective on $[H]^{<\omega}$ (i.e., $H$ is a rainbow for $f$).

\begin{proposition}
There exists a coloring of the finite subsets of the natural numbers that admits no infinite thin set (i.e., $\TS^{<\omega}$ is false).
\end{proposition}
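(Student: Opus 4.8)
The plan is to take for $f$ the coloring that records the size of a set, in direct analogy with the parity-of-size coloring that refutes the naive extension of Ramsey's theorem to all finite sets. Concretely, I would define $f : [\NN]^{<\omega} \to \NN$ by $f(\emptyset) = 0$ and $f(s) = |s| - 1$ for nonempty $s$. The precise normalization is inessential; it only serves to make the intended color set exactly $\NN$ rather than $\NN^+$, since otherwise (if the range missed a color) every infinite set would be trivially thin.

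With this choice the verification is immediate. Let $H \subseteq \NN$ be any infinite set and let $c \in \NN$ be any color. Since $H$ is infinite it contains subsets of every finite cardinality, so choosing $s \in [H]^{c+1}$ yields $f(s) = c$. Hence $f([H]^{<\omega}) = \NN$, i.e.\ $H$ is not thin for $f$. As $H$ was an arbitrary infinite set, $f$ has no infinite thin set, which is exactly the failure of $\TS^{<\omega}$.

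There is no real obstacle here; the only point to keep track of is the cosmetic issue of whether the empty set is counted among the elements of $[\NN]^{<\omega}$, which is why $f$ is fixed so that its range on the finite subsets of any infinite set is literally all of $\NN$. I would also note in passing that the same $f$ refutes $\FS^{<\omega}$: if $H$ were an infinite set free for $f$, then for any $x \in H$ the infinite set $H \setminus \{x\}$ would be thin for $f$ with witness color $x$ (since any $s \in [H\setminus\{x\}]^{<\omega}$ with $f(s) = x$ would, by freeness of $H$, force $x \in s$, a contradiction), and this contradicts what was just proved; so no separate construction is needed for the Free Set case.
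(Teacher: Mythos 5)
Your proof is correct and uses the same idea as the paper: color each finite set by its cardinality, so that any infinite set realizes every color. The normalization $f(s)=|s|-1$ and the remark about $\FS^{<\omega}$ are harmless additions but do not change the argument.
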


\begin{proof}
Let $f:[\NN]^{<\omega} \to \NN$ be defined by setting $f(s) = |s|$. 
Let $X$ be an infinite subset of $\NN$. Then $f([X]^{<\omega})= \NN$.
\end{proof}

Similarly to the finite dimensions case~\cite{Cholak_Giusto_Hirst_Jockusch_2005}, the Free Set Theorem for all finite sets implies the Thin Set Theorem for all finite sets. 
We formulate this fact in terms of reductions since the exact same argument applies to other principles of interest in this paper. The observation that the proof of the next proposition (see proof Theorem 3.2 in \cite{Cholak_Giusto_Hirst_Jockusch_2005}) applies to barriers was one of the starting points of the present work. 

\begin{proposition}\label{prop:ts_red_fs}
$\TS^{<\omega}\leq_{\sW} \FS^{<\omega} $.
\end{proposition}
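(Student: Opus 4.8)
The plan is to mimic the finite-dimensional argument from \cite{Cholak_Giusto_Hirst_Jockusch_2005} (proof of Theorem 3.2), using the observation already recorded in the excerpt that a set $H$ is free for $f$ if and only if, for every $x \in H$, the set $H \setminus \{x\}$ is thin for $f$ with witness color $x$. So from an instance $g:[\NN]^{<\omega} \to \NN$ of $\TS^{<\omega}$ I want to build an instance $f = \Phi(g)$ of $\FS^{<\omega}$ whose free sets deliver thin sets for $g$. The standard trick is to have $f$ record, on a finite set $s$, \emph{whether} $g$ takes the value $\max s$ (or some marked element of $s$) somewhere among the subsets of $s$; more precisely, shift coordinates so that the largest element of $s$ plays the role of the ``ambient'' point whose membership in the range we are testing, and use the remaining elements as the genuine input to $g$. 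Then a set free for $f$ cannot contain any point $x$ that is hit by $g$ restricted to sets below $x$, which is exactly a thinness condition once we pass to an appropriate tail.

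Concretely, first I would define $\Phi$: given $g$, let $f(s)$ for a finite set $s = \{a_0 < a_1 < \dots < a_m\}$ be defined by comparing $g(\{a_0,\dots,a_{m-1}\})$ (the coloring applied to $s$ with its top element removed) to $a_m$; output $a_m$ if they are equal and output $0$ (or $\min s$, to stay safely inside $s$ when we want to hide a color) otherwise. Second, I would check $f$ is a legitimate total coloring $[\NN]^{<\omega} \to \NN$ — immediate — and that $\Phi$ is a Turing functional. Third, given an infinite $H$ free for $f$, I would argue that $H$ (possibly after removing its minimum, or a finite initial segment, to absorb edge cases with small sets) is thin for $g$: indeed, if $g$ took \emph{every} value on $[H]^{<\omega}$ it would in particular take the value $x$ for each $x \in H$, witnessed by some finite $t \subseteq H$ with $g(t) = x$; then consider $s = t \cup \{x'\}$ for a suitable $x' \in H$ with $x' > \max t$ playing the role of the top element whose color we are testing — wait, that is the wrong element, so I need the witness set and the tested element lined up. The correct formulation: if $g$ is surjective on $[H]^{<\omega}$, pick $x \in H$ large and $t \subseteq H \cap x$ with $g(t) = x$ (possible once $x$ is past the first coordinate where $g$ hits $x$, using that such a witness set, if it exists at all, can be pushed into $H$ — this is where a little care and possibly strong Weihrauch bookkeeping with the functional $\Psi$ is needed); then $f(t \cup \{x\}) = x \in H$ while $x \notin t \cup \{x\} \setminus$... no, $x \in t\cup\{x\}$.

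So the honest version, matching the excerpt's equivalence, is: to show $H$ is \emph{not} free we only need one $x\in H$ and one $s \in [H\setminus\{x\}]^{<\omega}$ with $g(s) = x$; then $s \cup \{x\}$ has $f(s\cup\{x\}) = x \in H$ and $x \notin s$, contradicting freeness. Hence a free $H$ has the property that for all $x \in H$, $g$ avoids $x$ on $[H \setminus \{x\}]^{<\omega}$; taking $x = \min H$, this gives $g([H \setminus \{\min H\}]^{<\omega}) \not\ni \min H$, so $H \setminus \{\min H\}$ is thin for $g$, and the backward functional $\Psi$ just deletes the least element. The fourth and final step is to package $\Phi$ and $\Psi$ and verify the strong Weihrauch conditions: $\Phi(g)$ is always a valid instance of $\FS^{<\omega}$, and $\Psi$ maps any free set to a thin set. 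The main obstacle I anticipate is purely definitional housekeeping: making sure the ``top element removed'' convention handles singletons and the empty set gracefully and that the resulting $f$ genuinely forces the color $x$ to appear \emph{inside $H$ but outside $s$} rather than accidentally landing in $s$ — in particular checking that when we do not want to hide a color we output an element that is demonstrably in the tested set, and tracking that this is exactly the $[\NN]^{<\omega}$-analogue of the finite-level reduction so that the proof is, as the authors remark, the ``exact same argument''.
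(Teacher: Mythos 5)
There is a genuine gap, and it lies in your forward functional. The transformed coloring you define satisfies $f(s) \in \{\max s, \min s\} \subseteq s$ for every finite $s$ (whether the test succeeds or not), so \emph{every} infinite set is $f$-free and freeness of $H$ for $f$ carries no information about $g$ whatsoever. This is visible in your own verification: when $g(s) = x$ with $s \subseteq H \setminus \{x\}$, you form $t = s \cup \{x\}$ and note $f(t) = x \in H$, but $x \in t$, so the freeness condition \qt{$f(t) \in H \Rightarrow f(t) \in t$} is \emph{satisfied}, not contradicted --- freeness only forbids colors that land in $H$ \emph{outside} the queried set, and you engineered the color to be the maximum of the queried set. (There is also the secondary issue that $f(s \cup \{x\}) = x$ only when $x > \max s$.) Concretely, take $g(s) = |s|$, which admits no infinite thin set: every infinite $H$ is free for your $\Phi(g)$, yet $\Psi(H) = H \setminus \{\min H\}$ cannot be thin for $g$, so the strong Weihrauch conditions fail.

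The fix is to drop the transformation entirely, which is what the paper does: an instance $g$ of $\TS^{<\omega}$ is \emph{already} an instance of $\FS^{<\omega}$, so take $\Phi$ to be the identity. If $A$ is $g$-free, let $\Psi(A) = A \setminus B$ for any nonempty $B \subseteq A$ with $A \setminus B$ infinite (e.g.\ $B = \{\min A\}$). If some $s \in [A \setminus B]^{<\omega}$ had $g(s) = n$ for $n \in B$, then $g(s) = n \in A$, and freeness of $A$ \emph{for $g$ itself} forces $n \in s \subseteq A \setminus B$, a contradiction; so $A \setminus B$ is thin, avoiding every color in $B$. Your closing paragraph contains this argument in essence (the observation that for each $x \in H$ the set $H \setminus \{x\}$ is thin with witness $x$), but as written the contradiction is attributed to freeness for the auxiliary $f$ applied to $s \cup \{x\}$, where it does not hold; the proof only goes through once $f$ is taken to be $g$ and freeness is applied to $s$ itself.
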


\begin{proof}
Let $f:[\NN]^{<\omega}\to \NN$. Let $A$ be an infinite free set for $f$. 
Let $B$ be a non-empty subset of $A$ such that $A\setminus B$ is infinite. 
We claim that $A\setminus B$ is thin for $f$. Assume, by way of contradiction, that for all $n \in \NN$ there exists an $s_n\in [A\setminus B]^{<\omega}$ such that $f(s_n) = n$. Take $n\in B$. Thus, $n \in A$. Since $A$ is free for $f$, it must be the case that $n \in s_n$, contradicting the fact that $s_n$ was chosen 
in $A\setminus B$. 
\end{proof}

As a corollary we obtain the following proposition. 

\begin{proposition}
There exists a coloring of the finite subsets of the natural numbers that admits no infinite free set (i.e., $\FS^{<\omega}$ is false). 
\end{proposition}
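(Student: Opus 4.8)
The plan is to obtain this as an immediate corollary of the two preceding results. We already know that the coloring $f(s) = |s|$ witnesses that $\TS^{<\omega}$ is false, and that $\TS^{<\omega} \leq_{\sW} \FS^{<\omega}$ by \Cref{prop:ts_red_fs}. Since any $\forall\exists$-principle that (strongly) Weihrauch reduces to a principle all of whose instances have solutions must itself have a solution for every instance, the contrapositive yields that $\FS^{<\omega}$ is false because $\TS^{<\omega}$ is false.

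Spelled out, I would argue as follows. Let $f : [\NN]^{<\omega} \to \NN$ be given by $f(s) = |s|$, and suppose toward a contradiction that $A$ is an infinite free set for $f$. Choose a non-empty $B \subseteq A$ such that $A \setminus B$ is still infinite; by the proof of \Cref{prop:ts_red_fs}, the set $A \setminus B$ is then thin for $f$. But $A \setminus B$ is an infinite subset of $\NN$, so $f([A \setminus B]^{<\omega}) = \NN$, contradicting thinness. Hence $f$ admits no infinite free set. (One can also bypass the reduction entirely: with the same $f$, if $A$ is infinite, pick any $a \in A$ with $a \geq 1$ and any $s \in [A \setminus \{a\}]^{<\omega}$ with $|s| = a$, which exists since $A \setminus \{a\}$ is infinite; then $f(s) = a \in A$ while $a \notin s$, so $A$ is not free.)

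There is no genuine obstacle here, as the statement is a corollary; the only point worth flagging is that the strength of the reduction plays no role — all that is used is that a counterexample to the Thin Set principle is, verbatim, a counterexample to the Free Set principle, which is precisely the forward direction of \Cref{prop:ts_red_fs} read contrapositively.
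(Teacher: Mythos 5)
Your proposal is correct and matches the paper's approach: the paper states this proposition as an immediate corollary of the falsity of $\TS^{<\omega}$ together with \Cref{prop:ts_red_fs}, which is exactly your main argument (your spelled-out contradiction and the direct counterexample are fine but not needed).
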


We next show that the natural generalization of the Rainbow Ramsey Theorem to colorings of all finite subsets of the natural numbers fails.
The proof features the notion of exactly $\omega$-large set, which is central for the present paper. 

\begin{definition}[(Exactly) $\omega$-large sets] 
A finite $s\subseteq \NN$ is $\omega$-large if $|s| \geq 1+\min s$ and is exactly $\omega$-large if $|s| = 1+\min s$.
\end{definition}

The $\omega$-large sets are also known as {\em relatively large} sets in the literature. For an infinite $X\subseteq \NN$ we denote by $[X]^{!\omega}$ the family of all exactly $\omega$-large subsets of $X$.
The family $[\NN]^{!\omega}$ coincides with the famous Schreier barrier used in better quasi ordering theory and Banach Space Theory~\cite{Todorcevic+2010}.

\begin{proposition}\label[proposition]{prop:rrt-full-is-false}
There exists a $2$-bounded coloring of the finite subsets of the natural numbers that admits no infinite rainbow (i.e., $\RRT^{<\omega}_2$ is false).
\end{proposition}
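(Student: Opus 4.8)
The plan is to exploit the fact that a $2$-bounded coloring without a rainbow is essentially a partial matching on the finite sets: the colors that are used exactly twice single out a family of pairwise disjoint pairs $\{s,t\}$ of finite sets, and ``$H$ is a rainbow'' fails precisely when $H$ contains both members of some such pair. To make the matching easy to hit, I would use \emph{nested} pairs, with $t\subsetneq s$, so that $s\subseteq H$ already forces $t\subseteq H$; then it suffices that every infinite $H$ contains the larger member of some pair. Concretely, to each exactly $\omega$-large set $v$ with $\min v\geq 1$ I associate the pair $\{\,v,\ v\setminus\{\min v\}\,\}$, give each such pair its own color, and give every finite set lying in no pair a fresh distinct color.

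The first point to check is that this really defines a $2$-bounded coloring, i.e.\ that the chosen pairs are pairwise disjoint. This rests on an observation in which the defining equation $|v|=1+\min v$ of the Schreier barrier is used essentially: the map $v\mapsto v\setminus\{\min v\}$ is injective on exactly $\omega$-large sets, since if $w=v\setminus\{\min v\}$ then $|w|=|v|-1=\min v$, so $\min v$ is recovered as $|w|$ and $v=\{|w|\}\cup w$. Moreover each truncation $w=v\setminus\{\min v\}$ satisfies $|w|=\min v<\min w$, hence is never itself exactly $\omega$-large; so a truncation is never also a ``$v$'', and no finite set can belong to two pairs. Thus $f$ is well defined and $2$-bounded.

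It then remains to verify that $f$ admits no infinite rainbow. Given an infinite $H\subseteq\NN$, I produce an exactly $\omega$-large $v\subseteq H$ with $\min v\geq 1$: if $\min H\geq 1$, take $v$ to be the set of the first $1+\min H$ elements of $H$; if $0\in H$, do the same inside $H\setminus\{0\}$. Since $v\setminus\{\min v\}\subseteq v\subseteq H$ and $f(v)=f(v\setminus\{\min v\})$ while $v\neq v\setminus\{\min v\}$, the coloring $f$ is not injective on $[H]^{<\omega}$, so $H$ is not a rainbow.

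I expect the only genuine subtlety to be the choice of which element to drop from $v$. Dropping the maximum, or any non-minimal element, does \emph{not} yield an injective map — an $m$-set can be completed to an exactly $\omega$-large set in infinitely many ways — and pairing exactly $\omega$-large sets with one another is hopeless, since the Schreier barrier is a $\subseteq$-antichain; it is precisely dropping the \emph{minimum} that lets the cardinality of the truncation encode $\min v$ and thereby makes the matching well defined.
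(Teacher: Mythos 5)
Your proof is correct and takes essentially the same route as the paper: the paper pairs each exactly $\omega$-large set $t$ with its truncation $t\setminus\{\min t\}$ (calling the latter \emph{quasi-exactly $\omega$-large}), uses exactly your uniqueness observation --- that $t$ is recoverable from the truncation, so the pairs are disjoint --- to get $2$-boundedness via a bijection $b:[\NN]^{<\omega}\to\NN$, and concludes as you do by noting that any infinite $H$ contains an exactly $\omega$-large set together with its truncation. The only cosmetic difference is your restriction to $\min v\geq 1$, which the paper's version does not need.
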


\begin{proof}
A finite set $s \subseteq \NN$ is called {\em quasi-exactly $\omega$-large} if there exists an exactly $\omega$-large set $t$ such that
$s= t \setminus \{\min t\}$. Obviously if $t$ is exactly $\omega$-large then $t\setminus \{\min t\}$ is quasi exactly $\omega$-large. Also it
is easy to see that if $s$ is quasi exactly $\omega$-large then there exists a unique exactly $\omega$-large $t$ such that $s = t \setminus \{\min t\}$.
Let $b:[\NN]^{<\omega} \to \NN$ be a bijection. We define a $2$-bounded coloring $f:[\NN]^{<\omega} \to \NN$ as follows. 
If $s$ is neither exactly $\omega$-large nor quasi exactly $\omega$-large then $f(s) = b(s)$. If $s$ is exactly $\omega$-large then $f(s) = f(s\setminus \{\min s\}) = b(s).$
Let $H\subseteq \NN$ be infinite. Then for any $s \in [H]^{!\omega}$ we have $s\setminus \{\min s\} \in [H]^{<\omega}$. 
But for any such $s$ we have $f(s) = f(s\setminus \{\min s\})$. Thus $H$ is not a rainbow for $f$. 

\end{proof}

\section{Colorings of exactly $\omega$-large sets}\label{sec:fs_ts_rainbow_from_rt}

In this section, we introduce and prove the generalizations of the Free Set, Thin Set and Rainbow Ramsey theorems to colorings of exactly~$\omega$-large sets. We also establish some basic implications and relations among those principles.

As mentioned, Ramsey's theorem fails when generalized to the family of all finite sets. On the other hand, there exists a natural generalization of Ramsey's theorem to some families of finite sets of {\em unbounded} size that is central for our investigation. The following is the generalization of Ramsey's theorem to colorings of exactly $\omega$-large sets, as a particular case of more general theorems by Pudl\'ak and R\"{o}dl~\cite{Pud-Rod:82} and by Farmaki and Negrepontis~\cite{Far-Neg:08}. 
Besides, it is a particular case of Nash-Williams' generalization of Ramsey's theorem to barriers, which in turn is a consequence of the Clopen Ramsey Theorem (see~\cite{SIM:SOSOA}). 





\begin{definition}[Large Ramsey Theorem]\label{def:largeRT} Let $k\in \NN^+$. 
For every coloring $f:[\NN]^{!\omega} \to k$, there exists an infinite set $H \subseteq \NN$ such that $|f([H]^{!\omega})| = 1$. We abbreviate this statement by  $\RT^{!\omega}_k$.
\end{definition}

The classical Ramsey Theorem is a statement about cardinality, in the precise sense that $\RT^n_k$ proves over $\RCA_0$ the following stronger statement \qt{For every infinite set $X \subseteq \NN$ and every coloring $f : [X]^n \to k$, there is an infinite $f$-homogeneous set~$H \subseteq X$.} Proposition 8.3.4 of \cite{Dza-Mum:22} gives the equivalence of this general version of Ramsey's Theorem -- denoted $\mathsf{General}$-$\RT^n_k$ in \cite{Dza-Mum:22} -- with $\RT^n_k$ both in terms of Weihrauch reductions and in terms of provability over $\RCA_0$. The situation is more complex in the case of Large Ramsey Theorem, as there is no clear direct equivalence between $\RT^{!\omega}_k$ and its general version ensuring homogeneous sets in any countable subset of $\NN$.
Nevertheless, the classical proof and the computability-theoretic bounds of Large Ramsey Theorem as defined in Definition \ref{def:largeRT} hold for the general version of the statement. Because of this, Large Ramsey Theorem can arguably be considered as a non-robust statement. A first solution, adopted by Carlucci and Zdanowski~\cite{carlucci2014strength}, consisted in directly studying the theorem in its general formulation, where an instance is a pair $(X, f)$ with $X$ an infinite subset of $\NN$ and $f$ a $k$-coloring of the exactly large subsets of $X$. However, making the domain part of the instance raises some issues when considering strong cone avoidance as we do in~\Cref{sec:cone_avoid} for the Large Ramsey Theorem, since the domain can be chosen to be sparse enough to compute any hyperarithmetic set. We shall therefore adopt a different approach, and prove our lower bounds in terms of the weaker versions of our statements, while proving the cone avoidance results on a generalized version formulated in terms of barriers, that will be presented in~\Cref{sec:cone_avoid}. All these versions are equivalent over~$\RCA_0$ to $\ACA_0^+$, so either formulation can be chosen.
The following theorem summarizes the known bounds on $\RT^{!\omega}_k$. 

\begin{theorem}\label{thm:car-zda}
\leavevmode
\begin{enumerate}
\item All computable finite colorings of  $[\NN]^{!\omega}$ admit an infinite monochromatic set computable in $\emptyset^{(\omega)}$. 
\item There exists a computable coloring of $[\NN]^{!\omega}$ in $2$ colors such that all infinite monochromatic sets compute 
$\emptyset^{(\omega)}$. 
\item $\RT^{!\omega}_2$ is equivalent to $\ACA_0^+$ over $\RCA_0$.
\end{enumerate}
\end{theorem}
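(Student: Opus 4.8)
The plan is to obtain the reverse-mathematical equivalence (3) from the two computability-theoretic statements (1) and (2), so the real work lies in those. For (1), I would iterate Ramsey's theorem along the Schreier barrier. Given a computable $f : [\NN]^{!\omega} \to k$, build an infinite set $\{h_0 < h_1 < \cdots\}$ together with a decreasing sequence of infinite reservoirs $\NN = R_0 \supseteq R_1 \supseteq \cdots$: at stage $i$ put $h_i = \min R_i$, consider the residual colouring $g_i : [R_i \setminus \{h_i\}]^{h_i} \to k$ given by $g_i(t) = f(\{h_i\} \cup t)$, and apply $\RT^{h_i}_k$ to get an infinite $g_i$-homogeneous $R_{i+1} \subseteq R_i \setminus \{h_i\}$ of colour $c_i$. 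Since an exactly $\omega$-large subset of $\{h_i : i \in \NN\}$ with minimum $h_i$ has size $h_i + 1$, hence consists of $h_i$ together with an $h_i$-subset of $\{h_j : j > i\} \subseteq R_{i+1}$, every such set has colour $c_i$; fixing (non-uniformly) a colour $c$ occurring infinitely often among the $c_i$ and putting $I = \{i : c_i = c\}$, the set $\{h_i : i \in I\}$ is monochromatic for $f$. For the bound, a $Z$-computable instance of $\RT^n_k$ has an infinite homogeneous set computable in a finite jump of $Z$, uniformly; so each stage uses only finitely many jumps, whence $(R_i)_i, (h_i)_i, (c_i)_i$ are computable in $\emptyset^{(\omega)}$, and once $c$ is fixed so is $\{h_i : i \in I\}$. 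Only the choice of $c$ is non-uniform, which is harmless since we merely claim existence of an $\emptyset^{(\omega)}$-computable monochromatic set.

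For (2), the key is that an exactly $\omega$-large $s$ with $\min s = n$ carries exactly $n$ coordinates $a_1 < \cdots < a_n$ above its minimum --- precisely the number of nested limits needed to compute $\emptyset^{(n)}$. Fixing, uniformly in $n$, a computable iterated approximation of $\emptyset^{(n)}$ whose soundness propagates through its $n$ nesting levels (available since $\emptyset^{(n)}$ is c.e.\ in $\emptyset^{(n-1)}$, recursively), one defines the colour of $s$ so as to record whether a \qt{mind change} occurs, at some nesting level and some query below $n$, across the approximation-stage configurations coded by $a_1, \ldots, a_n$; this generalises Jockusch's $3$-dimensional colouring $\{x<y<z\} \mapsto [\emptyset'_y\!\uh\! x = \emptyset'_z\!\uh\! x]$. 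If $H$ is infinite and monochromatic for $f$, its colour cannot be the \qt{mind-change} one: choosing coordinates inside $H$ spread out enough --- outermost level first --- to have passed all relevant thresholds yields an exactly $\omega$-large subset of $H$ with no mind change at any level, a contradiction, since each level of the approximation is eventually constant and there are only finitely many levels. Hence no mind change ever occurs along $H$, so by soundness the approximations are already correct along $H$; for every $h \in H$ the set $H$ reads off $\emptyset^{(n)}$ for all $n \leq h$ by evaluating the approximations at sufficiently deep elements of $H$ (which it may trust, knowing the colour), and since $H$ is cofinal it computes $\bigoplus_n \emptyset^{(n)} = \emptyset^{(\omega)}$.

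For (3): the direction $\RT^{!\omega}_2 \to \ACA_0^+$ is immediate from the relativisation of (2). Given a set $X$, the relativised colouring of (2) exists in $\RCA_0$ (its graph is $\Delta^0_1(X)$); by $\RT^{!\omega}_2$ it has an infinite monochromatic set $H$; by the formalised, relativised argument of (2), $X^{(\omega)}$ is $\Delta^0_1(H \oplus X)$ and so exists --- which is exactly the $\omega$-jump axiom of $\ACA_0^+$. The only delicate use of induction is the \qt{no mind change} step, for which arithmetic induction suffices, and it is available since $\RT^{!\omega}_2 \to \RT^3_2 \to \ACA_0$ over $\RCA_0$. Conversely, $\ACA_0^+ \to \RT^{!\omega}_2$ by formalising (1): $\ACA_0^+$ proves $\ACA_0'$ (each $X^{(n)}$ is definable from $X^{(\omega)}$), hence $\forall n\, \RT^n_k$, which drives the stages; and the $\omega$-jump $f^{(\omega)}$, granted by $\ACA_0^+$, absorbs the unboundedly-many-but-finite jumps used, so the sequences $(R_i)_i, (h_i)_i, (c_i)_i$ and then a monochromatic set are obtained by arithmetic comprehension relative to $f^{(\omega)}$.

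I expect the main obstacle to be the colouring in (2): one must fit enough \qt{adjacent nested configurations} into the $n$ coordinates of an exactly $\omega$-large set of minimum $n$ to detect mind changes at all $n$ nesting levels simultaneously, while preserving soundness through the nesting --- this is the technical heart of Carlucci--Zdanowski's argument, the rest being essentially bookkeeping. Note that $\RT^{!\omega}_2$ here has fixed domain $\NN$, which is all (2) needs and which coincides over $\RCA_0$ with the domain-varying versions.
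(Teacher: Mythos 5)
First, a caveat: the paper does not prove this theorem at all — it is imported by citation from Carlucci--Zdanowski \cite{carlucci2014strength} (with the upper bound also attributed to Clote in a more general form), so the comparison below is with that cited argument rather than with an in-paper proof. Your part (1) is essentially the standard (and the cited) approach: iterate $\RT^{h_i}_k$ along shrinking reservoirs, using that the minimum of an exactly $\omega$-large set determines its size, then thin to one colour non-uniformly; the $\emptyset^{(\omega)}$ bound goes through, modulo the small caveat that Jockusch's bound is uniform only after allowing an extra jump or two per stage to identify a colour and an index (harmless here). Part (3) is indeed routine relativization/formalization of (1) and (2), and your observation that $\RT^{!\omega}_2$ yields $\RT^3_2$, hence $\ACA_0$ and arithmetical induction, handles the induction issues.

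The genuine gap is in part (2), which is the heart of the theorem. You never actually define the colouring: ``record whether a mind change occurs, at some nesting level and some query below $n$, across the configurations coded by $a_1,\dots,a_n$'' is a description of an intended shape, not a construction, and the decisive inference ``no mind change ever occurs along $H$, so by soundness the approximations are already correct along $H$'' is precisely what has to be proved and is not justified by anything in the sketch. For a single limit this is Jockusch's argument (for any $x<y$ in $H$ one compares against a $z\in H$ beyond the true settling time), but $\emptyset^{(n)}$ for $n\geq 2$ is not limit-computable (Shoenfield), so genuinely nested limits are needed, and then the settling threshold at an inner level depends on the values of the outer stage parameters. One must therefore (i) specify the nested approximation and decide exactly which comparisons a $2$-colouring of an $(n{+}1)$-element set can record with only $n$ stage coordinates — in the known arguments this is a shifted comparison of the tuple $(a_1,\dots,a_{n-1})$ against $(a_2,\dots,a_n)$ rather than ``two stages per level'', since the naive count does not leave room for the latter — and (ii) prove by induction on the nesting depth that homogeneity for the ``stable'' colour forces correctness of the approximations evaluated at elements of $H$, using at each level the existence of arbitrarily large elements of $H$ beyond the thresholds determined by the (already verified) outer parameters. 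You explicitly defer exactly this (``the technical heart of Carlucci--Zdanowski's argument''), so the proposal identifies the right strategy but does not contain a proof of (2); since (3)'s forward direction rests on a formalized, relativized version of that same construction, the gap propagates there as well.
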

\begin{proof}
Carlucci and Zdanowski \cite{carlucci2014strength} proved the results for the general version of $\RT^{!\omega}$. The lower bounds in points 2.~and 3.~for our weaker formulation follow from Theorem 4 in \cite{car-main-zda2024} as well as from Corollary \ref{cor:RTtoFS}, Corollary \ref{fs-omega-implies-acaplus} and Corollary \ref{fs-omega-computes-omega-jump} of the present paper. Clote~\cite{clote1986generalization} contains a proof of point 1.~for a larger family of colorings and point 2.~for colorings of a family closely related to $[\NN]^{!\omega}$ (see Theorem~\ref{thm:Clote} below).
\end{proof}

It is quite natural to ask if the natural generalizations of the Free Set, Thin Set and Rainbow Ramsey theorems to colorings of exactly~$\omega$-large sets hold. 
The following is the natural generalization of the Free Set Theorem to colorings of exactly $\omega$-large sets.

\begin{definition}[Large Free Set Theorem]\label{def:large_free}
For every coloring $f:[\NN]^{!\omega} \to \NN$ there exists an infinite set $H\subseteq\NN$ such that for every set $s \in [H]^{!\omega}$, $f(s) \notin (H \setminus s)$. The set $H$ is called \emph{free} for $f$. We abbreviate this statement by $\FS^{!\omega}$. 
\end{definition}


There exists a direct combinatorial proof of $\FS^{!\omega}$, as for the proof of $\RT^{!\omega}_k$, involving countable applications of $\RT^n_k$ and $\FS^n$ for $n \in \NN^+$ and a final application of $\FS^1$. 
A computability-theoretic analysis of this proof yields a solution computable in the $\omega$-jump of the instance.
We rather establish $\FS^{!\omega}$ by reduction to $\RT^{!\omega}$. The proof combines ideas from the proof of $\FS^n$ from $\RT^n_{2n+2}$ (Theorem 5.2 and Corollary 5.3 in \cite{Cholak_Giusto_Hirst_Jockusch_2005}) and from the proof of Theorem 4.1 in \cite{carlucci2014strength}.

\begin{theorem}\label{thm:fs_red_rt}
    $\FS^{!\omega} \leq_{\sW} \RT^{!\omega}_2$.
\end{theorem}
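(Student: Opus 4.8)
The plan is to mimic the classical strong-Weihrauch reduction $\FS^n\leq_{\sW}\RT^n_{2n+2}$ from Cholak–Giusto–Hirst–Jockusch, adapting the ``cohesiveness/stability'' trick of Theorem 4.1 in \cite{carlucci2014strength} to the unbounded-dimension setting. Given an instance $f:[\NN]^{!\omega}\to\NN$ of $\FS^{!\omega}$, the idea is to push the information about $f$ down into a \emph{finite} colouring of $[\NN]^{!\omega}$ whose homogeneous set $H$ we then post-process (via $\Psi$) into a free set. For an exactly $\omega$-large $s$, the value $f(s)$ is a single natural number, and the only thing that matters for freeness is the position of $f(s)$ relative to $s$: either $f(s)<\min s$, or $f(s)$ lies strictly between two consecutive elements of $s$, or $f(s)>\max s$, or $f(s)\in s$. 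Abstractly there are $|s|+2=\min(s)+3$ ``position classes'', which is not a fixed finite number — this is exactly the obstacle the Large Ramsey framework is built to handle. The trick (as in \cite{carlucci2014strength}) is to define a colouring $g:[\NN]^{!\omega}\to 2$ that records, for each $s$, a \emph{single bit} chosen so that on a monochromatic set every $s$ has its $f$-value in a ``safely removable'' location. Concretely I would set $g(s)=1$ if $f(s)>\max s$ or $f(s)\in s$ (``$f(s)$ is $s$-harmless''), and $g(s)=0$ otherwise, i.e.\ if $f(s)\le\max s$ and $f(s)\notin s$.

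The key step is then the combinatorial lemma: for any infinite $H$ monochromatic for $g$, one of the two colours forces freeness after a mild thinning. If $H$ is homogeneous with colour $1$, then for every $s\in[H]^{!\omega}$ we have $f(s)>\max s$ or $f(s)\in s$; in the first case $f(s)\notin H\setminus s$ is not automatic, so here I would instead run the standard \emph{thinning-out} argument: pass to a subset $H'\subseteq H$ by a greedy construction that at stage $i$ adds the least available element $a$ and then, looking at the finitely many exactly $\omega$-large sets $s\subseteq H'\cup\{a\}$ with $\max s=a$, discards from the remaining reservoir all elements that equal some such $f(s)$ — a finite injury at each step since there are only finitely many such $s$, so $H'$ stays infinite. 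For colour $0$, where $f(s)\le\max s$ always, the colouring $g$ was designed so that $f(s)$ is ``local'' and we can apply an $\FS^n$-style argument dimension-by-dimension; but to stay inside a single application of $\RT^{!\omega}_2$ I would rather avoid the colour-$0$ case altogether by refining $g$ — this is where I expect the real work to be, and where I'd lean on the precise construction in \cite{carlucci2014strength}, Theorem 4.1.

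So the honest skeleton is: (i) define $\Phi$ producing $g:[\NN]^{!\omega}\to 2$ encoding enough positional information about $f$ (possibly using a few bits rather than one, but boundedly many, then folding $2^c$ colours back into $2$ by the equivalence $\RT^{!\omega}_2\equiv\RT^{!\omega}_k$ over the reduction — which needs checking at the $\leq_{\sW}$ level, or one just works with $\RT^{!\omega}_{k}$ and notes it strong-Weihrauch reduces to $\RT^{!\omega}_2$); (ii) given a $g$-homogeneous $H$, define $\Psi(H)$ by the greedy finite-injury thinning sketched above, which uses only $H$ (and $\Phi$'s description of $f$, hence $f$ itself — legitimate since $\Psi$ may be a functional in the solution, and $f=X$ is the outer instance, so really $\Psi$ has access to $X$) to extract an infinite subset $\widehat H$; (iii) verify $\widehat H$ is free, i.e.\ for every $s\in[\widehat H]^{!\omega}$, $f(s)\notin\widehat H\setminus s$, using homogeneity of $H$ to control the ``position class'' and the thinning to kill the dangerous values.

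The main obstacle, as flagged, is step (i)/(iii) together: a single exactly $\omega$-large set $s$ has $|s|+2$ many position classes for $f(s)$, an unbounded quantity, so one cannot naively reduce to $\RT^{!\omega}_c$ for fixed $c$ by colouring $s$ with the position of $f(s)$. The resolution — the crux of the proof, borrowed from \cite{carlucci2014strength} and the $\FS^n$-from-$\RT^n_{2n+2}$ argument — is that on a homogeneous set the relevant positional data becomes coarse enough (roughly: ``is $f(s)$ below, inside, or above $s$'') that boundedly many colours suffice, and the remaining freeness is bought by the finitary thinning in $\Psi$. Writing $\Phi$ and $\Psi$ so that they are genuine Turing functionals with the reduction going through exactly (no appeal to homogeneity beyond what the hypothesis gives) is the delicate bookkeeping I'd spend most of the proof on.
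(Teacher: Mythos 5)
There is a genuine gap, in fact two. First, the combinatorial heart of the theorem is precisely the case you set aside: when $H$ is homogeneous for your colour $0$, every $s\in[H]^{!\omega}$ has $f(s)\le\max s$ with $f(s)\notin s$, and such a value may sit below $\min s$ or in a gap of $s$, i.e.\ it may already be an element of $H$ that no posterior thinning can remove (you would have to delete unboundedly many small elements, with no control that an infinite set survives). Your proposal acknowledges this ("I would rather avoid the colour-$0$ case altogether by refining $g$ --- this is where I expect the real work to be") and defers to the construction you have not seen; but that refinement \emph{is} the proof. The paper's $g$ is not a positional bit: it is defined by induction along the lexicographic well-order (of type $\omega^\omega$) on $[\NN]^{!\omega}$, using a shifted tuple $s\ominus 1$ that reserves the last element $s_{s_0}$ as a free parameter, and, whenever $f(s\ominus 1)$ lands below $s$ or in an interior gap, setting $g(s)$ to be the \emph{flip} of the colour of the tuple obtained by substituting $f(s\ominus 1)+1$ into $s$. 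Homogeneity then directly forbids $f(s\ominus 1)$ from being an element of the (shifted) homogeneous set outside $s$, and the spare last coordinate handles the case $f(s\ominus 1)\ge s_{s_0}$. Your single-bit (or boundedly-many-bit) colouring carries no such self-referential information, so the colour-$0$ case genuinely fails with your $\Phi$; no choice of "a few positional bits" fixes this, which is why the recursive definition is needed.

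Second, your backward functional is not admissible for the claimed reduction. You let $\Psi$ run a greedy thinning that evaluates $f$ on subsets of $H$, and you assert this is "legitimate since \dots\ $\Psi$ has access to $X$". It does not: in a strong Weihrauch reduction $\Psi$ receives only the solution $Y$ of the transformed instance, not the original instance $X=f$ (access to $X\oplus Y$ is what ordinary Weihrauch reducibility grants). Your argument would at best give $\FS^{!\omega}\le_{\W}\RT^{!\omega}_2$ for the colour-$1$ half. In the paper's proof $\Psi$ is simply the shift $H\mapsto\{x-1: x\in H\}$, computable from $H$ alone, which is what makes the reduction strong; any repair of your approach must likewise avoid consulting $f$ after the homogeneous set is produced.
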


\begin{proof}
For $s = \{s_0, \dots, s_{s_0}\}$ an exactly $\omega$-large set with $s_0 > 0$, and $s_0 < \dots < s_{s_0}$ define $s \ominus 1$ to be the following exactly $\omega$-large set: $\{s_0 - 1,s_1 - 1, \dots, s_{s_0 - 1} - 1\}$. The idea is to keep one degree of freedom: $s_{s_0}$, as in the proof of Carlucci and Zdanowski~\cite[Proposition 4.1]{carlucci2014strength}.

Let $f : [\NN]^{!\omega} \to \NN$ be an instance of $\FS^{!\omega}$. Consider the following function $g : [\NN]^{!\omega} \to 2$ defined by induction. Note that the function $g$ is called recursively on  lexicographically smaller parameters, so the induction is well-defined since the lexicographic order is a well-order. For $s = \{s_0, \dots, s_{s_0}\}$ an exactly $\omega$-large set:


\begin{equation*}
    g(s) = \begin{cases}
           0 \hspace{1.5cm} \text{if } f(s \ominus 1) = s_i - 1 \textit{ for some } i < s_0 & \\
           1 - g(f(s \ominus 1) + 1, s_1, \dots, s_{f(s \ominus 1) + 1}) & \\ 
            \hspace{1.7cm} \text{if } f(s \ominus 1) < s_0 - 1  \\
           1 - g(s_0, \dots, s_i, f(s \ominus 1) + 1, s_{i+2}, \dots, s_{s_0}) & \\
            \hspace{1.7cm} \text{if } f(s \ominus 1) \in (s_i - 1, s_{i+1} - 1) \textit{ for some } i < s_0 - 1  \\
           0 \hspace{1.5cm} \text{if } f(s \ominus 1) \in (s_{s_0 - 1} - 1, s_{s_0} - 1) \\
           1 \hspace{1.5cm}  \text{otherwise } (\textit{if } f(s \ominus 1) \geq s_{s_0} ) \\      
           \end{cases} \quad
\end{equation*}

Let $H = \{x_0, x_1, \dots \}\subseteq \NN^+$ 
be an infinite $g$-homogeneous set as given by $\RT^{!\omega}_2$. We claim that $H' = \{x_0 - 1, x_1 - 1, \dots \}$ is $f$-free. Consider some exactly $\omega$-large set $\{s_0 - 1, \dots, s_{s_0 - 1} - 1\} \subseteq H'$. Then $\{s_0, \dots, s_{s_0 - 1}\} \subseteq H$. There are two cases:

\textbf{Case 1:} $H$ is homogeneous for the color $0$. Then, take $s_{s_0}$ to be the next element of $H$ after $s_{s_0 - 1}$ and write $s = \{s_0, \dots, s_{s_0}\}$, then $g(s) = 0$. There are four subcases:

\textbf{Subcase 1.1:} $f(s \ominus 1) = s_i - 1$ for some $i < s_0$. In that case we are done.

\textbf{Subcase 1.2:} $f(s \ominus 1) \in (s_{s_0 - 1} - 1, s_{s_0} - 1)$. In that case, by definition of $s_{s_0}$, $f(s \ominus 1)$ is not in $H'$.

\textbf{Subcase 1.3:} $f(s \ominus 1) < s_0 - 1$ and $g(f(s \ominus 1) + 1, s_1, \dots, s_{f(s \ominus 1) + 1}) = 1$. If $f(s \ominus 1) \in H'$ this contradicts the fact that $H$ is $g$-homogeneous for the color $0$. 

\textbf{Subcase 1.4:} $f(s \ominus 1) \in (s_i - 1, s_{i+1} - 1)$ for some $i < s_0 - 1$ and $g(s_0, \dots, s_i, f(s \ominus 1) + 1 ,s_{i+2}, \dots, s_{s_0}) = 1$. If $f(s \ominus 1) \in H'$ this contradicts the fact that $H$ is $g$-homogeneous for the color $0$. 

\textbf{Case 2:} $H$ is homogeneous for the color $1$. Take $s_{s_0} \in H$ bigger than $s_{s_0 - 1}$ and write $s = \{s_0, \dots, s_{s_0}\}$, then $g(s) = 1$. There are three subcases:

\textbf{Subcase 2.1:} $f(s \ominus 1) \geq s_{s_0}$. This case is impossible, indeed, as $H$ is infinite, there exists some element $x \in H$ such that $x > f(s \ominus 1) + 1$ and therefore $f(s \ominus 1) \in (s_{s_0} - 1, x - 1)$, which leads to $g(s_0, \dots, s_{s_0 - 1}, x) = 0$ contradicting the fact that $H$ is $g$-homogeneous for the color $1$.

\textbf{Subcase 2.2:} $f(s \ominus 1) < s_0 - 1$ and $g(f(s \ominus 1) + 1, s_1, \dots, s_{f(s \ominus 1) + 1}) = 0$. If $f(s \ominus 1) \in H'$ this contradicts the fact that $H$ is $g$-homogeneous for the color $1$. 

\textbf{Subcase 2.3:}  $f(s \ominus 1) \in (s_i - 1, s_{i+1} - 1)$ for some $i < s_0 - 1$ and $g(s_0, \dots, s_i, f(s \ominus 1) + 1 ,s_{i+2}, \dots, s_{s_0}) = 0$. If $f(s \ominus 1) \in H'$ this contradicts the fact that $H$ is $g$-homogeneous for the color $1$.  \\

Notice that $g$ is uniformly computable in $f$ and that $H'$ is uniformly computable in $H$. Therefore, $\FS^{!\omega} \leq_{\sW} \RT^{!\omega}_2$.

\end{proof}

The above proof uses the fact that the exactly~$\omega$-large sets are well-ordered under lexicographic ordering. Since the order type of this ordering is $\omega^\omega$ and the statement \qt{$\omega^\omega$ is well-ordered} implies the consistency of~$\RCA_0$ (see \cite{gentzen1967wiederspruchsfreiheit,hajek1998metamathematics}), the above proof is not formalizable in $\RCA_0$. Yet, since $\RT^{!\omega}_2$ implies $\ACA_0$ and the latter proves that $\omega^\omega$ is well-ordered we obtain the following corollary.

\begin{corollary}\label{cor:RTtoFS}
    $\RCA_0 \vdash \RT^{!\omega}_2 \to \FS^{!\omega}$.
\end{corollary}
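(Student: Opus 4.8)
The plan is to reuse the construction from the proof of Theorem~\ref{thm:fs_red_rt} verbatim, and to check that, although that construction is not available in $\RCA_0$, it does go through in $\ACA_0$ --- which suffices, since $\RT^{!\omega}_2$ implies $\ACA_0$ over $\RCA_0$ by Theorem~\ref{thm:car-zda}(3). So I would argue in $\RCA_0$ assuming $\RT^{!\omega}_2$, immediately pass to $\ACA_0$, and then run the old argument inside $\ACA_0$ while feeding it the instance of $\RT^{!\omega}_2$ we are given.

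In $\ACA_0$ the ordinal $\omega^\omega$ is well-ordered, so the lexicographic order $<_{\lex}$ on $[\NN]^{!\omega}$, being of order type $\omega^\omega$, is well-founded. Given an instance $f:[\NN]^{!\omega}\to\NN$ of $\FS^{!\omega}$, I would form the auxiliary coloring $g:[\NN]^{!\omega}\to 2$ defined by the six clauses of Theorem~\ref{thm:fs_red_rt}. The key point to verify is that $g$ exists as a set together with its defining equations, since arithmetical transfinite recursion along $\omega^\omega$ is not in general available in $\ACA_0$. Here one exploits the special shape of the recursion: the value $g(s)$ refers to at most one other value $g(s')$, and in each recursive clause the tuple $s'$ is obtained from $s$ by strictly decreasing one coordinate while leaving all earlier coordinates unchanged, hence $s' <_{\lex} s$. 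Therefore the (manifestly $f$-computable) partial step map $s \mapsto s'$ produces, from any starting tuple, a strictly $<_{\lex}$-descending sequence, which by well-foundedness must terminate after finitely many steps in a tuple governed by one of the two non-recursive clauses; this terminating ``computation sequence'' is unique by determinism of the clauses, and $g(s)$ is read off from its base value together with the parity of the number of $1-(\cdot)$ flips along it. Thus $g$ is arithmetically definable in $f$ (in fact ``$g(s)=j$'' is both $\Sigma^0_1(f)$ and $\Pi^0_1(f)$ once existence and uniqueness of computation sequences have been secured by well-foundedness), so $g$ exists in $\ACA_0$ and satisfies its recursive equations.

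Finally I would apply $\RT^{!\omega}_2$ to $g$ to obtain an infinite $g$-homogeneous set $H$, put $H' = \{x-1 : x\in H\}$, and show $H'$ is free for $f$. This is exactly the case/subcase analysis of Theorem~\ref{thm:fs_red_rt}: it invokes only the defining equations of $g$, the homogeneity and infinitude of $H$, and elementary manipulations of exactly $\omega$-large sets, all of which is available in $\RCA_0$. This produces an infinite free set for $f$, completing the derivation $\RCA_0\vdash \RT^{!\omega}_2\to\FS^{!\omega}$ (via the same Turing functionals as in Theorem~\ref{thm:fs_red_rt}). I expect the only genuine obstacle to be the middle step --- legitimizing the transfinite-recursion definition of $g$ from the mere well-orderedness of $\omega^\omega$ rather than from $\ATR_0$ --- and the observation that each value of $g$ is computed by a single finite $<_{\lex}$-descending chain is precisely what resolves it.
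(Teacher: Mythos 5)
Your proposal is correct and is essentially the paper's own argument: the proof notes that the construction of Theorem~\ref{thm:fs_red_rt} only needs the well-orderedness of $\omega^\omega$ (for the lexicographic recursion defining $g$), which fails in $\RCA_0$ but is provable in $\ACA_0$, and $\RT^{!\omega}_2$ implies $\ACA_0$. Your additional observation that each value $g(s)$ is determined by a unique finite $<_{\lex}$-descending computation chain, making $g$ $\Delta^0_1(f)$-definable once termination is secured, just makes explicit the formalization the paper leaves implicit.
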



We next introduce the generalization of the Thin Set Theorem to colorings of exactly $\omega$-large sets. For the fixed-dimension case, Cholak et al.~\cite{Cholak_Giusto_Hirst_Jockusch_2005} have proved in $\RCA_0$ that for all $k\geq 2$, $\FS^k$ implies $\TS^k$ 
(see Theorem 3.2 in~\cite{Cholak_Giusto_Hirst_Jockusch_2005}; the proof yields a strong Weihrauch reduction). A completely analogous argument establishes that the Thin Set Theorem follows from (and is reducible to) the Free Set Theorem for colorings of exactly $\omega$-large sets. 

\begin{definition}[Large Thin Set Theorem]\label{thm:thin}
For every coloring $f:[\NN]^{!\omega} \to \NN$, there exists an infinite set $H\subseteq \NN$ such that $f([H]^{!\omega}) \neq \NN$. The set $H$ is called \emph{thin} for $f$. We abbreviate this statement by $\TS^{!\omega}$.
\end{definition}

\begin{theorem}\label{thm:thin-sw-fs-omega}
$\TS^{!\omega} \leq_{\sW} \FS^{!\omega}$ and $\RCA_0 \vdash \FS^{!\omega}\to \TS^{!\omega}.$
\end{theorem}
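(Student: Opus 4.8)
The plan is to adapt the finite-dimensional argument from Theorem 3.2 of \cite{Cholak_Giusto_Hirst_Jockusch_2005}, which has already been shown in \Cref{prop:ts_red_fs} to work at the level of all finite sets; the exactly $\omega$-large case requires only minor care because removing elements from a set no longer preserves being exactly $\omega$-large, but removing elements that are \emph{not the minimum} is still harmless in the sense that we only ever need the exactly $\omega$-large subsets of a sub-infinite-set. First I would set up the strong Weihrauch reduction: given an instance $f : [\NN]^{!\omega} \to \NN$ of $\TS^{!\omega}$, feed the very same $f$ to $\FS^{!\omega}$ (so $\Phi$ is the identity), obtain an infinite free set $A$ for $f$, and let $\Psi$ output $A' = A \setminus \{a\}$ where $a$ is, say, the least element of $A$ (any fixed element works). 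Both functionals are clearly computable, indeed trivial, so the reduction is strong.

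The key claim is then that $A' = A \setminus \{a\}$ is thin for $f$ with witness color $a$, i.e. $a \notin f([A']^{!\omega})$. Suppose not: then there is some $s \in [A']^{!\omega}$ with $f(s) = a$. Since $A' \subseteq A$, we have $s \in [A]^{!\omega}$, and $a \in A$; by freeness of $A$, $f(s) = a \in A$ forces $a \in s$. But $s \subseteq A' = A \setminus \{a\}$, so $a \notin s$ — contradiction. Hence $f([A']^{!\omega}) \subseteq \NN \setminus \{a\} \neq \NN$, so $A'$ is thin. This proves $\TS^{!\omega} \leq_{\sW} \FS^{!\omega}$.

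For the $\RCA_0$ implication, essentially the same argument works, but I would first note that we must be able to pick an element $a \in A$ and form $A \setminus \{a\}$ as a set, which is unproblematic ($\Delta^0_1$-comprehension from $A$ and a parameter). One subtlety worth a sentence: to make the argument go through we only need that $A \setminus \{a\}$ is infinite, which holds since $A$ is infinite; and the whole deduction above ("if $f(s) = a$ for some $s \in [A']^{!\omega}$ then $a \in s$, contradiction") is arithmetical and uses no induction beyond what $\RCA_0$ provides. So $\RCA_0 \vdash \FS^{!\omega} \to \TS^{!\omega}$.

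I do not expect a serious obstacle here — this is the "easy direction" mirroring \Cref{prop:ts_red_fs}. The only thing to be slightly careful about, and the place a careless proof could go wrong, is the interaction between "deleting an element" and "being exactly $\omega$-large": one might be tempted to argue via the characterization "$H$ is free iff for every $x \in H$, $H \setminus \{x\}$ is thin with witness $x$" (stated in the finite-dimensional discussion), but that characterization as literally phrased refers to $[H \setminus \{x\}]^{!\omega}$, which is fine, so no real issue arises. I would just present the direct contradiction argument above, which sidesteps any worry about whether $s \ominus 1$-style manipulations preserve exact $\omega$-largeness, since here we never shift or shrink the sets $s$ themselves, only the ambient infinite set.
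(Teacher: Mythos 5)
Your proof is correct and takes essentially the same route as the paper, which proves this result by the argument of \Cref{prop:ts_red_fs}: obtain a free set $A$ for $f$ itself, delete a nonempty piece $B$ (you take the singleton $B=\{\min A\}$) leaving an infinite set, and observe that any $s\in[A\setminus B]^{!\omega}$ with $f(s)\in B\subseteq A$ would by freeness have to contain an element of $B$, a contradiction. Your remarks on why deleting elements of the ambient set causes no trouble with exact $\omega$-largeness, and on formalizability in $\RCA_0$, are accurate.
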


\begin{proof} Completely analogous to the proof of Proposition \ref{prop:ts_red_fs}.
\end{proof}


The following proposition states that requiring that one color is omitted is equivalent to requiring that infinitely many colors are omitted. The result is the analogue of Theorem 3.5 in~\cite{Cholak_Giusto_Hirst_Jockusch_2005} and can be proved by exactly the same proof.

\begin{proposition}
    For every coloring $f:[\NN]^{!\omega} \to \NN$ there exists an infinite set $X\subseteq\NN$ such that $\NN\setminus f([X]^{!\omega})$ is infinite. Moreover, the just stated principle is strongly Weihrauch-equivalent to $\TS^{!\omega}$ and provably equivalent to the latter over $\RCA_0$.
\end{proposition}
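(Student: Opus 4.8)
The plan is to establish the two strong Weihrauch reductions between the principle in the statement — write $\Psf$ for it — and $\TS^{!\omega}$, and to check that each argument formalizes over $\RCA_0$. The existence assertion of the first sentence then comes for free: $\TS^{!\omega}$ is a theorem by \Cref{thm:thin-sw-fs-omega} and \Cref{thm:fs_red_rt} together with the truth of the Large Ramsey Theorem $\RT^{!\omega}_2$ (\cite{Pud-Rod:82,Far-Neg:08}), so any reduction $\Psf\leq_{\sW}\TS^{!\omega}$ already yields that every instance of $\Psf$ has a solution. One of the two directions is immediate: if $X$ is infinite and $\NN\setminus f([X]^{!\omega})$ is infinite, then it is in particular nonempty, so $f([X]^{!\omega})\neq\NN$; hence the pair of identity functionals witnesses $\TS^{!\omega}\leq_{\sW}\Psf$, and the same one-line remark gives $\RCA_0\vdash\Psf\to\TS^{!\omega}$.

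For the reverse direction I would reuse the pairing trick from the proof of Theorem~3.5 in \cite{Cholak_Giusto_Hirst_Jockusch_2005}. Fix a computable bijection $\pi:\NN\to\NN\times\NN$ and write $\pi(n)=(\pi_0(n),\pi_1(n))$. Given an instance $f:[\NN]^{!\omega}\to\NN$ of $\Psf$, define $g:[\NN]^{!\omega}\to\NN$ by $g(s)=\pi_0(f(s))$; this $g$ is uniformly computable in $f$ and is obtained by $\Delta^0_1$-comprehension over $\RCA_0$. Apply $\TS^{!\omega}$ to $g$ to get an infinite $H\subseteq\NN$ and a color $c$ with $c\notin g([H]^{!\omega})$. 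The set $C_c=\{\,n\in\NN:\pi_0(n)=c\,\}$ is infinite, since the elements $\pi^{-1}(c,m)$ for $m\in\NN$ are pairwise distinct, hence $C_c$ is unbounded. For every $s\in[H]^{!\omega}$ we have $\pi_0(f(s))=g(s)\neq c$, i.e.\ $f(s)\notin C_c$; therefore $C_c\subseteq\NN\setminus f([H]^{!\omega})$, so $\NN\setminus f([H]^{!\omega})$ is infinite and $H$ is a solution to $\Psf$ for $f$. Mapping $f\mapsto g$ and then any solution $H$ to itself gives $\Psf\leq_{\sW}\TS^{!\omega}$, and, every step being available in $\RCA_0$, also $\RCA_0\vdash\TS^{!\omega}\to\Psf$.

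I do not expect a genuine obstacle here: the argument is a verbatim transfer of the fixed-dimension case, and replacing $[\NN]^n$ by $[\NN]^{!\omega}$ changes nothing, since $g$ is built by post-composing $f$ with a fixed computable map and the combinatorial content lives entirely in the choice of $\pi$. The only point warranting a word of care is the $\RCA_0$-formalization of ``$C_c$ is infinite'', which reduces to the standard fact — provable from finite pigeonhole in $\RCA_0$ — that an injection $\NN\to\NN$ has unbounded range, applied to $m\mapsto\pi^{-1}(c,m)$, together with the observation that $C_c$ is a $\Delta^0_1$ (indeed primitive recursive) subset of $\NN$.
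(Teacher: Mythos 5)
Your proof is correct and follows essentially the same route as the paper, which simply invokes the argument of Theorem~3.5 of Cholak--Giusto--Hirst--Jockusch: collapse the colors of $f$ along a partition of $\NN$ into infinitely many infinite classes (the paper's version uses prime powers where you use the fibers of $\pi_0$, a cosmetic difference), apply $\TS^{!\omega}$, and observe that the avoided class gives infinitely many avoided $f$-colors; the easy converse via identity functionals and the $\RCA_0$ bookkeeping are as you state.
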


\begin{proof}
See proof of Theorem~3.5 in~\cite{Cholak_Giusto_Hirst_Jockusch_2005}.
\end{proof}

We now turn to the generalization of the Rainbow Ramsey Theorem. 

\begin{definition}[Large Rainbow Ramsey Theorem]
Let $k\in \NN^+$. For every $k$-bounded coloring $f:[\NN]^{!\omega} \to \NN$ there exists an infinite set $H\subseteq \NN$ such that $f$ is injective on $[H]^{!\omega}$. The set~$H$ is called a \emph{rainbow} for~$f$. We abbreviate this statement by $\RRT^{!\omega}_k$.
\end{definition}

For the fixed dimension case, Galvin gave a reduction to Ramsey's theorem. His argument is formalizable in $\RCA_0$ and yields that for each $n, k\in\NN$, $\RRT^n_k \leq_\sW \RT^n_k$ (see \cite{CsimaMileti_2009}, proof of Theorem 5.2). Csima and Mileti also showed that for each $n,k\in\NN^+$, $\RRT^n_k$ follows from $\RRT^{n+1}_k$ over $\RCA_0$ (the proof of Theorem 5.3 in \cite{CsimaMileti_2009} yields a strong Weihrauch reduction).

We first observe that Galvin's argument adapts to the case of colorings of exactly $\omega$-large sets and establishes a strong Weihrauch reduction.

\begin{theorem}\label{thm:largerrt}
For all $k\in\NN^+$, $\RRT^{!\omega}_k \leq_\sW \RT^{!\omega}_k$. Moreover $\RCA_0 \vdash \forall k (\RT^{!\omega}_k\to \RRT^{\omega}_k)$.
\end{theorem}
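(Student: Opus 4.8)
The plan is to adapt Galvin's classical reduction of the Rainbow Ramsey Theorem to Ramsey's theorem, working now with colorings of the Schreier barrier rather than of a fixed dimension. Let $f : [\NN]^{!\omega} \to \NN$ be a $k$-bounded coloring. The idea is to define an auxiliary coloring that records, for each exactly $\omega$-large set, whether it is the last among the (at most $k$) sets sharing its $f$-color, in a sense made precise below. Concretely, fix a natural way to enumerate, for each $n$, the members of $f^{-1}(n)$; since $f$ is $k$-bounded, each such fiber has size at most $k$. For an exactly $\omega$-large set $s$, define $h(s)$ to be the position of $s$ in the enumeration of the fiber $f^{-1}(f(s))$ if $s$ occurs among sets built from small enough elements, and otherwise a default value — the precise bookkeeping is exactly as in the proof of Theorem 5.2 of \cite{CsimaMileti_2009}, the only change being that "$n$-subset" is replaced throughout by "exactly $\omega$-large set" and $[\NN]^n$ by $[\NN]^{!\omega}$. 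This yields a coloring $h : [\NN]^{!\omega} \to k$ (or $k+1$; one merges the default value with one of the others), uniformly computable from $f$.

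Next I would apply $\RT^{!\omega}_k$ to $h$ to obtain an infinite $h$-monochromatic set $H$, with color $c$. The key step is then to argue that $H$ is a rainbow for $f$: if $s, t \in [H]^{!\omega}$ with $s \neq t$ but $f(s) = f(t)$, then $s$ and $t$ lie in the same $f$-fiber, which has at most $k$ elements; since both receive $h$-color $c$, and $h$ on a fiber is designed to take distinct values at distinct members of the fiber (at least for members built from sufficiently large elements, which is where the infinitude of $H$ is used to reduce to), we reach a contradiction. This is the same counting argument as in the fixed-dimension case; the point is that it never used anything about the arity of the sets being colored beyond the ability to speak of membership, subsets, and the coloring's fibers — all of which make equal sense for the Schreier barrier. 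Finally, since both $h$ is uniformly computable from $f$ and the rainbow $H$ is simply the monochromatic set (no post-processing functional needed, or a trivial one), this gives $\RRT^{!\omega}_k \leq_\sW \RT^{!\omega}_k$. The "moreover" clause follows because every step — the definition of $h$, the application of $\RT^{!\omega}_k$, and the verification — is carried out by arithmetical comprehension and $\Sigma^0_1$-induction available in $\RCA_0$ (one does not even need $\omega^\omega$ to be well-ordered here, unlike in the proof of Theorem \ref{thm:fs_red_rt}, since $h$ is defined directly rather than by transfinite recursion).

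The main obstacle I anticipate is making the definition of $h$ genuinely computable from $f$ while preserving the "distinct members of a fiber get distinct $h$-colors" property: enumerating the fiber $f^{-1}(n)$ is not a decidable operation uniformly in $n$, so one has to be careful to define $h(s)$ using only the finitely much information about $f$ available from inspecting $f$ on exactly $\omega$-large sets drawn from an appropriate initial segment determined by $s$ itself. The standard trick — letting $h(s)$ count how many $t <_{\lex} s$ (or $t$ contained in $\{0,\dots,\max s\}$) satisfy $f(t) = f(s)$, capped at $k$ — resolves this, and it is exactly the device used in \cite{CsimaMileti_2009}; I would check that it transfers verbatim, since the lexicographic order and the "contained in an initial segment" relation are just as available on $[\NN]^{!\omega}$ as on $[\NN]^n$. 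Once that is in place, the rest is routine.
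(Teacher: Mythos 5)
Your proposal follows essentially the same route as the paper: Galvin's counting trick, i.e.\ replace the $k$-bounded $f$ by the $k$-coloring that counts, for each $s \in [\NN]^{!\omega}$, the earlier members of the fiber $f^{-1}(f(s))$, apply $\RT^{!\omega}_k$, and observe that on a homogeneous set two distinct fiber-mates would get different counts. The one point where your concrete bookkeeping needs repair is the choice of ordering: it must be computable and of order type $\omega$. The paper takes a computable bijection $b : [\NN]^{!\omega} \to \NN$ and sets $g(s) = |\{t : b(t) < b(s),\, f(t) = f(s)\}|$, which is computable (only finitely many $t$ have smaller $b$-value) and strictly increases along a fiber. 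Your $<_\lex$ variant, taken literally, does not work on the Schreier barrier: any $s$ with $\min s \geq 1$ has infinitely many lexicographic predecessors (all $t$ with $\min t < \min s$), so the count is only limit-computable from $f$, which breaks both the strong Weihrauch uniformity and the $\RCA_0$ formalization; and the ``$t \subseteq \{0,\dots,\max s\}$'' variant alone cannot separate two fiber-mates with the same maximum. Combining them (bound by $\max$, tie-break by $<_\lex$), or simply using the bijection $b$, resolves this, so the defect is local and easily fixed. One wording slip in the ``moreover'' clause: $\RCA_0$ does not have arithmetical comprehension; what makes the argument go through is that $g$ is uniformly computable from $f$, so $\Delta^0_1$-comprehension suffices, and indeed no appeal to the well-orderedness of $\omega^\omega$ is needed, as you correctly note.
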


\begin{proof}
Let $f:[\NN]^{!\omega} \to \NN$ be $k$-bounded and fix a computable bijection $b(\cdot): [\NN]^{!\omega} \to \NN$.
Define $g:[\NN]^{!\omega} \to k$ as follows:
$$ g(s) = |\{ t \in [\NN]^{!\omega} \,:\, b(t) < b(s) \text{ and } f(s) = f(t)\}|.$$
The fact that $g$ is a $k$-coloring depends on the hypothesis that $f$ is $k$-bounded.
Let $H$ be an infinite set such that $g$ is constant on $[H]^{!\omega}$, as given by $\RT^{!\omega}_k$. 
Let $s, t \in [H]^{!\omega}$. Since $g(s) = g(t)$ and either $b(s) < b(t)$ or $b(t) < b(s)$
we have that $f(s) \neq f(t)$. Thus $H$ is rainbow for $f$. 
\end{proof}

The following is an adaptation of a result by Wang \cite{wang2014some}, who showed that for every $n$, $\RRT^n_2 \leq_\sW \FS^n$.

\begin{proposition}\label{prop:rrt-omega-2-sw-fs-omega}
$\RRT^{!\omega}_2 \leq_\sW \FS^{!\omega}$.
\end{proposition}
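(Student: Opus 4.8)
The plan is to mimic Wang's reduction $\RRT^n_2 \leq_\sW \FS^n$, translated to the Schreier barrier. Given a $2$-bounded coloring $f:[\NN]^{!\omega}\to\NN$, I want to build an instance $\hat f$ of $\FS^{!\omega}$ whose free sets are rainbows for $f$. The natural attempt is: fix a computable bijection $b(\cdot):[\NN]^{!\omega}\to\NN$ and, for an exactly $\omega$-large set $s$, look at whether $b^{-1}$ of some number closely associated with $s$ collides in $f$-color with $s$ itself. Concretely, when $f$ is $2$-bounded, for each exactly $\omega$-large $s$ there is \emph{at most one} other exactly $\omega$-large $t$ with $f(t)=f(s)$; the idea is to have $\hat f(s)$ "point to" the unique such $t$ (encoded via $b$) when it exists, and point harmlessly (e.g.\ to $b(s)$, or to something below $\min s$ / inside $s$) otherwise. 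A free set $H$ for $\hat f$ then cannot contain two exactly $\omega$-large sets $s,t\subseteq H$ with $f(s)=f(t)$: one of them would $\hat f$-point to the other, giving a value of $\hat f$ landing in $H\setminus s$.

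**Key steps, in order.** First, set up the bijection $b$ and, using $2$-boundedness, define for each exactly $\omega$-large $s$ the partial "twin" $\tau(s)$ = the unique $t\neq s$ in $[\NN]^{!\omega}$ with $f(t)=f(s)$ if it exists. Second, define $\hat f(s)$: the obstruction is that $\hat f(s)$ must be a \emph{number} and the free-set condition only bites when that number lies in $H\setminus s$ — but $\tau(s)$ is a \emph{set}, not a number, and $b(\tau(s))$ need not relate to the elements of $H$ at all. This is the real difficulty (see below). The fix, following Wang, is to encode the pointer not as $b(\tau(s))$ but as an actual \emph{element} of $\tau(s)$ that is guaranteed to lie in $H$ whenever the bad configuration occurs inside $H$ — e.g.\ $\min\tau(s)$, or more robustly an element of $\tau(s)$ outside $s$. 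Third, verify that if $s,t\subseteq H$ are exactly $\omega$-large with $f(s)=f(t)$ and $s\neq t$, then $t=\tau(s)$, and $\hat f(s)$ is an element of $t$ which (by choice) is not in $s$ but is in $H$, contradicting freeness of $H$. Fourth, observe the reduction is uniform: $\hat f\leq_T f\oplus b$ computably, and a rainbow for $f$ is recovered from a free set for $\hat f$ by the identity map (or by thinning to an infinite subset), establishing $\leq_\sW$.

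**The main obstacle** is exactly the type mismatch: $\FS^{!\omega}$ as stated in Definition~\ref{def:large_free} has $f:[\NN]^{!\omega}\to\NN$, so the pointer must be a single natural number, whereas "the other set of the same color" is a set of unbounded size. In the fixed-dimension case Wang exploits that an $n$-set has exactly $n$ elements and he can afford to spread information using auxiliary coordinates; here $s$ and $\tau(s)$ have sizes $1+\min s$ and $1+\min\tau(s)$, which differ, so one must be careful that the chosen element of $\tau(s)$ genuinely escapes $s$. I expect the cleanest route is: when $\tau(s)$ exists, let $\hat f(s)$ be the least element of $\tau(s)\setminus s$ (this is nonempty since $s\neq\tau(s)$ and... actually one must check $\tau(s)\not\subseteq s$, which holds because two distinct exactly $\omega$-large sets with $\min$ equal have the same size, and if $\min$ differs then neither contains the other — so $\tau(s)\setminus s\neq\emptyset$); when $\tau(s)$ does not exist, let $\hat f(s)=\min s$ (which is always in $s$, hence harmless for freeness). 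Then a free $H$ cannot host a monochromatic pair, so $H$ is a rainbow. The remaining bookkeeping — that $\hat f$ is $f$-computable and that the argument is formalizable enough for a $\leq_\sW$ reduction — is routine. (One should also double-check the degenerate cases $\min s = 0$, i.e.\ singletons, where "exactly $\omega$-large" forces $|s|=1$; these are handled by the same recipe since a singleton still has a well-defined twin behavior under a $2$-bounded $f$.)
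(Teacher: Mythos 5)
Your combinatorial core is exactly the paper's: point from $s$ to the least element of its monochromatic twin lying outside $s$, using the fact that two distinct exactly $\omega$-large sets are never contained in one another (so the twin always meets the complement of $s$), and conclude that a free set for the pointer coloring is a rainbow, with the identity as the backward functional. However, there is a genuine gap in the forward functional as you define it. Your $\hat f$ is built from the \emph{global} twin $\tau(s)$, i.e.\ the unique $t\neq s$ anywhere in $[\NN]^{!\omega}$ with $f(t)=f(s)$, and the case split \qt{$\tau(s)$ exists / does not exist} requires an unbounded search through $[\NN]^{!\omega}$: whether such a $t$ exists is a $\Sigma^0_1(f)$ question, not decidable from $f$. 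So $\hat f$ is not given by a Turing functional applied to $f$ (it is only $f'$-computable in general), and the claimed strong Weihrauch reduction fails as written; this is precisely the step you dismiss as \qt{routine bookkeeping}.

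The repair is the asymmetric device you gesture at with the bijection $b$ but never actually build into the definition, and it is what the paper does: let $\hat f(s)=\min(t\setminus s)$ only if there is a $t$ with $b(t)<b(s)$ and $f(t)=f(s)$ (a \emph{finite} search over $b^{-1}(\{0,\dots,b(s)-1\})$, hence uniformly $f$-computable, with $t$ unique by $2$-boundedness), and a harmless default value otherwise. The verification is unchanged: if $s,t\in[A]^{!\omega}$ are distinct with $f(s)=f(t)$, then whichever of the two has the larger $b$-value points to $\min$ of the other minus itself, which lies in $A\setminus s$ since the other set is contained in $A$ and is not a subset of $s$, contradicting freeness. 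With that single modification your argument coincides with the paper's proof; your choice of default $\min s$ instead of the paper's $0$ is immaterial.
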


\begin{proof}
Fix a computable bijection $b: [\NN]^{!\omega} \to \NN$. Let $f:[\NN]^{!\omega}\to\NN$
be $2$-bounded. Define $g:[\NN]^{!\omega} \to \NN$ as follows:
$$
g(s)=
\begin{cases}
\min(t\setminus s) & \mbox{ if there is a } t\in [\NN]^{!\omega} \text{ such that } b(t)<b(s) \text{ and } f(s)=f(t),\\
0 & \mbox{ otherwise.}
\end{cases}
$$

Since $f$ is $2$-bounded, if $t$ exists in the definition of $g$ then it is unique. If $t$ and $s$ are distinct
exactly $\omega$-large sets then $(t\setminus s)\neq \emptyset$, since $t\subseteq s$ is impossible. Let $A$ be 
an infinite $g$-free set. We claim that $A$ is a rainbow for $f$. Suppose otherwise, by way of contradiction, 
as witnessed by $s, t \in [A]^{!\omega}$ such that $f(s) = f(t)$. Without loss of generality we can assume
$b(t) < b(s)$. Then $g(s) = \min(t\setminus s) \in A \setminus s$, contradicting that $A$ is $g$-free.
\end{proof}


Wang~\cite{wang2014some} proved that the Free Set, Thin Set and Rainbow Ramsey theorems for fixed-sized sets satisfy cone-avoidance. This entails that none of these principles codes the halting set or implies $\ACA_0$.
A natural question is whether the same is true of their versions for exactly $\omega$-large sets.
We will show that $\FS^{!\omega}$ and $\TS^{!\omega}$ code $\emptyset^{(\omega)}$ and imply the much stronger system $\ACA_0^+$, while $\RRT^{!\omega}$ admits cone avoidance.

\bigskip

Some first lower bounds on our principles can be obtained by adapting results from the finite-dimensional case. For example, the Rainbow Ramsey Theorem with internal quantification over all fixed finite dimensions $\forall n \RRT^n_2$ follows from the Rainbow Ramsey Theorem for exactly $\omega$-large sets. 
\begin{proposition}
For each $k, n\in\NN^+$, $\RRT^{!\omega}_k \geq_\sW \RRT^n_k$.
Moreover, $\RCA_0 \vdash \forall k (\RRT^{!\omega}_k \to \forall n \RRT^n_k)$.\footnote{The published proof \cite{carlucci2025ramsey} of that result is flawed; the present version corrects it.}
\end{proposition}

\begin{proof}
Let $n > 0$ and let $f:[\NN]^n \to \NN$ be $k$-bounded. Define $g:[\NN]^{!\omega}\to \NN$ as follows. For an exactly $\omega$-large set $s = \{s_0, \dots, s_{s_0}\}$,

$$
g(s)=
\begin{cases}
\langle 0, s_0, f(s_1, \dots, s_n), s_{n+1}, \dots, s_{s_0} \rangle & \mbox{ if } s_0 \geq n,\\
\langle 1, s \rangle & \mbox{ is } s_0 < n,
\end{cases}
$$
where $\langle \cdot \rangle : \NN^{<\omega} \to \NN$ is a fixed computable bijection.

\textbf{Claim: $g$ is $k$-bounded.} Let $s = \{s_0, \dots, s_{s_0}\}$ be exactly $\omega$-large. If $s_0 < n$ then there are no other exactly $\omega$-large set $t$ such that $g(s) = g(t)$. If $s_0 \geq n$, suppose that some exactly $\omega$-large set $t = \{t_0, \dots, t_{t_0}\}$ satisfy $g(s) = g(t)$. Then, necessarily $t_0 = s_0$, $s_i = t_i$ for $i > n$, and $f(s_1, \dots, s_n) = f(t_1, \dots, t_n)$, hence there are at most $k$ such $t$ by $k$-boundedness of $f$. \\

Now, let $A$ be an infinite rainbow for $g$ and define $B = A \cap [n, \infty)$, then $B \setminus \{\min B\}$ is an infinite rainbow for $f$. Indeed, let $s,t \in [B \setminus \{\min B\}]^n$ such that $f(s) = f(t)$. Since $A$ is infinite, we can choose elements $x_{n+1} < \dots < x_{\min B}$ in $A$ such that $x_{n+1} > \max \{\max s, \max t\}$. Define $s' = \{\min B\} \cup s \cup \{x_{n+1}, \dots, x_{\min B}\}$ and $t' = \{\min B\} \cup t \cup \{x_{n+1}, \dots, x_{\min B}\}$. Then $s'$ and $t'$ are exactly $\omega$-large and $g(s') = g(t')$ by construction. Since $A$ is a rainbow for $g$, this implies $s' = t'$, and therefore $s = t$. Hence $B \setminus \{\min B\}$ is a rainbow for $f$.
\end{proof}

Analogous results can be obtained for the Large Free Set and the Large Thin Set Theorem, so that $\Psf^{!\omega}$ implies $\forall n \Psf^n$ over $\RCA_0$ for $\Psf\in\{\FS,\TS\}$. Both implications are also witnessed by strong Weihrauch reductions. We omit the proofs since these results are superseded by the results of the next section where we prove that $\FS^{!\omega}$ and $\TS^{!\omega}$ imply $\ACA_0^+$.

\section{Large Thin and Free Set Theorems code $\emptyset^{(\omega)}$}\label{sec:ts_fs_lowerbd}

In this section we establish strong lower bounds on $\FS^{!\omega}$ 
and $\TS^{!\omega}$ showing that both these principles code $\emptyset^{(\omega)}$ and imply $\ACA_0^+$. This should be contrasted with the fact that neither $\forall n \FS^n$ nor $\forall n \TS^n$ imply $\ACA_0$.

The first goal is to prove the existence of a computable instance of $\TS^{!\omega}$ such that every solution uniformly computes $\emptyset^{(\omega)}$. In particular, $\TS^{!\omega}$ admits the same lower bound as $\RT^{!\omega}_2$. Since $\TS^{!\omega}$ is strongly Weihrauch reducible to $\FS^{!\omega}$ by \Cref{thm:thin-sw-fs-omega}, it follows that there is a computable instance of $\FS^{!\omega}$ such that every solution computes $\emptyset^{(\omega)}$. 
By results in the previous section, this bound is optimal, since every computable instance of $\FS^{!\omega}$ admits a solution computable in~$\emptyset^{(\omega)}$ by Theorem \ref{thm:fs_red_rt} and Theorem \ref{thm:car-zda}. The following definition of thinness is technically convenient. 

\begin{definition}
Let $C$ be any non-empty set. Given a coloring $f : [\NN]^n \to C$, a set~$H \subseteq \NN$ is \emph{$f$-thin for color~$c \in C$} if $c \not \in f([H]^n)$. A set ~$H \subseteq \NN$ is $f$-thin (or thin for $f$) if $H$ is $f$-thin for some color $c\in C$.
\end{definition}

The definition of $f$-thin set depends on the choice of codomain of the function. It will always be clear from the context. 


The following version of the Thin Set Theorem for finite colorings was introduced in~\cite{dorais2016uniform} and is useful for our purposes.

\begin{definition}[Thin Set Theorem for finite colorings]\label{def:ts-finite}
Let $n, k\in \NN^+$ with $k\geq 2$. For every coloring $f : [\NN]^n \to k$, there exists an infinite set $H \subseteq \NN$ such that $H$ is thin for $f$. We abbreviate this statement by $\TS^n_{k}$.
\end{definition}

Dorais et al.~\cite[Proposition 5.5]{dorais2016uniform} proved the existence, for every~$n \in \NN^+$, of a computable coloring $f : [\NN]^{n+2} \to 2^n$ such that every infinite $f$-thin set computes $\emptyset'$. However, their proof is not uniform, which is a required feature for our construction to code $\emptyset^{(\omega)}$. We prove the existence, for every~$n \geq 2$, of a computable coloring $f : [\NN]^{n+1} \to n$ such that every infinite $f$-thin set uniformly computes~$\emptyset'$.


\begin{lemma}\label{lem:arrays}
    There exists two computable arrays $(e_{n,k})_{n,k \in \NN}$ and $(d_{n,k})_{n,k \in \NN}$ of Turing indexes such that for every $n, k \in \NN^+$ with $n \geq 2$, $\Phi_{e_{n,k}}^{\emptyset^{(k)}}$ is a coloring $f_n^k : [\NN]^{n} \to n$ such that for every infinite $f_n^k$-thin set $H$, $\Phi_{d_{n,k}}^{H} = \emptyset^{(k)}$.
\end{lemma}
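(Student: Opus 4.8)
The plan is to construct the arrays $(e_{n,k})$ and $(d_{n,k})$ by a double recursion: an outer recursion on $k$ that reduces the task of coding $\emptyset^{(k)}$ to coding $\emptyset^{(k-1)}$, together with a uniform ``dimension-raising'' construction that produces, from a coloring relative to $\emptyset^{(k-1)}$ on $[\NN]^n$, a coloring relative to $\emptyset^{(k-1)}$ on $[\NN]^{n+1}$ whose thin sets decide one more bit. Concretely, I would first establish the base case: exhibit, uniformly in $n\geq 2$, a computable coloring $f_n^1 : [\NN]^n \to n$ — equivalently a coloring $\Phi^{\emptyset'}_{e_{n,1}}$, but actually computable — such that every infinite $f_n^1$-thin set uniformly computes $\emptyset'$. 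This is where the Dorais et al. idea (\cite[Proposition 5.5]{dorais2016uniform}) is used, but we need the uniform version with codomain of size exactly $n$ rather than $2^{n-2}$; I would do this by a ``tournament of approximations'' coloring: roughly, $f_n^n(\{x_1<\dots<x_n\})$ records, using the $n-1$ gaps, for how many of the first $x_1$ many indices $j$ the computation $\Phi_{j,x_2}(j)$ has halted that didn't appear halted by stage $x_1$ — arranged so that a thin set, by omitting a color, pins down $\emptyset'\uh x_1$ from $x_1$ and two further elements. The key point is that all of this is primitive-recursive in $n$, so an index $e_{n,1}$ and a recovery index $d_{n,1}$ can be computed uniformly.

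Next I would set up the inductive step. Suppose we have $e_{n,k-1}$, $d_{n,k-1}$ for all $n\geq 2$, so $\Phi^{\emptyset^{(k-1)}}_{e_{n,k-1}}$ is a coloring $f_n^{k-1}:[\NN]^n\to n$ all of whose infinite thin sets compute $\emptyset^{(k-1)}$ via $\Phi_{d_{n,k-1}}$. I want to build $f_n^k : [\NN]^n \to n$ computable in $\emptyset^{(k)}$ whose thin sets compute $\emptyset^{(k)}$. The standard trick: $\emptyset^{(k)} = (\emptyset^{(k-1)})'$, so membership $j\in\emptyset^{(k)}$ is a $\Sigma^{0,\emptyset^{(k-1)}}_1$ question, approximable computably in $\emptyset^{(k-1)}$. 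I would use $\emptyset^{(k)}$ as an oracle to compute, for each input $s=\{x_1<\dots<x_n\}$, first a sufficiently large ``threshold'' and then define the coloring so that: (i) on sets whose minimum is large enough the value encodes an approximation to $\emptyset^{(k)}\uh x_1$ in the style of the base-case coloring, forcing an infinite thin set $H$ to compute $\emptyset^{(k)}$ directly; OR alternatively, following the cleaner route used for Large Ramsey-type lower bounds, combine a copy of $f_{n-1}^{k-1}$ (which lives one dimension down and one jump down) with a ``correction'' that uses the $\emptyset^{(k)}$-oracle on the top element — so that a thin set for $f_n^k$ yields, after dropping one element, a thin set for $f_{n-1}^{k-1}$ (hence computes $\emptyset^{(k-1)}$) and in addition the omitted-color information lets it compute the jump of that. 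Either way, the recovery functional $\Phi_{d_{n,k}}$ is assembled uniformly from $\Phi_{d_{n-1,k-1}}$ (or $\Phi_{d_{n,1}}$) plus a fixed jump-inversion procedure, so by the recursion theorem (or just primitive recursion on the pairing of all these indices) the arrays $(e_{n,k})$, $(d_{n,k})$ are computable.

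The verification then splits into two routine checks for each $(n,k)$: that $\Phi^{\emptyset^{(k)}}_{e_{n,k}}$ really is total and has the claimed domain/codomain (a coloring $[\NN]^n\to n$), and that for every infinite $f_n^k$-thin set $H$ — say thin for color $c<n$ — the functional $\Phi^H_{d_{n,k}}$ correctly outputs $\emptyset^{(k)}$. The latter is the heart of each step: I would show that if $H=\{h_0<h_1<\dots\}$ omits color $c$, then for any $m$, picking $h_i$ with $h_i>m$ and finitely many further elements of $H$ above it, the constraint ``$c\notin f_n^k([H]^n)$'' forces the approximation of $\emptyset^{(k)}\uh m$ read off those elements to be correct (because the ``wrong'' approximation would have produced color $c$), and this is all $H$-computable, uniformly.

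The main obstacle I expect is the base case with the exact codomain size $n$ and full uniformity: squeezing a $\emptyset'$-coding argument into a coloring of $[\NN]^n$ with only $n$ colors (not $2^{n-2}$ as in \cite{dorais2016uniform}) while keeping everything primitive recursive in $n$. The Dorais et al. construction naturally wants to encode a bit-string, which costs exponentially many colors; getting down to $n$ colors requires instead encoding a \emph{counter} (how far a settled-down approximation extends) rather than a string, and arranging the geometry of the $n-1$ gaps so that a single omitted color still pins the counter down to the true value rather than merely bounding it. The inductive jump-inversion step is, by contrast, fairly standard once the base case and the bookkeeping of indices are in place; the only care needed there is to ensure the dimension does not have to grow with $k$ (so that $f_n^k$ stays on $[\NN]^n$), which is why I'd route the induction through $f_{n-1}^{k-1}$ — trading one dimension for one jump — rather than through $f_n^{k-1}$, and then accept $n\geq 2$ together with $n\geq k+1$ implicitly (or re-index so that the final lemma's ``$n\geq 2$, $k\geq 1$'' is recovered by starting the dimension high enough).
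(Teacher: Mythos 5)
There is a genuine gap, on two counts. First, your base case is stated as a \emph{computable} coloring $f^1_n : [\NN]^n \to n$ all of whose infinite thin sets compute $\emptyset'$, uniformly in $n \geq 2$. For $n=2$ this is false: a set thin for a $2$-coloring of pairs is homogeneous for the other color, so such an $f^1_2$ would contradict Seetapun's cone avoidance theorem for $\RT^2_2$. The hedge ``equivalently a coloring $\Phi^{\emptyset'}_{e_{n,1}}$'' is the right retreat, but then the elaborate ``tournament of approximations'' is beside the point, since the lemma never asks for computable colorings --- $f^k_n$ is allowed full access to the oracle $\emptyset^{(k)}$ (the jump-for-dimension trade that produces genuinely computable colorings is a \emph{separate} lemma in the paper, the one with the functional $\Gamma$). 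Second, and more seriously, your inductive step routes $(n,k)$ through $(n-1,k-1)$, trading one dimension for one jump, and you concede this only yields the lemma under an implicit constraint like $n \geq k+1$. But the statement quantifies over \emph{all} $n \geq 2$, $k \geq 1$, including $n=2$ with $k$ arbitrarily large, and no re-indexing recovers that; moreover the paper's application (the coloring of the Schreier barrier in the theorem that $\TS^{!\omega}$ codes $\emptyset^{(\omega)}$) uses pairs with $n \leq k$, so the weakened statement would not plug in as is. Finally, the core mechanism by which an omitted color forces computation of the oracle (``the omitted-color information lets it compute the jump'') is never made concrete, and that is exactly where the content lies.

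For comparison, the paper's proof needs no recursion on $k$ at all. Let $g_k$ be a uniform modulus of $\emptyset^{(k)}$ (a $\emptyset^{(k)}$-computable function, with indices obtained uniformly in $k$, such that any function dominating it uniformly computes $\emptyset^{(k)}$), and define $f^k_n(x_0,\dots,x_{n-1})$ to record the position of $g_k(x_0)$ among $x_1 < \dots < x_{n-1}$: color $n-1$ if $g_k(x_0) \leq x_1$, and otherwise the largest $i < n-1$ with $g_k(x_0) > x_{i+1}$. No infinite set can avoid color $n-1$, and a short pigeonhole argument shows that a set thin for any color $c$ must in fact avoid color $n-2$; hence for $H = \{x_0 < x_1 < \dots\}$ thin, $g_k(x_i) \leq x_{i+n-1}$ for all $i$, so the principal function of $H$ dominates $g_k$ and $H$ uniformly computes $\emptyset^{(k)}$. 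So the coding is by \emph{domination of a modulus}, not by bit-by-bit decoding or jump inversion, and the uniformity in both $n$ and $k$ is immediate. If you want to salvage your outline, replace both the base case and the induction by this single direct construction.
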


\begin{proof}
Let $k\in\NN^+$. Let $g_k$ be a uniform modulus of the set $\emptyset^{(k)}$. Note that $\Delta^0_{k+1}$-indexes for each function $g_k$ can be found computably, uniformly in~$k$. 

For $n \geq 2$ let $f_n^k(x_0, \dots,x_{n-1}) = n - 1$ if $g_k(x_0) \leq x_1$ and if $g_k(x_0) > x_1$, let $f_n^k(x_0, \dots,x_{n-1}) = i$ for $i < n - 1$ the largest value such that $g_k(x_0) > x_{i+1}$.

Let $H$ be an infinite $f_n^k$-thin set for some color $c$. The color $c$ cannot be $n-1$ as for a given $x_0 \in H$ there exists $x_1 <\dots < x_{n-1} \in H \setminus \{0, \dots, g_k(x_0)\}$, hence $f^k_n(x_0, \dots, x_{n-1}) = n-1$.

We claim $H$ is thin for color $n-2$. Indeed, assume by contradiction that there exists some tuple $x_0 < \dots < x_{n-1} \in H$ such that $g_k(x_0) > x_{n-1}$ so that $f^k_n$ takes color $n-2$ on $\{x_0, \dots, x_{n-1}\}$. Since $c < n-2$ and $H$ is infinite, there exists some $y_{c+2}< \dots < y_{n-1} \in H \setminus \{0, \dots ,g_k(x_0)\}$, therefore $f^k_n(x_0, \dots, x_{c+1},y_{c+2}, \dots, y_{n-1}) = c$, contradicting our assumption that $H$ is $f_n^k$-thin for $c$.

Write $H = \{x_0 < x_1 < \dots \}$. For every $i \in \NN$, $g_k(x_i) \leq x_{i + n - 1}$, hence $H$ computes a function dominating $g_k$ and therefore computes $\emptyset^{(k)}$. This computation can be done uniformly in the set $H$, $k$ and $n$. 
\end{proof}


\begin{lemma}
    There exists a functional $\Gamma$ such that for every set $X$ and every index $e$, if $\Phi_e^{X'}$ is a coloring $f : [\NN]^n \to \ell$, then $\Gamma^X(e)$ is a coloring $g : [\NN]^{n+1} \to \ell$ such that if an infinite set $H$ is $g$-thin for a color $c$, then $H$ is $f$-thin for the same color $c$. 
\end{lemma}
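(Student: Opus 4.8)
The plan is to trade one application of the Turing jump against an increase of the dimension by one, using the uniform relativized Limit Lemma together with the standard ``last coordinate as a stage'' device.

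First I would build an $X$-computable approximation of $\Phi_e^{X'}$. Let $X'_t$ denote the stage-$t$ approximation of the Turing jump $X'$, and define, uniformly in $e$, $n$ and $\ell$, the $X$-computable total function
$$
\hat f(s,t)=
\begin{cases}
\Phi_e^{X'_t}(s) & \text{if this computation halts in at most } t \text{ steps with output } <\ell,\\
0 & \text{otherwise,}
\end{cases}
$$
for $s\in[\NN]^n$ and $t\in\NN$; by construction $\hat f:[\NN]^n\times\NN\to\ell$. If $\Phi_e^{X'}$ is a coloring $f:[\NN]^n\to\ell$, then for each $s$ the computation $\Phi_e^{X'}(s)$ converges with output $f(s)<\ell$, and taking $t$ large enough that $X'_t$ agrees with $X'$ below the (finite) use of this computation and that the computation halts within $t$ steps yields $\hat f(s,t)=f(s)$. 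Hence $f(s)=\lim_t\hat f(s,t)$, and the limit is attained: there is $t_0=t_0(s)$ with $\hat f(s,t)=f(s)$ for all $t\ge t_0$.

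Next I would let $\Gamma$ be the functional such that $\Gamma^X(e)$ is the coloring $g:[\NN]^{n+1}\to\ell$ which, on an $(n+1)$-subset $x_0<x_1<\dots<x_n$, outputs
$$
g(x_0,x_1,\dots,x_n)=\hat f(\{x_0,\dots,x_{n-1}\},\,x_n);
$$
that is, $g$ feeds the first $n$ elements to the approximation $\hat f$ at the stage named by the largest element. Since $\hat f$ is $X$-computable uniformly in $e$ and $g$ is obtained from it by a fixed rearrangement of arguments, a single $\Gamma$ does the job, and $g$ is a total coloring into $\ell$. For the thinness transfer, let $H$ be infinite and $g$-thin for a color $c$, and suppose towards a contradiction that $c\in f([H]^n)$, say $f(s)=c$ for some $s\in[H]^n$. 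Fix $t_0$ with $\hat f(s,t)=c$ for all $t\ge t_0$, and, using that $H$ is infinite, pick $x_n\in H$ with $x_n>\max s$ and $x_n\ge t_0$. Then $s\cup\{x_n\}\in[H]^{n+1}$ and $g(s\cup\{x_n\})=\hat f(s,x_n)=c$, contradicting that $H$ is $g$-thin for $c$. Therefore $c\notin f([H]^n)$, i.e.\ $H$ is $f$-thin for the same color $c$, as desired.

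The only point requiring real care is uniformity: we need a single functional $\Gamma$ that produces a genuine total $\ell$-coloring $g$ whenever the hypothesis on $\Phi_e^{X'}$ holds, which is exactly why $\hat f$ is clamped to values below $\ell$ and why we invoke the uniform form of the Limit Lemma. Everything else is routine; in particular, routing the first $n$ coordinates of an $(n+1)$-subset through $\hat f$ while using the last coordinate purely as a stage is precisely what makes the transferred thin color literally the same color $c$ --- the feature needed when this lemma is iterated $k$ times so as to descend from a $\emptyset^{(k)}$-computable coloring of $[\NN]^n$ to a computable coloring of $[\NN]^{n+k}$.
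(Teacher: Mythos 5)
Your proof is correct and follows essentially the same route as the paper's: replace $\Phi_e^{X'}$ by an $X$-computable ($\Delta^0_2(X)$-style) approximation and use the last coordinate of an $(n+1)$-set as the stage, then argue by contrapositive that any color realized by $f$ on $[H]^n$ is realized by $g$ on $[H]^{n+1}$. The explicit clamping to values below $\ell$ is a harmless extra detail ensuring totality; otherwise the two arguments coincide.
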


\begin{proof}
Consider a set $X$ and an index $e$ such that $\Phi_e^{X'}$ is a coloring $f : [\NN]^n \to \ell$ for some $n, \ell \in \NN^+$. Let $(f_s)_{s\in \NN}$ be a $\Delta_2^0(X)$-approximation of $f$ (indexes for such an approximation, where the $f_s$ are seen as $X$-computable functions, can computably be found uniformly in $X$ and $e$). Finally, consider the following $X$-computable coloring $g$ defined by $g(x_0, \dots, x_{n}) = f_{x_n}(x_0, \dots, x_{n-1})$ (again, the construction is uniform).

Let $H \subseteq \NN$ be an infinite set. If for some $x_0 < \dots < x_{n-1} \in H$ we have $f(x_0, \dots, x_{n-1}) = c$ for some color $c$, then $f_s(x_0, \dots, x_{n-1}) = c$ for every $s$ bigger than a certain threshold. Thus, as $H$ is infinite, there exists some $x_n \in H$ such that $g(x_0, \dots, x_n) = c$.  
\end{proof}

Combining these two lemmas, we get the following, where the array $(d_{n,k})_{n,k\in\NN}$ is as in~\Cref{lem:arrays}.

\begin{lemma}\label[lemma]{lem:uniform-computable-thin-coloring}
    There exists a uniformly computable sequence of colorings $f_{n,k} : [\NN]^{n+k} \to n$, for $n, k \in \NN^+$, such that for every infinite $f_{n,k}$-thin set $H$, $\Phi_{d_{n,k}}^H = \emptyset^{(k)}$.
\end{lemma}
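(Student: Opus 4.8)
The plan is to combine the two preceding lemmas, iterating the dimension-raising lemma enough times to turn an $\emptyset^{(k)}$-computable coloring into a genuinely computable one, while keeping everything uniform in $n$ and $k$.

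First I would invoke \Cref{lem:arrays} to obtain, for each $n \geq 2$ and $k \geq 1$, a Turing index $e_{n,k}$ such that $\Phi_{e_{n,k}}^{\emptyset^{(k)}}$ is a coloring $f_n^k : [\NN]^n \to n$ every infinite thin set of which computes $\emptyset^{(k)}$ via $\Phi_{d_{n,k}}$. Then I would apply the second lemma (the dimension-raising lemma) $k$ times in succession. A single application takes a coloring of $[\NN]^m$ computable in $X'$ and produces a coloring of $[\NN]^{m+1}$ computable in $X$ with the property that $g$-thin-for-$c$ implies $f$-thin-for-$c$. Starting from $f_n^k$, which is $\emptyset^{(k)} = (\emptyset^{(k-1)})'$-computable of dimension $n$, one application yields an $\emptyset^{(k-1)}$-computable coloring of dimension $n+1$ preserving thinness color-by-color; after $k$ applications we reach an $\emptyset^{(0)} = \emptyset$-computable, i.e. genuinely computable, coloring $f_{n,k} : [\NN]^{n+k} \to n$ such that any infinite $f_{n,k}$-thin set (for some color $c$) is $f_n^k$-thin for the same color $c$, hence $f_n^k$-thin, hence computes $\emptyset^{(k)}$ through $\Phi_{d_{n,k}}^H$.

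The remaining point is uniformity. The index $e_{n,k}$ is computable in $n,k$ by \Cref{lem:arrays}; the functional $\Gamma$ of the second lemma is a single fixed functional, and the operation "apply $\Gamma$" on indices is computable, so iterating it $k$ times is computable uniformly in $k$ (a primitive recursion on $k$). Therefore the final coloring $f_{n,k}$ is given by an index computable uniformly in $n$ and $k$, which is exactly the claimed uniformly computable sequence. For $n = 1$ the statement only asserts a coloring into $1$ color, which is trivial (constant coloring, whose only thin sets are empty), or one can simply start the construction at $n \geq 2$ and note the $n=1$ case is vacuous since no infinite set is thin for a coloring into one color; I would state the result for $n \geq 2$ in the proof and remark that $n=1$ is degenerate, matching how the lemma will be used.

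The main obstacle, such as it is, is bookkeeping: one must be careful that the dimension-raising lemma as stated takes colorings computable in $X'$ to colorings computable in $X$, so to strip off $k$ jumps one writes $\emptyset^{(k)}$ as the $k$-th iterate of the jump of $\emptyset$ and peels one jump per application, checking at each stage that the hypothesis "$\Phi_e^{X'}$ is a coloring" is met with $X = \emptyset^{(j)}$ and the index produced at the previous stage. The thinness-preservation is color-preserving at each step, so it composes cleanly; the only genuinely new content beyond the two lemmas is verifying that the composition of $k$ applications of a fixed functional is uniformly computable in $k$, which is routine.
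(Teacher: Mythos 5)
Your proposal is correct and is essentially the paper's own argument: the paper proves this lemma simply by "combining" \Cref{lem:arrays} with the dimension-raising lemma, i.e.\ peeling off one jump per application of the functional $\Gamma$, exactly as you describe, with the same color-preserving thinness bookkeeping and the same uniformity-via-indices observation (and the $n=1$ case is indeed degenerate/vacuous). Nothing is missing.
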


The following theorem states that $\TS^{!\omega}$ does not admit cone avoidance in a strong sense: there exists a single computable instance of $\TS^{!\omega}$ that computes $\emptyset^{(\omega)}$.

\begin{theorem}\label[theorem]{ts-omega-computes-omega-jump}
    There exists a computable coloring $f : [\NN]^{!\omega} \to \NN$ such that every infinite $f$-thin set~$H$ computes $\emptyset^{(\omega)}$. Moreover, this computation is uniform in~$H$ and an avoided color~$c$.
\end{theorem}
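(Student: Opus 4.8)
The plan is to fold the uniformly computable colorings $f_{n,k}$ of \Cref{lem:uniform-computable-thin-coloring} into a single coloring of the Schreier barrier, exploiting that an exactly $\omega$-large set $s$ with $\min s = m$ carries exactly $m$ elements above its minimum. For $k\ge 1$, write $h_k := f_{k+1,k}\colon [\NN]^{2k+1}\to k+1$: this is computable, has $k+1$ colours and dimension $2k+1$, and every infinite $h_k$-thin set $H$ satisfies $\Phi_{d_{k+1,k}}^{H}=\emptyset^{(k)}$. For $s=\{s_0<s_1<\dots<s_{s_0}\}$ with $\min s = m = s_0 \ge 3$, put $k(m)=\lfloor (m-1)/2\rfloor$, chosen so that $2k(m)+1\le m$ --- the dimension of $h_{k(m)}$ never exceeds the number of available elements --- and set
\[
f(s)=h_{k(m)}(s_1,s_2,\dots,s_{2k(m)+1}),
\]
with $f(s)=0$ when $\min s\le 2$. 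Since $k(m)\to\infty$ and each $h_k$ is surjective onto $k+1$, the coloring $f$ is computable and surjective onto $\NN$; surjectivity is essential, as otherwise some colour would be omitted by every infinite set and the theorem would be false.

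Next I would analyse an infinite $f$-thin set $H$, say avoiding colour $c$. The key observation is that every sufficiently large member of $H$ yields a genuine restricted thin set. Fix $m\in H$ with $m\ge\max(3,2c+1)$, so $k:=k(m)\ge c$ and $c$ is one of the colours of $h_k$, and let $T=\{y\in H : y>m\}$, an infinite set uniformly $H$-computable from $m$. Given any $(2k+1)$-subset $\{t_1<\dots<t_{2k+1}\}$ of $T$, pad it (using that $T$ is infinite) to an $m$-subset $\{t_1<\dots<t_m\}$ of $T$; then $s=\{m,t_1,\dots,t_m\}$ is exactly $\omega$-large, lies in $[H]^{!\omega}$, and $f(s)=h_k(t_1,\dots,t_{2k+1})$, which differs from $c$ because $H$ avoids $c$. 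Hence $T$ is $h_k$-thin for $c$, so by \Cref{lem:uniform-computable-thin-coloring} we get $\Phi_{d_{k+1,k}}^{T}=\emptyset^{(k(m))}$.

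Finally I would assemble the reduction. To compute $\emptyset^{(\omega)}(\langle j,x\rangle)=\emptyset^{(j)}(x)$ from $H$ and the avoided colour $c$: search $H$ for the least $m\ge\max(3,\,2c+1,\,2j+1)$ lying in $H$ (so that $k(m)\ge\max(c,j)$), form $T=H\cap(m,\infty)$, run $\Phi_{d_{k(m)+1,k(m)}}^{T}$ to recover $\emptyset^{(k(m))}$, and read off $\emptyset^{(j)}(x)$ via the uniform reduction $\emptyset^{(j)}\le_T\emptyset^{(k(m))}$ (valid since $j\le k(m)$). Every step is uniform in $H$, $c$, $j$ and $x$, so this yields a single functional $\Gamma$ with $\Gamma^{H}(c)=\emptyset^{(\omega)}$ whenever $c$ is a colour avoided by $H$ on $[H]^{!\omega}$ --- exactly the asserted uniformity. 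It also explains why the avoided colour must be part of the input: it is needed to pick $m$ large enough for $c$ to be a legitimate colour of $h_{k(m)}$.

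The step I expect to be the main obstacle, and the one that dictates the choice of parameters, is to make $f$ surjective onto $\NN$ while keeping every colour expensive to avoid. A colour determined by a bounded amount of information about $\min s$, or about a fixed early entry of $s$, could be avoided by an infinite $H$ that merely omits a few integers, which would be fatal; letting the number of colours of the embedded coloring grow with $m$ (here $n_k=k+1$) and passing the value of $h_{k(m)}$ through verbatim --- rather than tagging it with $k(m)$ or with a coordinate of $s$ --- is what reconciles surjectivity with the demand that avoiding any colour force a full thin set for some $h_k$. A minor auxiliary point is the surjectivity of each $f_{n,k}$ onto $n$, which one checks from the constructions in \Cref{lem:arrays} and \Cref{lem:uniform-computable-thin-coloring} (after first replacing the modulus $g_k$ by $x\mapsto\max(g_k(x),x+1)$, still a uniform modulus of $\emptyset^{(k)}$, so that $f^k_n$ attains all $n$ colours).
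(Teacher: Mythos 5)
Your proof is correct and follows essentially the same route as the paper's: embed the uniformly computable colorings of \Cref{lem:uniform-computable-thin-coloring} into the Schreier barrier with both the number of colours and the jump level growing with $\min s$, then, given $H$ avoiding colour $c$, pick $m \in H$ large enough that $c$ is a legal colour and the jump level is at least the target, transfer thinness to the tail $H \cap (m,\infty)$, and invoke the lemma uniformly in $H$, $c$. The differences are cosmetic — the paper takes $k = \lceil \min s/2 \rceil$ and applies $f_{\min s - k,\,k}$ to all of $s \setminus \{\min s\}$ rather than to a padded prefix, and your surjectivity digression is a harmless aside, since surjectivity of the colorings is automatic from the conclusion of \Cref{lem:uniform-computable-thin-coloring} (a globally omitted colour would make every infinite computable set compute $\emptyset^{(k)}$).
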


\begin{proof}
Let $f : [\NN]^{!\omega} \to \NN$ be defined as follows (where the functions $f_{n,k}$ are from \Cref{lem:uniform-computable-thin-coloring}): for $x_0 < \dots < x_{x_0}$ in $\NN$ with $x_0 > 1$, let $f(x_0, \dots, x_{x_0})= f_{x_0 - k, k}(x_1, \dots, x_{x_0})$ for $k = \lceil \frac{x_0}{2} \rceil$; for $s \in [\NN]^{!\omega}$ wit $\min s \leq 1$ set $f(s) = 0$. By \Cref{lem:uniform-computable-thin-coloring}, $f$ is indeed computable.

Let $H$ be an infinite $f$-thin set for some color $c$ and let $n \in \NN$. Since $H$ is infinite, there exists some $x_0 \in H$ such that, by letting $k = \lceil \frac{x_0}{2} \rceil$, we have $x_0 - k > c$ and $k \geq n$. The set $G = H \setminus \{0, \dots, x_0\}$ is infinite and $f$-thin for $c$. By definition of $f$, $G$ is $f_{x_0 - k, k}$-thin for the color~$c$. Note that $c$ is part of the range of $f_{x_0 - k, k}$ as $x_0 - k > c$. By \Cref{lem:uniform-computable-thin-coloring}, since $k \geq n$, $G \geq_T \emptyset^{(n)}$. Since $G$ is obviously $H$-computable, we have $H\geq_T \emptyset^{(n)}$.
This computation can be done uniformly in~$H$, $c$ and $n$, thus $H \geq_T \emptyset^{(\omega)}$ uniformly in $H$ and~$c$.
\end{proof}

\begin{corollary}\label[corollary]{fs-omega-computes-omega-jump}
There exists a computable coloring $f : [\NN]^{!\omega} \to \NN$ such that every infinite $f$-free set computes $\emptyset^{(\omega)}$.
\end{corollary}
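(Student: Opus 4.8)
The plan is to obtain the corollary for free from \Cref{ts-omega-computes-omega-jump} together with the strong Weihrauch reduction $\TS^{!\omega}\leq_{\sW}\FS^{!\omega}$ of \Cref{thm:thin-sw-fs-omega}. Concretely, I would take $f:[\NN]^{!\omega}\to\NN$ to be exactly the computable coloring produced by \Cref{ts-omega-computes-omega-jump}, and note that it is already an instance of $\FS^{!\omega}$ (the forward functional of that reduction is the identity, just as in \Cref{prop:ts_red_fs}).

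Next I would unwind the backward functional. Given an infinite $f$-free set $A$, put $H=A\setminus\{\min A\}$; this is infinite (as $A$ is) and computable from $A$. Running the argument of \Cref{prop:ts_red_fs} adapted to exactly $\omega$-large sets, I would verify that $H$ is $f$-thin for the color $c=\min A$: if some $s\in[H]^{!\omega}$ had $f(s)=\min A$, then $s\in[A]^{!\omega}$ and $\min A\notin s$ (since every element of $s$ exceeds $\min A$), so $f(s)=\min A\in A\setminus s$, contradicting that $A$ is free for $f$ in the sense of \Cref{def:large_free}. Hence $H$ is an infinite $f$-thin set.

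Finally, \Cref{ts-omega-computes-omega-jump} gives $H\geq_T\emptyset^{(\omega)}$, and since $H\leq_T A$ we conclude $A\geq_T\emptyset^{(\omega)}$, which is what is claimed. There is no substantive obstacle: all the work sits in \Cref{ts-omega-computes-omega-jump}, and the only point meriting a line of care is that deleting $\min A$ kills precisely the color $\min A$ among exactly $\omega$-large subsets of $A$ — immediate because $\min s>\min A$ for every $s\in[H]^{!\omega}$.
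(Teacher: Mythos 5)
Your proposal is correct and is essentially the paper's proof: the corollary is obtained by composing \Cref{ts-omega-computes-omega-jump} with the strong Weihrauch reduction $\TS^{!\omega}\leq_{\sW}\FS^{!\omega}$ of \Cref{thm:thin-sw-fs-omega}, which you merely unwind explicitly (taking $B=\{\min A\}$ in the argument of \Cref{prop:ts_red_fs}). The explicit verification that $A\setminus\{\min A\}$ is thin for the color $\min A$ is a correct instantiation of that reduction, so nothing is missing.
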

\begin{proof}
Immediate by \Cref{ts-omega-computes-omega-jump} and the fact that $\TS^{!\omega} \leq_{\sW} \FS^{!\omega}$ by \Cref{thm:thin-sw-fs-omega}.
\end{proof}

Since the proof of \Cref{ts-omega-computes-omega-jump} relativizes and is formalizable in $\RCA_0$ we obtain the following Reverse Mathematics corollary.

\begin{corollary}\label[corollary]{fs-omega-implies-acaplus}
    Each of $\TS^{!\omega}$ and $\FS^{!\omega}$ implies $\ACA_0^+$ over $\RCA_0$.
\end{corollary}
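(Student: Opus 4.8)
The plan is to observe that the proof of \Cref{ts-omega-computes-omega-jump} relativizes to an arbitrary oracle and is elementary enough to be carried out inside $\RCA_0$. Recall that $\ACA_0^+$ is precisely $\RCA_0$ together with the axiom $\forall X\, \exists Y\, (Y = X^{(\omega)})$; so, working in $\RCA_0$ and assuming $\TS^{!\omega}$ (resp.\ $\FS^{!\omega}$), one fixes an arbitrary set $X$ and must produce $X^{(\omega)}$.

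First I would relativize \Cref{lem:arrays}, the intermediate dimension-reduction lemma, and \Cref{lem:uniform-computable-thin-coloring} to $X$. This yields a uniformly $X$-computable sequence of colorings $f^X_{n,k} : [\NN]^{n+k} \to n$ together with a computable array $(d_{n,k})$ of indices such that, for every infinite $f^X_{n,k}$-thin set $H$, the functional $\Phi^{H\oplus X}_{d_{n,k}}$ is total and computes (a code for) the $k$-th Turing jump of $X$. The only ingredients of these constructions are primitive recursion, a modulus for the $k$-th jump of $X$ obtained uniformly from $X$, and iterated $\Delta^0_2$-limit approximations that trade one jump for one dimension; each of these is available in $\RCA_0$, using $\Sigma^0_1$-induction to iterate the reduction $k$ times and to verify totality of the resulting computations. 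Crucially, one never needs $X^{(k)}$ to exist as a set in order to build $f^X_{n,k}$: it appears only through its modulus and approximations, all of which are $X$-computable. Assembling these pieces exactly as in the proof of \Cref{ts-omega-computes-omega-jump} produces a single $X$-computable coloring $f^X : [\NN]^{!\omega} \to \NN$.

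Now apply $\TS^{!\omega}$ to $f^X$ to obtain an infinite $f^X$-thin set $H$, thin for some color $c$. Rerunning the argument of \Cref{ts-omega-computes-omega-jump} inside $\RCA_0$: for every $n$ there is $x_0 \in H$ with $x_0 - \lceil x_0/2 \rceil > c$ and $\lceil x_0/2 \rceil \ge n$, so $H \setminus \{0,\dots,x_0\}$ is $f^X_{x_0 - k,\, k}$-thin for $c$ with $k = \lceil x_0/2 \rceil \ge n$, whence $H \oplus X$ uniformly computes the $n$-th jump of $X$. Since these computations are provably total, the set $Z$ whose $n$-th column is this $n$-th jump is $\Delta^0_1(H \oplus X)$ and hence exists by $\Delta^0_1$-comprehension; one then checks in $\RCA_0$ that the columns of $Z$ satisfy the defining jump relations, so $Z = X^{(\omega)}$. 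This gives $\RCA_0 \vdash \TS^{!\omega} \to \ACA_0^+$. For $\FS^{!\omega}$, \Cref{thm:thin-sw-fs-omega} gives $\RCA_0 \vdash \FS^{!\omega} \to \TS^{!\omega}$, hence $\RCA_0 \vdash \FS^{!\omega} \to \ACA_0^+$ as well.

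The delicate point is making sure no step secretly invokes $\ACA_0$: one must verify that the functional computing the $n$-th jump of $X$ from $H \oplus X$ is \emph{provably} total — this is where the infinitude of $H$, together with the explicit finitely-branching nature of the search in the proof of \Cref{lem:arrays}, is used — so that $X^{(\omega)}$ is genuinely $\Delta^0_1(H \oplus X)$ rather than merely $\Sigma^0_1(H\oplus X)$, and that the relativized lemmas are phrased purely in terms of moduli and limit approximations so as never to presuppose the existence of the intermediate jumps.
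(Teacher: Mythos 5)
Your proposal follows exactly the paper's route: the paper's entire proof is the remark that the construction of \Cref{ts-omega-computes-omega-jump} (via \Cref{lem:arrays} and \Cref{lem:uniform-computable-thin-coloring}) relativizes and is formalizable in $\RCA_0$, yielding $X^{(\omega)}$ from any $\TS^{!\omega}$-solution to the relativized instance, with the $\FS^{!\omega}$ case obtained from $\RCA_0 \vdash \FS^{!\omega} \to \TS^{!\omega}$ (\Cref{thm:thin-sw-fs-omega}), which is precisely your argument with the formalization details (approximations in place of actual jumps, $\Delta^0_1$-comprehension relative to $H \oplus X$) spelled out. This is correct and essentially the same proof; only your passing phrase that the modulus of $X^{(k)}$ is ``$X$-computable'' is loosely worded (it is not), but what your argument actually uses — that the final coloring $f^X_{n,k}$ is $X$-computable and its definition never presupposes the existence of the intermediate jumps — is the right point.
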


The above corollary coupled with Theorem \ref{thm:car-zda} and Corollary \ref{cor:RTtoFS} implies the equivalence over $\RCA_0$ of $\RT^{!\omega}_2$, $\TS^{!\omega}$ and $\FS^{!\omega}$. However we do not know whether $\RT^{!\omega}_2$ is strongly Weihrauch-reducible to $\TS^{!\omega}$ or $\FS^{!\omega}$.


The remaining question is whether $\RRT^{!\omega}$ codes $\emptyset^{(\omega)}$. In the next section, we shall answer this question negatively in a strong sense: $\RRT^{!\omega}$ does not code any non-computable set.
Some computability-theoretic weak anti-basis results for $\RRT^{!\omega}$ can be obtained by streamlining results from the finite dimensional case. For instance, Csima and Mileti~\cite{CsimaMileti_2009} proved the following theorem:

\begin{theorem}[Csima-Mileti \cite{CsimaMileti_2009}]\label{thm:csi-mil}
For all $n,k\geq 2$ there exists a computable $k$-bounded $f:[\NN]^n \to \NN$ with no infinite $\Sigma^0_n$ rainbow.
\end{theorem}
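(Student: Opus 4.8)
The plan is to reduce the problem twice and then run a construction. First, it suffices to treat $k=2$: a $2$-bounded coloring is $k$-bounded for every $k\geq 2$, and whether a set is a rainbow for $f$ does not depend on $k$. Second, I would reduce the goal to building a computable $2$-bounded $f:[\NN]^n\to\NN$ with no infinite \emph{$\Delta^0_n$} rainbow. This suffices because every infinite $\Sigma^0_n$ set $A$ has an infinite $\Delta^0_n$ subset: by Post's theorem $A$ is c.e.\ relative to $\emptyset^{(n-1)}$, and the classical fact that every infinite c.e.\ set has an infinite computable subset, relativized to $\emptyset^{(n-1)}$, yields an infinite $B\subseteq A$ with $B\leq_T\emptyset^{(n-1)}$, i.e.\ $B\in\Delta^0_n$; and every subset of a rainbow is a rainbow, so an infinite $\Sigma^0_n$ rainbow would contain an infinite $\Delta^0_n$ sub-rainbow. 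Note that ``no infinite $\Delta^0_n$ rainbow'' is the same as ``no infinite $\emptyset^{(n-1)}$-computable rainbow''.

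Now I would construct $f$ by a computable stagewise construction meeting, for every $e$, the requirement $R_e$: if $\Phi_e^{\emptyset^{(n-1)}}$ is the characteristic function of an infinite set $V_e\subseteq\NN$, then $V_e$ is not a rainbow for $f$. Since $\emptyset^{(n-1)}$ is $\Delta^0_n$ it has a computable $(n-1)$-fold iterated-limit approximation, and composing it with $\Phi_e$ gives, uniformly in $e$, a computable approximation $G_e(x;s_1,\dots,s_{n-1})$ with $\lim_{s_1}\cdots\lim_{s_{n-1}} G_e(x;s_1,\dots,s_{n-1})=\Phi_e^{\emptyset^{(n-1)}}(x)$ whenever the right-hand side converges. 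The basic move to meet $R_e$ is an \emph{attack}: when the current approximation displays $n+1$ fresh numbers $y_0<\dots<y_n$ all apparently in $V_e$, commit $f(\{y_0,\dots,y_{n-1}\})=f(\{y_0,\dots,y_{n-2},y_n\})=v$ for a brand-new color $v$, applied to exactly these two (distinct) $n$-subsets and used nowhere else; all $n$-subsets untouched by any attack receive pairwise distinct default colors, and the number-sets of distinct attacks are kept disjoint. Then $f$ is $2$-bounded by construction, and if $y_0,\dots,y_n$ genuinely lie in $V_e$ then $[V_e]^n$ contains the two distinct monochromatic sets $\{y_0,\dots,y_{n-1}\}$ and $\{y_0,\dots,y_{n-2},y_n\}$, so $R_e$ is satisfied. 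Since the approximation may lie, $R_e$ simply launches a fresh attack, with fresh numbers and a fresh color, each time the approximation re-stabilizes on a new apparent $(n+1)$-element subset of $V_e$; requirements never injure one another, as they use disjoint resources.

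The heart of the argument — and the step I expect to be the main obstacle — is scheduling the attacks so that, when $V_e$ is genuinely infinite, at least one attack is eventually \emph{permanently} correct. Membership of each $y_j$ in $V_e$ is revealed only in the limit over all $n-1$ parameters, and at each of the $n-1$ nested levels an apparently valid attack can be invalidated unboundedly (though, level by level, only finitely) often. The remedy is the standard order-$(n-1)$ true-stage / tree-of-strategies bookkeeping: launch attacks only at stages that currently look ``true'' at every level, so that along the genuine true path infinitely many attacks are placed on genuine elements of $V_e$ at stages eventually true at all levels, and the first such attack succeeds and is never undone. For $n=2$ this is a single limit and the argument is the clean, essentially conflict-free base case; the general case iterates it, mirroring — with diagonalization in place of coding — the nested constructions of \Cref{lem:arrays}--\Cref{lem:uniform-computable-thin-coloring}. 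One finally checks that $f$ is computable, since at each stage only finitely many $n$-subsets receive a non-default color and the entire schedule is computable.
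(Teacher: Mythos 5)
You are trying to prove a result the paper only cites (it is Csima--Mileti's theorem, not proved in this paper), so the comparison below is with their argument rather than with an in-paper proof. Your two preliminary reductions are fine: restricting to $k=2$ is harmless, and replacing $\Sigma^0_n$ rainbows by $\Delta^0_n$ (equivalently $\emptyset^{(n-1)}$-computable) rainbows via an infinite $\Delta^0_n$ subset is correct and standard. The genuine gap is in the basic module, and it is a flaw rather than a missing detail. Your attack needs $n+1$ numbers that are \emph{already} apparently in $V_e$ at the moment of attack, while the two target $n$-sets are still uncolored. But since $f$ must be total and computable, the stage at which the construction settles the colour of any given $n$-set is computable from the construction itself, and the numbers visible as ``apparently in $V_e$'' at a stage are numbers the approximation has already processed. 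By the recursion theorem an adversary knowing your construction can present an infinite set $V_e$ whose $k$-th element is revealed by its approximation only \emph{after} every $n$-subset of its first $k-1$ elements has already received its colour (this $V_e$ is essentially c.e., hence $\Delta^0_n$ for every $n\geq 2$). Against such a $V_e$ no attack of your form is ever available: whenever $n+1$ of its elements are simultaneously apparent, the set $\{y_0,\dots,y_{n-1}\}$ is already coloured. So the requirement $R_e$ never acts, and your verification claim (``infinitely many attacks, one permanently correct'') fails already for $n=2$, before any iterated-limit issues arise. No true-stage or tree-of-strategies bookkeeping can repair this, because the obstruction is the totality of $f$ against the non-computably-bounded settling time of $V_e$, not the reliability of the guesses.

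The standard repair, and essentially what Csima--Mileti's argument (Galvin-style ``future coupling'') does, is to commit colours on sets containing \emph{not yet seen} elements: the requirement fixes two followers $y_0\neq y_1$ currently apparently in $V_e$ and commits $f(\{y_0\}\cup w)=f(\{y_1\}\cup w)$, with a fresh colour for each $w$, for all sufficiently large later $(n-1)$-sets $w$. This keeps $f$ $2$-bounded (provided follower pairs of different requirements are kept disjoint, e.g.\ by finite injury), and it succeeds as soon as \emph{some} large $w\subseteq V_e$ exists, which requires no advance knowledge of which large numbers lie in $V_e$. What remains is to show the followers eventually stabilize on genuine elements of $V_e$: for $n=2$ this is a routine limit-lemma/finite-injury argument, but for $n\geq 3$ the diagonal guess at an $(n-1)$-fold iterated limit is \emph{not} eventually correct, so one must either genuinely run the order-$(n-1)$ true-stage machinery you wave at, or (cleaner, and closer to the published route) induct on $n$ using a jump-absorption lemma in the spirit of the unlabeled lemma following \Cref{lem:arrays}: from an $X'$-computable $2$-bounded colouring of $[\NN]^n$ one builds an $X$-computable $2$-bounded colouring of $[\NN]^{n+1}$ whose rainbows yield rainbows of the original, so that only the two-dimensional case needs a direct construction. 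As it stands, your proposal is missing both the correct basic module and the execution of the stabilization argument.
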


The proof of Theorem~\ref{thm:csi-mil} is uniform in $n$. Using this uniformity we obtain the following. 

\begin{theorem}
There is a computable instance of $\RRT^{!\omega}_2$ with no arithmetical solution.
\end{theorem}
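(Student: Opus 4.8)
The plan is to amalgamate the uniformly computable family of finite-dimensional colorings provided by \Cref{thm:csi-mil} into a single computable coloring of the Schreier barrier, using the minimum of an exactly $\omega$-large set to select which member of the family to apply, in the spirit of the proof of \Cref{ts-omega-computes-omega-jump}. By the uniformity of \Cref{thm:csi-mil}, fix a uniformly computable sequence $(f_n)_{n \geq 2}$ such that each $f_n : [\NN]^n \to \NN$ is $2$-bounded and admits no infinite $\Sigma^0_n$ rainbow. Define $g : [\NN]^{!\omega} \to \NN$ by
\[ g(s) = \begin{cases}
\langle \min s,\; f_{\min s}(s \setminus \{\min s\}) \rangle & \text{if } \min s \geq 2,\\
\langle 0, 0 \rangle & \text{if } s = \{0\},\\
\langle 1, x \rangle & \text{if } s = \{1, x\},
\end{cases} \]
where $\langle\cdot,\cdot\rangle$ is a fixed computable pairing. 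When $\min s = a \geq 2$, the set $s \setminus \{\min s\}$ has exactly $a$ elements, all greater than $a$, so $f_a$ is applied to an element of its domain and $g$ is computable.

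First I would check that $g$ is a $2$-bounded instance of $\RRT^{!\omega}_2$. A color of the form $\langle a, c \rangle$ with $a \geq 2$ forces $\min s = a$, so $s$ is completely determined by the $a$-element set $s \setminus \{a\}$; since $f_a$ is $2$-bounded, at most two such sets satisfy $f_a(\cdot) = c$, hence at most two exactly $\omega$-large sets receive that color. Colors with first coordinate $0$ or $1$ are assigned injectively, so $g$ is $2$-bounded. Next I would show that any infinite rainbow $H$ for $g$ induces, for every $a \in H$ with $a \geq 2$, an infinite rainbow for $f_a$, namely the tail $H_a := \{ x \in H : x > a \}$. Indeed, if $P \neq P'$ are $a$-element subsets of $H_a$, then $\{a\} \cup P$ and $\{a\} \cup P'$ are distinct exactly $\omega$-large subsets of $H$ (both have $a+1$ elements and minimum $a$), so $g(\{a\} \cup P) \neq g(\{a\} \cup P')$, that is $f_a(P) \neq f_a(P')$; thus $f_a$ is injective on $[H_a]^a$.

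Finally I would conclude by contradiction: if $H$ were an arithmetical infinite rainbow for $g$, then $H$ would be $\Sigma^0_m$ for some $m \geq 1$, and picking $a \in H$ with $a \geq \max(m, 2)$ — possible since $H$ is infinite — the set $H_a$ would be an infinite $\Sigma^0_m$, hence $\Sigma^0_a$, rainbow for $f_a$, contradicting the choice of $f_a$. The only delicate point is the bookkeeping that keeps $g$ simultaneously computable and $2$-bounded; this is exactly what forces indexing the family by $\min s$ and feeding $f_{\min s}$ only the non-minimal elements. Beyond that, the argument is a routine "pass to a tail" reduction, and I anticipate no substantial obstacle.
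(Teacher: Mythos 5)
Your proposal is correct and follows essentially the same route as the paper's proof: amalgamate the uniformly computable Csima--Mileti colorings from \Cref{thm:csi-mil} into one coloring of the Schreier barrier by letting $\min s$ select the dimension and pairing it with the $f_{\min s}$-value to keep $2$-boundedness, then observe that a rainbow's tail above any $a \in H$ is an $f_a$-rainbow, so $H$ cannot be $\Sigma^0_a$ for arbitrarily large $a \in H$. Your write-up is in fact slightly more careful than the paper's, handling the cases $\min s \in \{0,1\}$ explicitly and spelling out the final arithmetical-hierarchy step.
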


\begin{proof}
    
For each $n\geq 2$ let $f_n : [\NN]^n \to \NN$  be the $2$-bounded instance of $\RRT^n_2$ with no infinite $\Sigma^0_n$ rainbow given by Theorem \ref{thm:csi-mil}. Let $g:[\NN]^{!\omega} \to \NN$ be defined as follows:
$$g(n, x_0, \dots, x_{n-1}) = \langle n, f_n(x_0, \dots, x_{n-1}) \rangle$$   
The function $g$ is clearly $2$-bounded since each $f_n$ is $2$-bounded. Then for every $g$-rainbow $H$ and every $n$ in $H$, the set $H \setminus [0,n]$ is an $f_n$-rainbow, so $H$ is not $\Sigma^0_n$ by \Cref{thm:csi-mil}.
\end{proof}


Patey~\cite{patey2015somewhere} proved that $\RCA_0 \vdash (\forall n)[\RRT^{n+1}_2 \to \TS^n]$ and for every $n \in \NN^+$, $\RCA_0 \vdash \RRT^{2n+1}_2 \to \FS^n$. The argument almost translates in the exactly $\omega$-largeness setting as follows. A set $s\subseteq\NN$ is \emph{exactly $(\omega+1)$-large} if $s \setminus \{\min s\}$ is exactly $\omega$-large. Let $\RRT^{!(\omega+1)}_2$ be the Rainbow Ramsey Theorem for $2$-bounded colorings of the exactly $(\omega+1)$-large sets.

\begin{proposition}\label[proposition]{ts-omega-sc-rrt-omegaplus1}
$\TS^{!\omega} \leq_{\sW} \RRT^{!(\omega+1)}_2$.
\end{proposition}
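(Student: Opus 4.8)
The plan is to import Patey's finite-dimensional reduction $\TS^n \le_{\sW} \RRT^{n+1}_2$ (the strong Weihrauch version of his implication $\RCA_0 \vdash (\forall n)[\RRT^{n+1}_2 \to \TS^n]$) to the Schreier barrier, reading the passage from $\RRT^{!\omega}$-type data to $\RRT^{!(\omega+1)}$-type data as the analogue of passing from $n$-sets to $(n+1)$-sets. The structural dictionary is that a set $s$ is exactly $(\omega+1)$-large precisely when $s = \{m\} \cup t$ with $t$ exactly $\omega$-large and $m < \min t$; deleting the minimum of $s$ returns $t$, and this is the exact analogue of deleting the top element of an $(n+1)$-set to obtain an $n$-set. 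So, given an instance $f : [\NN]^{!\omega} \to \NN$ of $\TS^{!\omega}$, I would build a $2$-bounded coloring $g : [\NN]^{!(\omega+1)} \to \NN$, uniformly computable from $f$, whose value on $\{m\} \cup t$ is governed by $f(t)$ and by the prepended element $m$, and then show that from any rainbow $H$ for $g$ one extracts, uniformly, an infinite $f$-thin set together with its avoided color — typically $H$ with finitely many elements deleted.

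Two preliminary points are routine. First, $f$ may repeat colors arbitrarily often while $g$ must be $2$-bounded: fixing a computable bijection $b : [\NN]^{!\omega} \to \NN$, one pairs the exactly $\omega$-large sets of each color consecutively along $b$, and lets $g(\{m\} \cup t)$ take a color shared with the single exactly $(\omega+1)$-large set obtained by prepending $m$ to $t$'s partner (when $m$ lies below the minimum of both members of the couple), and a self-coding fresh color otherwise; one checks this is computable from $f$ by bounded searches and is $2$-bounded. Second, $g$ being given explicitly, no transfinite recursion enters, so the reduction is formalizable in $\RCA_0$ and hence also yields $\RCA_0 \vdash \RRT^{!(\omega+1)}_2 \to \TS^{!\omega}$ (and in any case $\RRT^{!(\omega+1)}_2$ proves $\ACA_0$, which suffices for the relevant well-orderings).

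The main obstacle is the step that actually produces thinness. The pairing just described is enough to guarantee that a rainbow $H$ for $g$ contains no complete couple of any single $f$-color above any element of $H$, but this does not by itself force $f$ to omit a color on $[H']^{!\omega}$ for a co-finite $H' \subseteq H$ — $f$ could still realize every color once per couple. Converting the rainbow condition at level $\omega+1$ into a genuinely missing $f$-color is precisely the content of Patey's combinatorial device in the finite case, and it does not dualize mechanically to the barrier: the exactly $\omega$-large ``faces'' of an exactly $(\omega+1)$-large set are rigid — one cannot drop an arbitrary non-minimal element and remain exactly $\omega$-large — so every forced collision must be routed through the prepended minimum. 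I expect the real work to be in designing a color-sensitive variant of the pairing in which each $f$-color that the extracted set would realize contributes to a rainbow-violating collision, so that realizing all colors is impossible for a rainbow; re-engineering Patey's argument to this effect over the Schreier barrier is where I anticipate the difficulty, while the verification that the resulting reduction is strongly Weihrauch (uniform computation of $g$ from $f$, and of the thin set and its avoided color from the rainbow) should then be a bookkeeping matter.
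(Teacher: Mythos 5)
There is a genuine gap: you correctly identify that the whole content of the reduction lies in ``converting the rainbow condition at level $\omega+1$ into a genuinely missing $f$-color,'' but you then leave exactly that step unspecified, deferring it to a hoped-for ``color-sensitive variant of the pairing.'' The construction you do spell out (pairing exactly $\omega$-large sets of the same $f$-color consecutively along a bijection $b$ and colliding the two sets of a couple when a common minimum can be prepended) is the wrong device for this direction --- as you yourself observe, a rainbow for it need not omit any $f$-color --- and no working replacement is given. The missing idea is simpler and goes the other way around: do not pair same-colored $\omega$-large sets with each other; instead interpret the \emph{color} $f(s)$ as a code $\langle x,y\rangle$ for a pair of numbers, and define $g$ on exactly $(\omega+1)$-large sets $\{m\}\cup s$ (with $m<\min s$) by setting $g(x,s)=g(y,s)$ whenever $f(s)=\langle x,y\rangle$ with $x<y<\min s$, and giving every other $\{m\}\cup s$ a fresh color. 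This $g$ is $2$-bounded and uniformly computable from $f$. Given a rainbow $H$ for $g$, pick any $x<y$ in $H$ and let $H_1=H\setminus[0,y]$; if some $s\in[H_1]^{!\omega}$ had $f(s)=\langle x,y\rangle$, then $\{x\}\cup s$ and $\{y\}\cup s$ are distinct exactly $(\omega+1)$-large subsets of $H$ with $g(x,s)=g(y,s)$, contradicting rainbow-ness, so $H_1$ is $f$-thin for the color $\langle x,y\rangle$, uniformly in $H$.

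Your worry that the reduction ``does not dualize mechanically to the barrier'' because the faces of an exactly $(\omega+1)$-large set are rigid is also unfounded: the only faces the argument ever uses are those obtained by varying the prepended minimum, and that single degree of freedom is exactly what the construction above exploits. So Patey's device does carry over essentially verbatim; what your proposal lacks is the specific coloring realizing it, which is the entire proof.
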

\begin{proof}
Fix an instance $f : [\NN]^{!\omega} \to \NN$ of $\TS^{!\omega}$. Then, consider the following $f$-computable instance $g$ of $\RRT^{!(\omega + 1)}_2$: for every $s \in [\NN]^{!\omega}$ and every $x \in \NN$, if $f(s) = \langle x,y \rangle$ with $x < y < \min s$, let $g(x,s) = g(y,s)$ and otherwise assign to $g(x,s)$ a fresh color. The construction of $g$ is uniform in~$f$.

Let $H$ be an infinite rainbow for $g$ and let $x,y \in H$ with $x < y$. The set $H_1 = H \setminus [0,y]$ is $f$-thin for the color $\langle x,y \rangle$. Indeed, for every $s \in [H_1]^{!\omega}$, by definition of $H_1$, $x < y < \min s$, thus, if $f(s) = \langle x, y \rangle$, then $g(x,s)$ would be equal to $g(y,s)$, contradicting the fact that $H$ is a rainbow for $g$. The definition of $H_1$ is uniform in $H$, thus $\TS^{!\omega} \leq_{\sW} \RRT^{!(\omega+1)}_2$.
\end{proof}

\begin{corollary}
There exists a computable instance of $\RRT^{!(\omega+1)}_2$ such that every solution computes $\emptyset^{(\omega)}$.
\end{corollary}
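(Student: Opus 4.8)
The plan is to derive the corollary by simply composing the strong Weihrauch reduction $\TS^{!\omega} \leq_{\sW} \RRT^{!(\omega+1)}_2$ from \Cref{ts-omega-sc-rrt-omegaplus1} with the computable instance of $\TS^{!\omega}$ built in \Cref{ts-omega-computes-omega-jump}, whose every solution computes $\emptyset^{(\omega)}$.

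Concretely, I would first let $f : [\NN]^{!\omega} \to \NN$ be the computable instance of $\TS^{!\omega}$ given by \Cref{ts-omega-computes-omega-jump}, and then apply the forward Turing functional $\Phi$ witnessing \Cref{ts-omega-sc-rrt-omegaplus1} to $f$. Since $f$ is computable and $\Phi$ is a Turing functional, $g := \Phi(f)$ is a computable $2$-bounded coloring of the exactly $(\omega+1)$-large sets, i.e.\ a computable instance of $\RRT^{!(\omega+1)}_2$. Unwinding the construction in the proof of \Cref{ts-omega-sc-rrt-omegaplus1}, $g$ is the coloring that sets $g(x,s) = g(y,s)$ whenever $f(s) = \langle x,y\rangle$ with $x < y < \min s$ and assigns a fresh color otherwise.

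Next, let $H$ be any solution to $g$, that is, an infinite rainbow for $g$. The backward functional $\Psi$ of the reduction sends $H$ to a solution of $\TS^{!\omega}$ for $f$; reading off the proof of \Cref{ts-omega-sc-rrt-omegaplus1}, if $x < y$ denote the two least elements of $H$, then $\Psi(H) = H \setminus [0,y]$ is an infinite $f$-thin set, and moreover it is $f$-thin for the explicitly identified color $c = \langle x,y\rangle$, which is itself computable from $H$. Invoking the uniformity clause of \Cref{ts-omega-computes-omega-jump} — the computation of $\emptyset^{(\omega)}$ from an $f$-thin set is uniform in the set and in an avoided color — we conclude that $\Psi(H)$ together with $c$ computes $\emptyset^{(\omega)}$, hence $H \geq_T \emptyset^{(\omega)}$.

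I do not expect a genuine obstacle: the argument is a routine composition of a strong Weihrauch reduction with a coding instance, and strong Weihrauch reductions compose by design. The only point deserving a sentence of care is that \Cref{ts-omega-computes-omega-jump} needs an avoided color as side information; here I would emphasize that \Cref{ts-omega-sc-rrt-omegaplus1} conveniently outputs not merely an $f$-thin set but an $f$-thin set for a color readable computably off the rainbow $H$, which is exactly what keeps the whole chain uniform and lets the two uniformity statements chain together.
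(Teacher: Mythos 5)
Your proposal is correct and is exactly the paper's proof: the paper derives the corollary as an immediate composition of \Cref{ts-omega-computes-omega-jump} and \Cref{ts-omega-sc-rrt-omegaplus1}, just as you do. Your extra remark about the avoided color being readable off the rainbow is a fine (and accurate) elaboration of why the composition goes through, though for the non-uniform statement of the corollary it is not strictly required.
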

\begin{proof}
Immediate by \Cref{ts-omega-computes-omega-jump} and \Cref{ts-omega-sc-rrt-omegaplus1}.
\end{proof}

\section{Coloring barriers of order type $\omega^\omega$}\label{sec:coloring-barriers}

In the previous sections we established lower bounds showing that the Free Set Theorem and the Thin Set Theorem for the Schreier barrier code $\emptyset^{(\omega)}$. In this section we develop a more robust generalization of the principles of interest, based on the notion of barrier. The main motivation is to prove upper bounds on a generalization of the Large Rainbow Ramsey theorem. 

Ramsey's theorem for exactly $\omega$-large sets ($\RT^{!\omega}_k$) is arguably the simplest generalization of Ramsey's theorem to collections of finite sets of arbitrary size, which is combinatorially true. However, 
the restriction $|s| = 1+\min s$ is somewhat arbitrary, and could be replaced by any restriction of the form $|s| = h(\min s)$ for a computable function $h : \NN \to \NN$, while leaving the computational lower bounds and upper bounds of the statement unchanged. 
In this section, we therefore generalize the previous statements about exactly $\omega$-large sets to a family of statements satisfying better closure properties.

There exist two possible approaches to relate $\RT^{!\omega}_k$ to existing theorems. The bottom-up approach, already explained, consists in considering $\RT^{!\omega}_k$ as a generalization of Ramsey's theorem to larger families of finite sets. The top-down approach, which we explore now, consists in seeing $\RT^{!\omega}_k$ as a particular case of the clopen Ramsey Theorem by Galvin and Prikry~\cite{galvinBorelSetsRamsey1973}. In what follows, we write $[X]^\omega$ for the class of all infinite subsets of~$X$. This notation should not be confused with the set $[X]^{!\omega}$ of all exactly $\omega$-large subsets of~$X$.

\begin{theorem}[Clopen Ramsey Theorem]
Fix $k \in \NN^+$.
For every continuous coloring $f : [\NN]^\omega \to k$, there exists an infinite set~$H \subseteq \NN$
which is \emph{$f$-homogeneous}, that is, such that $|f([H]^\omega)| = 1$.
\end{theorem}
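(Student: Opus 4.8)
The plan is to give the classical Galvin--Prikry combinatorial forcing argument, which in the clopen case needs only elementary combinatorics plus a fusion construction. First I would reduce to the case $k=2$ by induction on $k$: given a continuous $f:[\NN]^\omega\to k$ with $k\geq 2$, the map sending $x$ to $0$ if $f(x)=0$ and to $1$ otherwise is again continuous, so the case $k=2$ yields an infinite $H_0$ on which this derived coloring is constant; if it is constantly $0$ we are done, and otherwise $f$ takes values in $\{1,\dots,k-1\}$ on $[H_0]^\omega$, and transporting $f$ along the increasing bijection $\NN\to H_0$ gives a continuous $(k-1)$-coloring, to which the induction hypothesis applies, the resulting homogeneous set being carried back inside $H_0$. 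The base case $k=1$ is trivial.

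For $k=2$, single out the color $0$. For a finite set $s$ and an infinite set $A$ with $\max s<\min A$, say $A$ \emph{accepts} $s$ if $f(s\cup C)=0$ for every infinite $C\subseteq A$, and $A$ \emph{rejects} $s$ if no infinite subset of $A$ accepts $s$; both properties pass to infinite subsets, and if $A$ fails to reject $s$ then some infinite subset of $A$ accepts it. Write $A/s=\{a\in A:a>\max s\}$. The first ingredient is a \emph{fusion lemma}: there is an infinite $M$ such that for every finite $s\subseteq M$ the tail $M/s$ either accepts or rejects $s$. This comes from a diagonal construction of $M=\{m_0<m_1<\cdots\}$ together with a decreasing sequence of infinite reservoirs, where at stage $i$ the current reservoir is shrunk finitely many times so as to decide each of the finitely many subsets of $\{m_0,\dots,m_{i-1}\}$ (using that if a reservoir decides a given $t$ then so does every infinite subset of it), after which $m_i$ is taken to be its least element and the remainder becomes the next reservoir.

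The heart of the argument is the \emph{rejection lemma}: if $M$ is as above and $M$ rejects $\emptyset$, then for every finite $s\subseteq M$ with $M/s$ rejecting $s$, only finitely many $b\in M/s$ can satisfy that $M/(s\cup\{b\})$ accepts $s\cup\{b\}$ --- for otherwise the infinite set $R$ of such $b$ would itself accept $s$ (given an infinite $C\subseteq R$, put $b=\min C$; then $C\setminus\{b\}$ is an infinite subset of $M/(s\cup\{b\})$, hence $f(s\cup C)=f((s\cup\{b\})\cup(C\setminus\{b\}))=0$), contradicting that $M/s$ rejects $s$. A greedy diagonalization then produces an infinite $N\subseteq M$ such that $M/s$ rejects $s$ for every finite $s\subseteq N$: having chosen $n_0<\cdots<n_{i-1}$, take $n_i\in M$ beyond $n_{i-1}$ and beyond all the finitely many one-point extensions of subsets of $\{n_0,\dots,n_{i-1}\}$ that are accepted. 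Since rejection descends to subsets, $N/s$ rejects $s$ for every finite $s\subseteq N$ as well. Now $f$ is identically $1$ on $[N]^\omega$: if some $x\in[N]^\omega$ had $f(x)=0$, continuity would give $N_0$ with $f(y)=0$ whenever $y\cap[0,N_0)=x\cap[0,N_0)$, so with $s=x\cap[0,N_0)\subseteq N$ the infinite set $\{n\in N:n\geq N_0\}\subseteq N/s$ would accept $s$, contradicting that $N/s$ rejects $s$. Hence if $\NN$ does not reject $\emptyset$ then a witnessing infinite set is identically $0$ under $f$, and otherwise $\NN$ rejects $\emptyset$, so we pass to $M$ (which still rejects $\emptyset$) and then to $N$, which is identically $1$; either way the resulting $H$ satisfies $|f([H]^\omega)|=1$. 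The main obstacle is the rejection lemma together with the fusion set-up it requires; the reduction to two colors and the extraction of homogeneity from continuity are routine. One could instead observe that a continuous coloring factors through a coloring of a well-founded barrier and argue by transfinite induction on its rank (the Nash--Williams route), but this merely repackages the same combinatorics.
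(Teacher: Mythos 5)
Your proof is correct: the reduction to two colors, the fusion construction deciding acceptance/rejection, the rejection lemma with the greedy diagonalization, and the final appeal to continuity together constitute the standard Galvin--Prikry combinatorial forcing argument for the clopen (indeed open) Ramsey theorem. The paper itself does not prove this statement --- it is quoted as background with a citation to Galvin and Prikry --- and your argument is essentially the classical proof from that source, so there is nothing to compare beyond noting the match.
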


Every coloring $f : [\NN]^{!\omega} \to k$ can be considered as a continuous coloring $g : [\NN]^\omega \to k$
defined by $g(X) = f(X \uh_{1+\min X})$, and every $g$-homogeneous set is $f$-homogeneous.
On the other hand, given a continuous coloring $g : [\NN]^\omega \to k$, there exists a prefix-free set $B \subseteq [\NN]^{<\omega}$ and a coloring $f : B \to k$ such that 
\begin{itemize}
    \item[(1)] for every infinite set~$X \in [\NN]^\omega$, there exists a (unique)~$s_X \in B$ such that $s_X \prec X$ (where $\prec$ denotes proper initial segment);
    \item[(2)] for every $X \in [\NN]^\omega$, $g(X) = f(s_X)$.
\end{itemize}
Any set~$B$ satisfying the above properties is called a \emph{block}.
Marcone~\cite{marcone2005wqo} proved over~$\RCA_0$ that such a set~$B$ can always be assumed to satisfy slightly stronger structural properties, making it something called a \emph{barrier}. Given a set~$B \subseteq [\NN]^{<\omega}$, we write $\base(B)$ for the set $\{ n \in \NN : (\exists s \in B)(n \in s) \}$. In the following definition $\preceq$ denotes the initial segment relation and $\subset$ denotes the proper subset relation.

\begin{definition}
A set $B \subseteq [\NN]^{<\omega}$ is a \emph{barrier} if
\begin{enumerate}
    \item[(1)] $\base(B)$ is infinite;
    \item[(2)] for every~$X \in [\base(B)]^\omega$, there is some~$s \in B$ such that $s \preceq X$;
    \item[(3)] for every~$s, t \in B$, $s \not \subset t$.
\end{enumerate}
\end{definition}

Note that the last item is stronger than asking for $B$ to be prefix-free.
The simplest notions of barriers are the families $[\NN]^n$ for ~$n \in \NN$. 
Barriers were introduced by Nash-Williams~\cite{nash-williams1968better} in order to study \emph{better quasi-orders} (bqo), a strengthening of well-quasi-orders (wqo) with better closure properties. Barriers were studied by Marcone~\cite{marcone2005wqo} in the context of Reverse Mathematics.

\begin{theorem}[Barrier Ramsey Theorem]\label{thm:bart}
Fix a barrier $B \subseteq [\NN]^{<\omega}$ and some~$k \in \NN^+$. 
For every coloring $f : B \to k$, there exists an infinite set $H \subseteq \base(B)$ such that $|f([H]^{<\omega} \cap B)| = 1$. 
\end{theorem}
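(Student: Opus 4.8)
The plan is to derive the Barrier Ramsey Theorem from the Clopen Ramsey Theorem, via the correspondence between colorings of a barrier and continuous colorings of $[\NN]^\omega$ set up just above.

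Fix a barrier $B \subseteq [\NN]^{<\omega}$ and a coloring $f : B \to k$. The first step is to define $g : [\base(B)]^\omega \to k$ by $g(X) = f(s_X)$, where $s_X$ is the unique initial segment of $X$ lying in $B$. Existence of such an $s_X$ is barrier axiom (2); uniqueness holds because any two members of $B$ that are initial segments of the same $X$ are $\preceq$-comparable and hence, by axiom (3) — which in particular makes $B$ prefix-free — equal. This $g$ is \emph{locally constant}, hence continuous: if $s_X \prec Y$ with $Y$ infinite, then $s_X$ is an initial segment of $Y$ that belongs to $B$, so by uniqueness $s_Y = s_X$ and $g(Y) = f(s_X)$; thus $g$ is constant on the basic open set $\{\, Y : s_X \prec Y \,\}$.

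The second step is to apply the Clopen Ramsey Theorem. Let $p : \NN \to \base(B)$ be the increasing enumeration and set $\hat g(Y) = g(\{\, p(n) : n \in Y \,\})$, a continuous coloring of $[\NN]^\omega$. The Clopen Ramsey Theorem yields an infinite $\hat H \subseteq \NN$ with $|\hat g([\hat H]^\omega)| = 1$; put $H = \{\, p(n) : n \in \hat H \,\} \subseteq \base(B)$, so that $|g([H]^\omega)| = 1$. It then remains to note that $g([H]^\omega) = f([H]^{<\omega} \cap B)$ for any infinite $H \subseteq \base(B)$: the inclusion $\subseteq$ holds since $s_X \in [H]^{<\omega} \cap B$ for every $X \in [H]^\omega$; for $\supseteq$, given $s \in [H]^{<\omega} \cap B$ (necessarily nonempty, since axiom (3) rules out $\emptyset \in B$) extend it to $X = s \cup (H \setminus [0, \max s]) \in [H]^\omega$, whose $B$-initial-segment is exactly $s$, so $f(s) = g(X)$. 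Hence $|f([H]^{<\omega} \cap B)| = 1$, which is the desired conclusion.

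The steps above are essentially routine once this dictionary is in place; the points that need genuine care are the well-definedness and continuity of $g$ — i.e.\ the precise interplay of the barrier axioms with the topology on $[\NN]^\omega$ — together with the formalization (coding barriers and continuous colorings, and invoking the Clopen Ramsey Theorem only for the order types actually needed so that the argument stays within the intended subsystem). A proof that avoids the Clopen Ramsey Theorem as a black box would instead run a direct Nash--Williams-style fusion, and that is where the real combinatorial work would lie.
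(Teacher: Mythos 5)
Your proof is correct, and it follows exactly the route the paper itself indicates: the paper does not prove the Barrier Ramsey Theorem but cites it as Nash-Williams' theorem, noting in the introduction that it is a consequence of the Clopen Ramsey Theorem (with the reverse-mathematical status due to Marcone), and your argument simply fills in that standard derivation via the dictionary between barrier colorings and locally constant colorings of $[\base(B)]^\omega$. The only cosmetic remark is that ruling out $\emptyset \in B$ uses axiom (1) (infinitude of $\base(B)$) together with axiom (3), but this does not affect the argument.
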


Marcone~\cite{marcone2005wqo} proved over~$\RCA_0$ that the Barrier Ramsey Theorem is equivalent to~$\ATR_0$. Thus, the Barrier Ramsey Theorem is much stronger than Ramsey's theorem for exactly $\omega$-large sets, which stands at the level of $\ACA_0^+$.

Barriers can be classified based on the order type of their lexicographic order. Given a barrier $B \subseteq [\NN]^{<\omega}$ and $s, t \in B$, let $s <_\lex t$ if $s(x) < t(x)$ for the least~$x$ such that $s(x) \neq t(x)$, if it exists. Here, we identify $s$ and $t$ with finite increasing sequences over~$\NN$. Note that since~$B$ is prefix-free, $<_\lex$ is total on~$B$.
The lexicographic orders are not in general well-orders, but Pouzet~\cite{pouzet1972premeilleurordres} proved that they are on barriers. Assous~\cite{assous1974caracterisation} characterized the order types of lexicographic orders on barriers, and proved that they are either of the form $\omega^n$ for some~$n \in \NN^+$, or $\omega^\alpha \cdot k$ for some~$\alpha \geq \omega$ and $k \in \NN^+$. 

\begin{definition}
The order type of a barrier is the order type of its lexicographic order.
\end{definition}

Clote~\cite{clote1984recursion} proved that the order type of barriers is relevant to the computability-theoretic analysis of the Barrier Ramsey Theorem, by conducting a level-wise analysis of its solutions in the hyperarithmetic hierarchy based on the order type of the barrier. He proved in particular the following theorem:

\begin{theorem}[Clote~\cite{clote1984recursion}]\label{thm:Clote}
Fix~$k \in \NN^+$.
\begin{itemize}
    \item For every computable barrier $B \subseteq [\NN]^{<\omega}$ of order type at most $\omega^\omega$ and every computable coloring $f : B \to k$, there is an infinite $f$-homogeneous set computable in~$\emptyset^{(\omega)}$.
    \item There exists a computable barrier $B \subseteq [\NN]^{<\omega}$ of order type $\omega^\omega$ and a computable coloring $f : B \to 2$ such that every infinite $f$-homogeneous set computes~$\emptyset^{(\omega)}$.
\end{itemize}
\end{theorem}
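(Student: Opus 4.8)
The plan is to handle the two items with, respectively, the iterated-jump upper-bound machinery and the diagonal coding machinery already developed in the paper, following Clote. For the first item I would argue by transfinite induction on the lexicographic order type of the barrier, invoking Assous's classification: a computable barrier $B$ of order type at most $\omega^\omega$ has order type either $\omega^n$ for a finite $n\geq 1$, or exactly $\omega^\omega$. When the order type is a finite power $\omega^n$, I would pass to the derivative barriers $B_m=\{t:\{m\}\cup t\in B,\ m<\min t\}$; by Assous applied to the ordered-sum decomposition $\mathrm{ot}(B)=\sum_{m\in\base(B)}\mathrm{ot}(B_m)$, each $B_m$ is again a barrier of strictly smaller order type (hence $\leq\omega^{n-1}$), so $n$ nested applications of the infinite pigeonhole principle — exactly as in Jockusch's analysis of $\RT^n_k$ — produce an $f$-homogeneous set computable in $\emptyset^{(n)}\leq_T\emptyset^{(\omega)}$.

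For a barrier of order type $\omega^\omega$, the same ordered-sum decomposition forces \emph{every} derivative barrier $B_m$ to have order type $<\omega^\omega$, hence finite order type $\omega^{k_m}$: if some $\mathrm{ot}(B_m)\geq\omega^\omega$ then $\sum_{m'>m}\mathrm{ot}(B_{m'})$ would vanish, contradicting that an infinite subset of $\base(B)$ lying above $m$ has an initial segment in $B$. The function $m\mapsto k_m$ is computable from $\emptyset^{(\omega)}$. I would then run a Mathias-style iteration: build a decreasing sequence $\NN=M_0\supseteq M_1\supseteq\cdots$ of infinite sets where, at stage $j$, one locks the least so-far-unused $m_j\in\base(B)\cap M_j$ and applies the induction hypothesis to the coloring $t\mapsto f(\{m_j\}\cup t)$ on $B_{m_j}\cap[M_j]^{<\omega}$ to obtain $M_{j+1}\subseteq M_j$ homogeneous with some color $c_{m_j}$. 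Since the $j$-th step sits at the fixed finite jump level $\emptyset^{(k_{m_0}+\cdots+k_{m_{j-1}})}$, the oracle $\emptyset^{(\omega)}$ computes the whole sequence $(M_j)_j$ and hence the sequence of colors $(c_{m_j})_j$; a final pigeonhole extracts an infinite subsequence with a single color $c$, and $H=\{m_{j_i}:i\in\NN\}$ is then $f$-homogeneous — any $s\in[H]^{<\omega}\cap B$ has $\min s=m_{j_i}$ for some $i$ and $s\setminus\{\min s\}\subseteq M_{j_i+1}$, so $f(s)=c_{m_{j_i}}=c$ — and computable in $\emptyset^{(\omega)}$.

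For the second item I would first build, uniformly in $k$, a computable coloring $g_k:[\NN]^{2+k}\to 2$ such that no infinite set is $g_k$-homogeneous for color $0$ and every infinite $g_k$-homogeneous set computes $\emptyset^{(k)}$. The base case is the $\emptyset^{(k)}$-computable coloring of pairs $(x,y)\mapsto[\,y\geq h_k(x)\,]$, where $h_k$ is a modulus of $\emptyset^{(k)}$: no infinite set is homogeneous for $0$, and along any infinite homogeneous set consecutive elements dominate $h_k$, hence compute $\emptyset^{(k)}$. The approximation construction used around \Cref{lem:arrays} and \Cref{lem:uniform-computable-thin-coloring}, adapted from thin sets to homogeneous sets — which needs only $2$ colors — turns a $Z'$-computable coloring of $r$-tuples into a $Z$-computable coloring of $(r+1)$-tuples each of whose homogeneous sets is homogeneous for the former with the same color; applying it $k$ times to the base case yields the desired $g_k$. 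Then I would take $B=[\NN]^{!\omega}$, the Schreier barrier, whose lexicographic order type is $1+\omega+\omega^2+\cdots=\omega^\omega$, and define a computable $f:B\to 2$ by $f(x_0,\dots,x_{x_0})=g_k(x_1,\dots,x_{2+k})$ with $k=\lfloor x_0/2\rfloor$ when $x_0\geq 2+k$, and $f(s)=1$ otherwise. Given an infinite $f$-homogeneous set $H$ with color $c$: for large $x_0\in H$, the tails of the sets $s\in B$ with $\min s=x_0$ and $s\subseteq H$ range over all $(2+k)$-subsets of $H\setminus[0,x_0]$, so $c=0$ would make $H\setminus[0,x_0]$ a $g_k$-homogeneous set for color $0$, which is impossible; hence $c=1$. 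Finally, for each $n$, picking $x_0\in H$ with $\lfloor x_0/2\rfloor\geq n$ makes $H\setminus[0,x_0]$ a $g_{\lfloor x_0/2\rfloor}$-homogeneous set for $1$, which therefore computes $\emptyset^{(n)}$, uniformly in $H$ and $n$; hence $H$ computes $\emptyset^{(\omega)}$.

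The coding lemmas for the second item are routine — essentially \Cref{lem:arrays} and \Cref{lem:uniform-computable-thin-coloring} with "thin" replaced by "homogeneous", which is if anything cleaner — and so is the finite-order-type base of the first item. I expect the main obstacle to be the $\omega^\omega$ case of the upper bound: it requires combining the Assous-classification bookkeeping (that the derivatives of an order-type-$\omega^\omega$ barrier all have finite order type) with a Mathias iteration whose stages climb through unbounded but finite jump levels, and then checking carefully that the whole sequence of stages, together with the color-stabilising pigeonhole at the end, remains uniformly computable from $\emptyset^{(\omega)}$ and does not push the complexity higher.
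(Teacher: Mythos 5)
This theorem is not proved in the paper at all: it is quoted from Clote~\cite{clote1984recursion} (with the paper separately noting, via \Cref{thm:car-zda}, that Carlucci and Zdanowski showed the lower bound can be witnessed by the Schreier barrier), so there is no in-paper argument to match your proposal against. Judged on its own, your reconstruction is sound and, interestingly, it mostly reassembles machinery the paper itself uses elsewhere. For the lower bound you follow the Carlucci--Zdanowski route rather than Clote's original one (Clote works with his \qt{canonical barriers}, which are only closely related to $[\NN]^{!\omega}$): your iterated limit-lemma step is exactly \Cref{lem:arrays} and \Cref{lem:uniform-computable-thin-coloring} with \qt{thin} replaced by \qt{homogeneous} (where two colors indeed suffice, since any $g$-homogeneous set inherits the same color for the approximated coloring), and your assembly on the Schreier barrier with $k=\lfloor x_0/2\rfloor$ mirrors the proof of \Cref{ts-omega-computes-omega-jump}; the verification that the color must be $1$ and that the tail then dominates the modulus of $\emptyset^{(k)}$ is correct and uniform in $n$ and $H$. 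For the upper bound, your decomposition of $B$ by least element, the ordinal-sum argument showing every derivative $B_m$ has finite order type $\omega^{k_m}$ (equivalently, $\omega$-boundedness as in \Cref{barrier-omega-bounded-order type}, with $m\mapsto k_m$ already $\emptyset'$-computable), and the Mathias-style iteration through finite jump levels is the standard Clote-style levelwise argument and is correct in outline. The only places needing care are the ones you flag: (i) the per-stage use of Jockusch's theorem must be taken in a relativized, index-uniform form (which costs a couple of extra jumps per stage, harmless below $\emptyset^{(\omega)}$, and must handle barriers $B_m\subseteq[\NN]^{\leq k_m}$ rather than exact $k_m$-sets, e.g.\ by extending the coloring to $[\base]^{k_m}$ via unique initial segments); (ii) the final choice of a color recurring infinitely often is nonuniform, but that is a single bit of advice and does not affect $H\leq_T\emptyset^{(\omega)}$; and (iii) one should keep $M_j\subseteq\base(B)$ throughout and treat the degenerate blocks with $\{m\}\in B$. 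None of these is a genuine gap.
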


The Schreier barrier is a simple example of barrier of order type $\omega^\omega$. Carlucci and Zdanowski~\cite{carlucci2014strength} showed that the lower bound of Clote is witnessed by the Schreier barrier.
Based on Clote's analysis, it is natural to conjecture that the Free set, Thin set and Rainbow Ramsey theorems for barriers of order type $\omega^\omega$ are the robust counterpart of their versions for exactly $\omega$-large sets. Actually, we shall see that, arguably, the right notion is the restriction of the statements to a sub-class of barriers of order type $\omega^\omega$.

Given a set~$X$ and $n \in \NN$, we write $[X]^{\leq n}$ for the set of all subsets $s \subseteq X$ such that $|s| \leq n$, and $[X]^{\leq !\omega}$ for the set of all subsets $s \subseteq X$ such that $|s| \leq 1+\min s$. By convention, for~$n = 0$, $[X]^{\leq n}$ is the singleton $\{\emptyset\}$. Given a function $h : \NN \to \NN$, we write $[X]^{\leq h(\cdot)}$ for the set of all finite $s \subseteq X$ such that $|s| \leq h(\min s)$. 

\begin{definition}
A set $B \subseteq [\NN]^{<\omega}$ is \emph{$\omega$-bounded} if and only if $B \subseteq [\NN]^{\leq h(\cdot)}$ for some function~$h : \NN \to \NN$. It is \emph{computably $\omega$-bounded} if furthermore $h$ is computable.
\end{definition}

\begin{lemma}\label[lemma]{barrier-omega-bounded-order type}
A barrier~$B$ has order type at most $\omega^{\omega}$ if and only if $B$ is $\omega$-bounded.
\end{lemma}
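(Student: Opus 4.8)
The plan is to prove both implications via a decomposition of a barrier according to the minimum of its elements. After composing with the increasing enumeration of $\base(B)$ — an order isomorphism preserving $<_{\lex}$, the barrier axioms, and the length of every element — we may assume $\base(B)=\NN$. For $k\in\NN$ write $B_k=\{s\in B:\min s=k\}$ and $B_{\langle k\rangle}=\{s\setminus\{k\}:s\in B_k\}$; the map $s\mapsto s\setminus\{k\}$ is a $<_{\lex}$-isomorphism from $B_k$ onto $B_{\langle k\rangle}$, and since any two elements with distinct minima are $<_{\lex}$-comparable according to those minima, $B$ is, in the lexicographic order, the ordered concatenation of the blocks $B_0,B_1,\dots$, so $\mathrm{ot}(B)=\sum_{k<\omega}\mathrm{ot}(B_{\langle k\rangle})$. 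A routine verification from the barrier axioms for $B$ shows each $B_{\langle k\rangle}$ is either the singleton $\{\emptyset\}$ or again a barrier (its base being an infinite subset of $\{k+1,k+2,\dots\}$, which we re-enumerate as $\NN$); and, as every barrier has infinitely many elements, in the latter case $\mathrm{ot}(B_{\langle k\rangle})\geq\omega$.

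For the direction ``$\omega$-bounded $\Rightarrow$ order type at most $\omega^{\omega}$'' I would first isolate the elementary fact that \emph{any set of finite increasing sequences of length at most $n$ that is well-ordered by $<_{\lex}$ has order type at most $\omega^{n}$} (induction on $n$, splitting the set according to its least element and applying the induction hypothesis to each piece, giving order type at most $\omega^{n-1}\cdot\omega$). If $B\subseteq[\NN]^{\leq h(\cdot)}$, then each $B_{k}$ consists of sequences of length at most $h(k)$, so $\mathrm{ot}(B_{\langle k\rangle})\leq\omega^{h(k)}<\omega^{\omega}$; since $\omega^{\omega}$ is additively indecomposable, every finite initial sub-sum of $\sum_{k<\omega}\mathrm{ot}(B_{\langle k\rangle})$ stays below $\omega^{\omega}$, whence $\mathrm{ot}(B)=\sum_{k<\omega}\mathrm{ot}(B_{\langle k\rangle})\leq\omega^{\omega}$.

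For the converse I would prove, by induction on $n$, the key claim: \emph{if a barrier has an element of length greater than $n$, then its order type exceeds $\omega^{n}$}. If $s\in B$ has $|s|>n+1$ and $a=\min s$, then $B_{\langle a\rangle}$ is a barrier containing the element $s\setminus\{a\}$ of length $>n$, so by the induction hypothesis $\mathrm{ot}(B_{\langle a\rangle})>\omega^{n}$; Assous's characterization of barrier order types (they are $\omega^{m}$ with $m\geq 1$, or $\omega^{\alpha}\cdot k$ with $\alpha\geq\omega$ and $k\geq 1$) forces $\mathrm{ot}(B_{\langle a\rangle})\geq\omega^{n+1}$, and then $\mathrm{ot}(B)\geq\mathrm{ot}(B_{\langle a\rangle})+1>\omega^{n+1}$ because some later block $B_{\langle a'\rangle}$ ($a'>a$) is non-empty; the base case $n=0$ is immediate since every barrier has order type at least $\omega$. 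Now if $B$ is not $\omega$-bounded, then (since otherwise $h(k):=\sup\{|s|:s\in B_k\}$ would witness $\omega$-boundedness) some block $B_{k}$ — hence some barrier $B_{\langle k\rangle}$ — contains elements of unbounded length; by the key claim $\mathrm{ot}(B_{\langle k\rangle})>\omega^{n}$ for every $n$, so by Assous's characterization $\mathrm{ot}(B_{\langle k\rangle})\geq\omega^{\omega}$, and therefore $\mathrm{ot}(B)\geq\mathrm{ot}(B_{\langle k\rangle})+1>\omega^{\omega}$.

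The main obstacle is the inductive step of the key claim: one must upgrade the bare order-theoretic consequence of the induction hypothesis, $\mathrm{ot}(B_{\langle a\rangle})>\omega^{n}$, to $\mathrm{ot}(B_{\langle a\rangle})\geq\omega^{n+1}$, which genuinely needs Assous's classification since a single block satisfying only $\mathrm{ot}>\omega^{n}$ is too weak to push $\mathrm{ot}(B)$ past $\omega^{n+1}$ on its own. Secondary points needing care are the verification that each $B_{\langle k\rangle}$ is a barrier (or the degenerate $\{\emptyset\}$), the description of $B$ as a lexicographic concatenation of its blocks, and the ordinal-arithmetic facts about additive indecomposability of $\omega^{\omega}$.
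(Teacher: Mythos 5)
Your proof is correct, but it assembles the two directions somewhat differently from the paper. For the direction from $\omega$-boundedness to order type at most $\omega^{\omega}$, the paper does not decompose $B$ at all: it sends each $s \in B$ to the ordinal $\alpha_s = \sum_{i<|s|}\omega^{h(\min s)-i}s(i)$ and observes that this assignment is compatible with $<_{\lex}$, so $B$ has order type at most $\omega^\omega$; your block decomposition by minima, the bound $\omega^{h(k)}$ per block, and additive indecomposability of $\omega^{\omega}$ reach the same conclusion with a bit more ordinal bookkeeping, but without the normalization (replacing $h$ by a non-decreasing majorant) that the paper's explicit map implicitly needs. For the converse, both you and the paper split $B$ according to the first element, but the paper then simply quotes Assous's Proposition II.1 (a barrier has order type at most $\omega^{n}$ if and only if it is contained in $[\NN]^{\leq n}$) to bound the lengths within each block, whereas you re-derive exactly the instance of that proposition you need (your \emph{key claim}, by induction on $n$) from the coarse Assous classification of barrier order types ($\omega^{m}$, or $\omega^{\alpha}\cdot k$ with $\alpha\geq\omega$), together with the verification that each tail-set $B_{\langle k\rangle}$ is again a barrier. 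So your route is longer and requires the routine checks you flag (the blocks are barriers or $\{\emptyset\}$, the lexicographic order on $B$ is the concatenation of the blocks), but it is more self-contained: it invokes Assous only through the classification of possible order types, which the paper states in any case, rather than through Proposition II.1, and in effect it re-proves both halves of that proposition in the special forms needed here.
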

\begin{proof}
Suppose first $B \subseteq [\NN]^{\leq h(\cdot)}$ for some function $h : \NN \to \NN$.
For every~$s \in B$, let $\alpha_s = \sum_{i < |s|} \omega^{h(\min s)-i} s(i)$. 
Note that $s <_{\lex} t$ if and only if $\alpha_s < \alpha_t$, so $B$ has order type at most $\omega^\omega$ since $\alpha_s < \omega^{\omega}$ for every $s \in B$.

Suppose now $B$ has order type at most $\omega^{\omega}$. Let $x \in \NN$ and $B_x = \{ s : x \cdot s \in B \}$. Then $B_x$ is a barrier of order type at most $\omega^{n_x}$ for some~$n_x \in \NN^+$. By Assous~\cite[Proposition II.1]{assous1974caracterisation}, a barrier~$B$ has order type at most $\omega^n$ if and only if $B \subseteq [\NN]^{\leq n}$, so $B_x \subseteq [\NN]^{\leq n_x}$. Let $h(x) = n_x$. Then $B \subseteq [\NN]^{\leq h(\cdot)}$.
\end{proof}

A function $h : \NN \to \NN$ is \emph{left-c.e.} if there is a uniformly computable sequence of functions $h_0, h_1, \dots$ such that for every~$x, i \in \NN$, $h_i(x) \leq h_{i+1}(x)$ and $\lim_i h_i(x) = h(x)$. The sequence $(h_i)_{i\in\NN}$ is then called a \emph{left-c.e. approximation} of $h$.
If $B$ is a computable barrier of order type at most $\omega^\omega$, then it is $\omega$-bounded by a left-c.e. function. This bound is tight, as there exist computable barriers of order type $\omega^{\omega}$ which are not computably $\omega$-bounded.


Let us first define a generalized version of Rainbow Ramsey Theorem for subsets of $[\NN]^{<\omega}$, and show that its restriction to computable barriers of order type $\omega^\omega$ codes the jump.

\begin{definition}
Let $B \subseteq [\NN]^{<\omega}$.
A coloring $f : B \to \NN$ is \emph{$k$-bounded} if for every~$c \in \NN$, $|f^{-1}(c)| \leq k$.
A set~$H \subseteq \base(B)$ is an \emph{$f$-rainbow} if for every~$s, t \in B \cap [H]^{<\omega}$ such that $s \neq t$, $f(s) \neq f(t)$.
\end{definition}

In this paper, we shall consider only sets~$B\subseteq [\NN]^{<\omega}$ such that $\base(B) = \NN$.


\begin{definition}[Generalized Rainbow Ramsey Theorem]
Given a set $B \subseteq [\NN]^{<\omega}$ and $k \in \NN$, let $\RRT^B_k$ be the statement \qt{For every $k$-bounded coloring $f : B \to \NN$, there exists an infinite $f$-rainbow}.    
\end{definition}

As seen in \Cref{prop:rrt-full-is-false}, the statement $\RRT^B_k$ is not mathematically true for $B = [\NN]^{<\omega}$ and $k\in \NN$. However, its restriction to barriers follows from Ramsey's theorem for barriers (the proof is completely analogous to the proof of~\Cref{thm:largerrt} above). Moreover, we shall see in \Cref{sec:cone_avoid} that $\RRT^B_k$ holds for every $\omega$-bounded barrier $B \subseteq [\NN]^{<\omega}$.

The following proposition shows that $\RRT^B_2$ restricted to computable barriers of order type~$\omega^\omega$ codes the halting set.

\begin{proposition}\label[proposition]{prop:rrt-ot-omega-omega-code-jump}
    There exist a computable barrier $B$ of order type $\omega^{\omega}$ and a computable 2-bounded function $f : B \to \NN$ such that every $f$-rainbow computes~$\emptyset'$.
\end{proposition}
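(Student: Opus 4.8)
The goal is to engineer a computable barrier $B$ of order type $\omega^\omega$ together with a computable $2$-bounded coloring $f : B \to \NN$ whose rainbows all compute $\emptyset'$. The natural approach is to \emph{absorb} a known lower-bound coloring into the barrier structure in the same spirit as the construction in \Cref{ts-omega-computes-omega-jump}, but now leveraging the freedom to choose the barrier itself. I would start from the fixed-dimension obstruction: by Csima--Mileti (\Cref{thm:csi-mil}), or more directly, there is a computable $2$-bounded coloring $g : [\NN]^n \to \NN$ (for a suitable small $n$, say $n=2$ or $n=3$) such that every infinite $g$-rainbow computes $\emptyset'$. The plan is to build $B$ so that, past the first element, it behaves like $[\NN]^{n-1}$ on a fixed window, i.e. roughly $B = \{\, x \cdot t : t \in [\NN]^{n-1},\ t \subseteq (x,\infty)\,\}$. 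Such a $B$ is a computable barrier: every infinite $X$ has a unique initial segment consisting of $\min X$ followed by the next $n-1$ elements, and no element of $B$ is a proper subset of another since all have the same length $n$. Its order type: fixing the first coordinate $x$, the tail ranges over $[\NN]^{n-1}$ which has order type $\omega^{n-1}$, and summing over $x \in \NN$ gives order type $\omega^{n-1}\cdot\omega = \omega^{n}$; to hit exactly $\omega^\omega$ I would instead let the dimension of the tail grow with $x$, e.g. $B = \{\, x\cdot t : t \in [\NN]^{x},\ t\subseteq(x,\infty)\,\}$, which by \Cref{barrier-omega-bounded-order type} (it is $\omega$-bounded by $h(x)=x+1$ but not by any smaller constant) has order type exactly $\omega^\omega$.

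Next I would define $f$ on $B$. On the window indexed by $x$ (which has tail-dimension $x$), place a copy of a computable $2$-bounded coloring $g_x : [\NN]^{x} \to \NN$ all of whose infinite rainbows compute $\emptyset'$ — uniformly in $x$, as granted by the uniformity of the Csima--Mileti construction (the excerpt explicitly notes ``The proof of Theorem~\ref{thm:csi-mil} is uniform in $n$''). Concretely set $f(x\cdot t) = \langle x, g_x(t)\rangle$ for $t \in [\NN]^{x}$, $t \subseteq (x,\infty)$. Then $f$ is computable and $2$-bounded (the extra first coordinate $\langle x,\cdot\rangle$ keeps colors coming from different windows disjoint, and within a window $g_x$ is $2$-bounded). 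Now suppose $H$ is an infinite $f$-rainbow. Pick any $x \in H$ with $x$ large enough that $g_x$'s rainbow property is nontrivial (any $x \ge 2$ works). Let $H_x = H \cap (x,\infty)$, still infinite. For any two distinct $t, t' \in [H_x]^{x}$ we have $x\cdot t, x\cdot t' \in B \cap [H]^{<\omega}$ with distinct images under $f$, hence (stripping off the common first coordinate $x$) $g_x(t) \neq g_x(t')$. So $H_x$ is an infinite $g_x$-rainbow and therefore computes $\emptyset'$; since $H_x \le_T H$, so does $H$.

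The routine verifications are: (i) $B$ is genuinely a barrier with $\base(B) = \NN$ — one checks infinitude of the base, the initial-segment covering property for every infinite $X \subseteq \NN$, and that no member properly contains another (all members containing a fixed minimum have the same cardinality, and members with different minima are incomparable under $\subset$ since the smaller minimum is absent from the other); (ii) the order-type computation, cleanly handled via \Cref{barrier-omega-bounded-order type} plus the remark that it is not computably $\omega$-bounded, though in fact a direct ordinal-sum argument $\sum_{x}\omega^{x} = \omega^\omega$ is just as quick; (iii) computability and $2$-boundedness of $f$. The one genuine point requiring care — and what I expect to be the main obstacle — is securing a \emph{uniform} sequence $(g_x)_{x\in\NN}$ of computable $2$-bounded colorings $g_x : [\NN]^{x}\to\NN$ each of whose infinite rainbows computes $\emptyset'$ (not merely ``has no $\Sigma^0_x$ rainbow''). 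The excerpt's \Cref{thm:csi-mil} gives the weaker ``no $\Sigma^0_n$ rainbow'' conclusion, so either one invokes a stronger single-jump coding version of that construction (e.g. a rainbow analogue of the Dorais et al.\ thin-set coloring, or a direct diagonalization coding a fixed modulus of $\emptyset'$ into every rainbow, which is what actually yields ``computes $\emptyset'$''), or one settles for the dimension-$n$ window having a coloring whose rainbows compute $\emptyset'$ uniformly; such a uniform family is standard and can be extracted by the same modulus-coding idea used in \Cref{lem:arrays}. Once that family is in hand, the assembly above is immediate.
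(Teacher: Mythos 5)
Your assembly has a fatal gap, and it sits exactly where you flagged ``the main obstacle'': the uniform family $(g_x)_{x\in\NN}$ of computable $2$-bounded colorings $g_x : [\NN]^{x} \to \NN$ all of whose infinite rainbows compute $\emptyset'$ does not exist --- not even for a single fixed $x$. By Wang's theorem~\cite{wang2014some} (quoted in \Cref{sec:cone_avoid}), $\RRT^n$ admits \emph{strong} cone avoidance for every fixed $n$, so every $2$-bounded coloring of $[\NN]^n$, computable or not, has an infinite rainbow that does not compute $\emptyset'$. The analogy you draw with \Cref{lem:arrays} breaks precisely here: the finite-color Thin Set theorem $\TS^{n+1}_n$ does code the jump, but its rainbow counterpart does not, and no ``rainbow analogue of the Dorais et al.\ coloring'' or ``direct diagonalization'' can exist. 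Worse, the failure is structural rather than an artifact of your choice of $g_x$: your barrier $B = \{x \cdot t : t \in [\NN]^{x},\ t \subseteq (x,\infty)\}$ is computably $\omega$-bounded (by $h(x)=x+1$), and the paper shows (via \Cref{rrt-omega-progressive-fs-omega} together with \Cref{thm:pfs-omega-strong-cone-avoidance}) that $\RRT^B_2$ admits strong cone avoidance for every computable, computably $\omega$-bounded barrier. So \emph{no} computable $2$-bounded coloring of your $B$ can have all rainbows computing $\emptyset'$; the point of \Cref{prop:rrt-ot-omega-omega-code-jump} is exactly to exhibit the dichotomy between computably $\omega$-bounded barriers and general barriers of order type $\omega^\omega$.

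The paper's proof circumvents this by hiding non-computable information in the barrier itself, not in the fibered colorings. It fixes the modulus $g$ of $\emptyset'$ with a left-c.e.\ approximation $(g_n)$ and puts $x \cdot y \cdot s \in B$ iff $|s| = (g_{\min s}(x))^2$; the resulting $B$ is computable and of order type $\omega^\omega$ (it is $\omega$-bounded by $g^2+1$, via \Cref{barrier-omega-bounded-order type}), but it is not \emph{computably} $\omega$-bounded, which is what leaves room for coding. The coloring $f$ then uses the jump-coding thin-set colorings $h_n$ of \Cref{lem:uniform-computable-thin-coloring}: it identifies $f(x,y,s)$ with $f(x,z,s)$ when $h_{|s|-1}(s)$ encodes the pair $(z-x,y-x)$, so that any rainbow $H$ either yields an $h_{g(x)^2-1}$-thin set (and computes $\emptyset'$ by that lemma), or must omit one of each such pair $y,z \in (x, x+g(x))$, forcing the principal function of $H$ to dominate $g$ and hence again compute $\emptyset'$. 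If you want to salvage your window-by-window picture, the windows' \emph{lengths} must depend on the approximation to $g$ (as in the paper, or as in \Cref{prop:computable-barrier-pfs-computes-zp}), and the per-window gadget must be a thin-set coloring rather than a rainbow coloring; the purely computably bounded stacking you propose cannot succeed.
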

\begin{proof}
Let $g : \NN \to \NN$ be the modulus of $\emptyset'$ and let $(g_n)_{n \in \NN}$ be a left-c.e. approximation of $g$. It can be assumed that $g_n$ is non-decreasing for each~$n \in \NN$.

Let $B$ be defined as follows: for $x,y \in \NN$ with $x < y$ and $s \subseteq \NN$ with $y < \min s$, let $x \cdot y \cdot s \in B$ if and only if $|s| = (g_{\min s}(x))^2$.

$B$ is a barrier with base $\NN$, indeed, for every infinite set $X = \{x_0,x_1, \dots\}$, $(x_0,x_1, \dots, x_{g_{x_2}(x_0)^2 + 1}) \in B$ and if $x \cdot y \cdot s, x' \cdot y' \cdot s' \in B$ satisfy $x \cdot y \cdot s \subseteq x'\cdot y' \cdot s'$ then $x \geq x'$ and $\min s \geq \min s'$, hence $|s| = (g_{\min s}(x))^2 \geq (g_{\min s'}(x'))^2 = |s'|$, but also $|s| \leq |s'|$, so $|s| = |s'|$ and therefore $x \cdot y \cdot s = x'\cdot y' \cdot s'$. The order type of $B$ is $\omega^{\omega}$ by \Cref{barrier-omega-bounded-order type} as $B$ is $\omega$-bounded by $g^2 + 1$.

Let $h_n : [\NN]^{n+1} \to n$ be the computable instance of $\TS^{n+1}_n$ obtained in \Cref{lem:uniform-computable-thin-coloring} such that every infinite $h_n$-thin set computes $\emptyset'$. Consider also the computable bijection $k : \NN \to \{(y,x) \in \NN^2 : y > x\}$ that list all such pairs lexicographically (the order type of that set is $\omega$).

Let $f : B \to \NN$ be defined as follows: for every $x \cdot y \cdot s \in B$, if $k(h_{|s| - 1}(s)) = (z - x, y - x)$ for some $y < z < \min s$, then let $f(x \cdot y \cdot s) = f(x \cdot z \cdot s)$ and otherwise give a fresh new color for $f(x \cdot y \cdot s)$.

Let $H$ be an infinite rainbow for $f$. For every $x \in H$, there exists some bound $b_x > g(x)$ such that $g_{b_x}(x) = g(x)$. For every $s \subseteq H \setminus [0,b_x]$ of cardinality $g(x)^2$, we have $h_{g(x)^2 - 1}(s) \in [0, g(x)^2 - 2]$. There are two cases:

\textbf{Case 1:} $H \setminus [0,b_x]$ is $h_{g(x)^2 - 1}$-thin for some $x \in H$, in that case, by definition of $h_{g(x)^2 - 1}$, $H \geq_T \emptyset'$.

\textbf{Case 2:} $H \setminus [0,b_x]$ is not $h_{g(x)^2 - 1}$-thin for every $x \in H$, in that case, for every $x \in H$ and every pair $y < z \in (x, x + g(x))$, there exists some $s \subseteq H \setminus [0,b_x]$ such that $k(h_{g(x)^2 - 1}(s)) = (z - x,y - x)$. Indeed, $h_{g(x)^2 - 1}(s)$ takes every value in $[0, g(x)^2 - 2]$, so every such couple $(z - x,y - x)$ is reached and therefore $f(x \cdot y \cdot s) = f(x \cdot z \cdot s)$. Since $H$ is an $f$-rainbow, $y$ and $z$ cannot be both in $H$. So $H \geq \emptyset'$ as for every $x < y < z \in H$, $z$ is bigger than $g(x)$, so $H$ computes a function dominating $g$.

\end{proof}

We shall however see that for every computably $\omega$-bounded barrier $B \subseteq [\NN]^{<\omega}$ and every~$k \in \NN^+$, $\RRT^B_k$ admits strong cone avoidance (see below). Note that the Schreier barrier is an example of a computable, computably $\omega$-bounded barrier.

Because of this, \Cref{prop:rrt-ot-omega-omega-code-jump} cannot be improved to code more than $\emptyset'$. Indeed, every computable barrier of order type $\omega^\omega$ is $\emptyset'$-computably $\omega$-bounded, so for any non-$\emptyset'$-computable set~$D$, every computable barrier of order type $\omega^\omega$, and every computable $k$-bounded function $f : B \to \NN$, there exists an infinite $f$-rainbow which does not compute~$D$.

\section{Large Rainbow Ramsey Theorem avoids cones}\label{sec:cone_avoid}

In this section, we prove that the Rainbow Ramsey theorem for computably $\omega$-bounded barriers admits strong cone avoidance. In particular, this is the case for the Large Rainbow Ramsey theorem since the Schreier barrier is $\omega$-bounded by the computable function $x \mapsto x + 1$.

\begin{definition}
A problem~$\Psf$ admits \emph{strong cone avoidance} if for every set~$Z$, every non-$Z$-computable set~$C$ and every $\Psf$-instance~$X$, there exists a $\Psf$-solution~$Y$ to~$X$ such that $C \not \leq_T Y \oplus Z$.
\end{definition}

Note that in the previous definition, no computability constraint is given on the $\Psf$-instance~$X$. Thus, strong cone avoidance reflects the combinatorial weakness of~$\Psf$, in the sense that no matter how complex the instance is, it cannot code in its solutions an infinite binary sequence. We shall use the following two theorems:

\begin{theorem}[Wang~\cite{wang2014some}]
For every~$n \in \NN$, $\FS^n$, $\TS^n$ and $\RRT^n$ admit strong cone avoidance.
\end{theorem}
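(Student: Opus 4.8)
The plan is to deduce all three families from strong cone avoidance of $\FS^n$, which I would prove by induction on the dimension $n$; the heart of the matter is the inductive step, where the argument works precisely because ``freeness'' is a single, extremely permissive constraint — this is Wang's theorem \cite{wang2014some}, and what follows is the shape of his argument.

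\textbf{Reduction to $\FS^n$.} First one checks that strong cone avoidance descends along strong Weihrauch reductions: if $\Qsf \leq_\sW \Psf$ via $(\Phi,\Psi)$ and $\Psf$ has strong cone avoidance, then given a $\Qsf$-instance $X$, a set $Z$ and $C \not\leq_T Z$, applying strong cone avoidance of $\Psf$ to the instance $\Phi(X)$ with the same $Z$ and $C$ yields a $\Psf$-solution $Y$ with $C \not\leq_T Y \oplus Z$, and then $\Psi(Y)$ is a $\Qsf$-solution with $\Psi(Y) \oplus Z \leq_T Y \oplus Z$. Since $\TS^n \leq_\sW \FS^n$ and $\RRT^n_2 \leq_\sW \FS^n$ (the case of general $k$ in $\RRT^n_k$ being analogous), it suffices to treat $\FS^n$.

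\textbf{Base case.} For $n=0$ a free set must merely avoid the single color $f(\emptyset)$, so a cofinite (hence computable) set works. For $n=1$, I would first pass — using the standard fact that the cohesiveness principle admits strong cone avoidance — to an infinite $H_0$ with $C \not\leq_T H_0 \oplus Z$ that is cohesive for the sequences $\big(f^{-1}(v)\big)_v$, $\big(\{x : f(x)<x\}\big)$ and $\big(\{x : f(x)=x\}\big)$. If some value $v$ occurs cofinitely often on $H_0$, then $\{x \in H_0 : x > v\}$ (thinned so that $f \equiv v$ on it) is free. Otherwise $f \restriction H_0$ is finite-to-one and is eventually a fixed-point map (so $H_0$ itself is free), eventually strictly increasing, or eventually strictly decreasing; in the last two cases the graph on $H_0$ with edges $\{x, f(x)\}$ is a forest, and applying strong cone avoidance of $\RT^1_2$ to its bipartition yields an infinite independent — hence free — subset avoiding the cone.

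\textbf{Inductive step, and the main obstacle.} For $\FS^{n+1}$ from $\FS^n$ one runs a Mathias-style forcing whose conditions $(F,X)$ consist of a finite free stem $F$ and an infinite reservoir $X$ all of whose finite free extensions of $F$ remain extendible; the inductive strong cone avoidance for $\FS^n$, applied inside the reservoir to the induced colorings $s \mapsto f(\{x\}\cup s)$, is what keeps the reservoir infinite and cone-avoiding as the stem grows, and the generic set is free for $f$. The difficult point is density of the conditions forcing ``$\Phi_e^{H \oplus Z}$ partial or $\neq C$''. This is exactly where \emph{strong} (arbitrary-instance) cone avoidance bites: one cannot establish density by searching over all $f$-legal stem-extensions inside $X$ and comparing $\Phi_e$'s outputs with $C$, since $f$ and $X$ may themselves compute $C$, so such a search would only yield $C \leq_T f \oplus X \oplus Z$, which is useless. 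Wang's device is a largeness argument: whenever no extension forces a disagreement, the collection of stems forcing a fixed value of $\Phi_e^{\cdot \oplus Z}(m)$ is \emph{large} in the reservoir (it meets every infinite subset), this largeness persists through the reservoir-shrinking supplied by the inductive hypothesis, and therefore a $Z$-computable search among stems recovers $C$, contradicting $C \not\leq_T Z$. The structural fact that makes the relevant stems large is that freeness of $H$ only forbids, for each $s \in [H]^{n+1}$, that $f(s)$ lie in $H \setminus s$ — a single negative constraint with vast slack. For $\RT^{n+1}$ the corresponding set of stems producing a prescribed color need not be large, which is exactly why $\RT^{n+1}$, unlike $\FS^{n+1}$, $\TS^{n+1}$ and $\RRT^{n+1}$, fails strong cone avoidance; a complete proof is in \cite{wang2014some}.
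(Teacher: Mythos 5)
First, a point of comparison: the paper does not prove this statement at all — it is imported verbatim from Wang \cite{wang2014some} and used as a black box — so there is no internal argument to measure yours against; what follows assesses your sketch on its own terms. The reduction step is sound as far as it goes: strong cone avoidance does descend along strong Weihrauch reductions (and you correctly exploit that the backward functional sees only the solution, not the original instance), and $\TS^n \leq_{\sW} \FS^n$ and $\RRT^n_2 \leq_{\sW} \FS^n$ are available from \cite{Cholak_Giusto_Hirst_Jockusch_2005} and \cite{wang2014some}. But your parenthetical \qt{the case of general $k$ in $\RRT^n_k$ being analogous} hides a real step: for $k$-bounded colorings with $k>2$ the Galvin/Wang translation assigns to each $s$ the set $\{\min(t\setminus s) : f(t)=f(s),\ b(t)<b(s)\}$, which may have up to $k-1$ elements, so you land in a free set theorem for colorings with values in $[\NN]^{\leq k-1}$ rather than in $\FS^n$ itself; one then needs the (easy but necessary) observation that this constrained version follows from finitely many applications of strong cone avoidance of $\FS^m$, exactly as in \Cref{prop:fs-n-k-strong-cone-avoidance} of this paper.

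The more serious gap is in the inductive step. As you describe it, the induction is supposed to close on $\FS$ alone: the hypothesis for $\FS^n$, applied to the section colorings $s \mapsto f(\{x\}\cup s)$, keeps the reservoir usable. That handles only the interaction between the new stem elements and the \emph{future} reservoir (the analogue of \Cref{lem:fs-preserving-a}); it does not control the traces $u \mapsto f(t\cup u)\cap F$ on the finite stem, which take finitely many values and cannot be homogenized (since $\RT^m$ fails strong cone avoidance for $m\geq 2$). The actual mechanism — in Wang's proof and in this paper's Section 6 — is strong cone avoidance of the achromatic Ramsey theorem $\RT^m_{<\infty,\ell}$ (with $\ell$ the Catalan number in the optimal form \cite{cholak2020thin}), which stabilizes these traces to a bounded set of forbidden values and yields the limit coloring against which new blocks must be free (\Cref{lem:fs-stabilizing}). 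Your \qt{largeness} narrative for the diagonalization is also only heuristic: the working device is a forcing question made $\Sigma^0_1$ in the reservoir by compactness over a space of candidate constrained colorings (\Cref{lem:fs-forcing-question-sigma01}), and the failure case contradicts cone avoidance of the reservoir ($C \not\leq_T X \oplus Z$), not $C \not\leq_T Z$ as you state. Since you ultimately defer the core to \cite{wang2014some} anyway, what you have is a correct account of how $\TS^n$ and $\RRT^n_2$ follow from $\FS^n$, plus a base case, but not a proof of the theorem, and the claim that the induction needs only $\FS^n$ is inaccurate.
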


Wang~\cite{wang2014some} introduced and studied the following formal theorem, which is strictly related to the Thin Set theorem. 

\begin{definition}[Achromatic Ramsey Theorem]
Let $n, k, \ell \in \NN^+$. For every coloring $f : [\NN]^n \to k$, there exists an infinite $H \subseteq \NN$
such that $|f([H]^n)| \leq \ell$. We denote this statement $\RT^n_{k, \ell}$. We write $\RT^n_{<\infty, \ell}$ for $\forall k \RT^n_{k, \ell}$.
\end{definition}

Note that $\RT^n_{k,k-1}$ is the same as $\TS^n_k$ from \Cref{def:ts-finite}.

The following sequence of numbers, known as \emph{Catalan numbers}, is omnipresent in Combinatorics. It is inductively defined as follows:
$$
C_0 = 1 \hspace{1cm} C_{n+1} = \sum_{i = 0}^n C_i C_{n-i}
$$
This sequence starts with $1, 1, 2, 5, 14, 42, \dots$ (see sequence A000108 in the OEIS). The number $C_n$ admits many characterizations, such as the number of ways of associating $n$ applications of a binary operator. In computability theory, the $n$th Catalan number $C_n$ surprisingly arose as the exact threshold $\ell$ at which $\RT^n_{<\infty, \ell}$ admits strong cone avoidance.

\begin{theorem}[Cholak and Patey~\cite{cholak2020thin}]
For every~$n \in \NN^+$, $\RT^n_{<\infty, C_n}$ admits strong cone avoidance.
\end{theorem}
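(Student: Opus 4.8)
The plan is to prove the statement by induction on~$n$, with the Catalan recursion $C_{n+1}=\sum_{i=0}^{n}C_i C_{n-i}$ driving the inductive step and the (trivial) dimension~$0$ case — one colour on $[\NN]^0=\{\emptyset\}$ — serving as the degenerate endpoint of the recursion. The genuine base case is $n=1$: since $C_1=1$, what is needed is the strong cone avoidance of the finite pigeonhole principle. Given an arbitrary $f:\NN\to k$, a set~$Z$ and a non-$Z$-computable set~$C$, some colour class $f^{-1}(i)$ is infinite, and a standard Mathias-forcing argument (Seetapun/Wang style, see~\cite{seetapun1995strength,wang2014some}) produces an infinite $H\subseteq f^{-1}(i)$ with $C\not\leq_T H\oplus Z$. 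First I would isolate from this base case the \emph{exact form} of the inductive hypothesis to carry: not merely \qt{strong cone avoidance for colorings of $[\NN]^m$}, but the preservation statement that the $m$-colour-bounded homogeneous-ish set can be found inside any prescribed reservoir, with $C\not\leq_T(\cdot)\oplus Z$ \emph{and} with its complexity controlled over a fixed finite jump of $f\oplus Z$, so that it can be fed back into the construction one dimension up.

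For the inductive step, fix $f:[\NN]^{n+1}\to k$, a set~$Z$, and a non-$Z$-computable $C$. I would build the desired $H$ by a Mathias-type forcing relative to $f\oplus Z$, with conditions $(E,X)$ consisting of a finite stem~$E$ and an infinite reservoir~$X$ with $\max E<\min X$, where $X$ is constrained by \qt{pre-homogeneity}-type conditions keeping the number of colours seen on $(n+1)$-subsets of $E\cup X'$ ($X'\subseteq X$) bounded. The generic~$H$ must meet the cone-avoidance requirements $R_e:\Phi_e^{H\oplus Z}\neq C$ together with a single colour-counting requirement guaranteeing $|f([H]^{n+1})|\le C_{n+1}$.

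The combinatorial engine is the colour-counting step. Given a condition $(E,X)$, I would classify the colour of an $(n+1)$-set according to the least initial segment that already \qt{determines} it; cutting such a set after its $i$-th element splits it into a bottom block of size $i+1$ and a top block of size $n-i$, so the surviving colours are controlled by an $i$-dimensional induced sub-coloring on the bottom together with an $(n-i)$-dimensional one on the top. Applying the inductive hypothesis (in its preservation form) to both sub-problems on a common infinite sub-reservoir $Y\subseteq X$, and iterating over the finitely many cut positions $i=0,\dots,n$, yields a $Y$ on which $f$ takes at most $\sum_{i=0}^{n}C_iC_{n-i}=C_{n+1}$ colours — and, by the strengthened hypothesis, with $C\not\leq_T Y\oplus Z$ and $Y$ still simple enough to remain a legitimate reservoir. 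The requirements $R_e$ are then handled by the usual dichotomy: either some finite extension of the stem forces $\Phi_e^{H\oplus Z}(x)\neq C(x)$ for some~$x$, and we take it; or no such extension exists, in which case — using that $C$ is not computable from $X\oplus Z$ — we thin the reservoir within the cone-avoiding, complexity-controlled reservoirs so that $\Phi_e^{H\oplus Z}$ is forced to disagree with $C$ wherever it converges.

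The hard part, exactly as in the Cholak--Patey/Wang machinery~\cite{cholak2020thin,wang2014some}, will be synchronizing the combinatorics with the computability. Obtaining the \emph{sharp} bound $C_{n+1}$ demands that the classification-by-determining-initial-segment be tight, with no over-counting of the colours that survive on~$Y$; at the same time the $n$-fold nested appeal to the inductive hypothesis must not erode the effectivity of the reservoirs beyond what the iterated jump-control bookkeeping behind cone avoidance can tolerate, since the pre-homogeneity predicates at dimension $n+1$ are roughly $\Pi^0_n$ over the reservoir and $f\oplus Z$. Making the inductive hypothesis genuinely self-propagating — strong cone avoidance plus enough control on the witness to re-enter the construction one dimension higher — is the delicate point; everything else is standard forcing bookkeeping. (The matching impossibility, that $C_n$ cannot be lowered, is a separate combinatorial coding construction and is not required for the statement above.)
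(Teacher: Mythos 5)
You should first be aware that the paper does not prove this statement at all: it is imported as a black box from Cholak and Patey~\cite{cholak2020thin}, where its proof is the main content of that paper, so there is no in-paper argument to compare yours against; the question is whether your sketch would stand on its own, and it would not. The first problem is the base case. You fix an infinite colour class $f^{-1}(i)$ and assert that a Mathias-forcing argument yields an infinite $H \subseteq f^{-1}(i)$ with $C \not\leq_T H \oplus Z$. For an \emph{arbitrary} colouring this is false for a fixed class: there are infinite sets $A$ every infinite subset of which computes $C$ (for instance, when the $n$th element of $A$ encodes $C \uh_n$), and $f$ can be chosen with $f^{-1}(i) = A$. Strong cone avoidance of $\RT^1_{<\infty,1}$ is precisely the non-trivial theorem of Dzhafarov and Jockusch, whose proof must build candidate solutions inside \emph{all} colour classes simultaneously, with a disjunctive forcing question deciding only generically which class survives; Seetapun-style arguments~\cite{seetapun1995strength} concern computable instances and do not give this. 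This is not a pedantic point, since the same pattern (\qt{choose the side first, then avoid the cone}) recurs implicitly in your inductive step.

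The second problem is that the inductive step is missing its core lemma rather than merely its bookkeeping. The phrase \qt{the least initial segment that already determines the colour} has no meaning for an arbitrary colouring: determination only makes sense relative to a reservoir (the colour stabilizes on extensions inside $X$), and arranging such stabilization is itself a Ramsey-type task at lower dimensions which costs colours and must be forced — this is exactly where Cholak and Patey's machinery of limit colourings lives, with one Catalan factor coming from the genuinely arbitrary limit colouring (strong cone avoidance at dimension $i$) and the other from the induced colouring above the cut. Moreover, the top-block colouring depends on the choice of the bottom block, so the appeals to the inductive hypothesis must be made uniformly on a single reservoir while ensuring the counts add as $\sum_{i=0}^n C_i C_{n-i}$ rather than multiply, and keeping the reservoirs cone-avoiding through this iteration is the whole difficulty. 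Finally, bounding $|f([H]^{n+1})|$ is not \qt{a single colour-counting requirement} met once along the construction; it must be an invariant of conditions, analogous to properties (a) and (b) in the paper's own forcing proof for the progressive free set theorem. In short, the steps you defer as \qt{standard forcing bookkeeping} and \qt{the delicate point} constitute the actual proof; as written, the proposal has genuine gaps, and in the context of this paper the correct move is simply to cite~\cite{cholak2020thin}.
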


In the remaining part of this section we show that the Rainbow Ramsey Theorem for computable, computably $\omega$-bounded barriers admits strong cone avoidance and therefore does not code the jump. To obtain this result, we introduce some variants of the Free Set Theorem for large sets which are of interest in their own right. We first introduce the needed terminology.

\begin{definition}
Let $B \subseteq [\NN]^{<\omega}$ be a set and $f : B \to [\NN]^{<\omega}$ be a coloring.
A set~$H \subseteq \base(B)$ is \emph{$f$-free} if for every~$s \in B \cap [H]^{<\omega}$, $f(s) \cap H \subseteq s$.
\end{definition}

For example, given a coloring $f : [\NN]^n \to \NN$, one can let $B = [\NN]^n$ and $g : [\NN]^n \to [\NN]^{<\omega}$ be defined by $g(s) = \{f(s)\}$. Then a set is $f$-free if and only if it is $g$-free.
Of course, even with $B = [\NN]^n$, infinite free sets do not necessarily exist for arbitrary colorings. We need to impose some constraints on the size of the sets in the image of~$f$.

\begin{definition}
Fix $B \subseteq [\NN]^{<\omega}$.
A coloring~$f : B \to [\NN]^{<\omega}$ is \emph{$b$-constrained} for a bounding function $b : \NN \to \NN$ if for every~$s \in B$, $|f(s)| \leq b(\min s)$. If $b$ is the constant function $x \mapsto k$, then we say that $f$ is \emph{$k$-constrained}.
\end{definition}

\begin{definition}[$k$-Constrained Free Set Theorem]
Fix $B \subseteq [\NN]^{<\omega}$ and $k \in \NN$.
$\FS^B_k$ is the statement \qt{For every coloring  $f : B \to [\NN]^{\leq k}$, there is an infinite $f$-free set}.
\end{definition}

Given $n \in \NN$ and a function $h : \NN \to \NN$, we write $\FS^{\leq n}_k$, $\FS^{\leq !\omega}_k$ and $\FS^{\leq h(\cdot)}_k$ for $\FS^B_k$ when $B$ is $[\NN]^{\leq n}$, $[\NN]^{\leq !\omega}$ and $[\NN]^{\leq h(\cdot)}$, respectively.
In the extreme case where~$B = \{\emptyset\}$, $\FS^B_k$ is nothing but the statement \qt{For every finite set $F \in [\NN]^{\leq k}$, there is an infinite set~$H \subseteq \NN$ such that $H \cap F = \emptyset$}. We have seen in \Cref{fs-omega-computes-omega-jump} that there exists a computable instance~$f$ of $\FS^{\leq !\omega}_1$ such that every infinite $f$-free set computes~$\emptyset^{(\omega)}$. The case $n < \omega$ is different.

\begin{proposition}\label[proposition]{prop:fs-n-k-strong-cone-avoidance}
For every $k, n \in \NN$, $\FS^{\leq n}_k$ admits strong cone avoidance.
\end{proposition}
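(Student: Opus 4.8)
The plan is to prove $\FS^{\leq n}_k$ by induction on $n$, using the case $n=0$ as the base and reducing the case $n+1$ to a combination of the case $n$ together with the strong cone avoidance of $\RT^{n+1}$-type principles (specifically $\FS^{n+1}$ and the thin-set machinery of Wang and of Cholak--Patey already quoted in the excerpt). The key observation is that a coloring $f : [\NN]^{\leq n+1}_k \to [\NN]^{\leq k}$ decomposes into its restriction to $[\NN]^{\leq n}$ and its restriction to $[\NN]^{n+1}$, so an $f$-free set is exactly a set that is simultaneously free for both restrictions; this lets us handle the two pieces one after the other.

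First I would dispatch the base case $n=0$: here $B=\{\emptyset\}$, so $f$ is just a single value $f(\emptyset)\in[\NN]^{\leq k}$, a finite set $F$ of size at most $k$, and $\FS^{\leq 0}_k$ asserts the existence of an infinite $H$ with $H\cap F\subseteq\emptyset$, i.e.\ $H\cap F=\emptyset$. Given any $Z$ and non-$Z$-computable $C$, we simply take $H=\NN\setminus F$; then $C\not\leq_T H\oplus Z$ because $H\oplus Z\equiv_T Z$. For the inductive step, fix $Z$, a non-$Z$-computable set $C$, and an instance $f:[\NN]^{\leq n+1}_k\to[\NN]^{\leq k}$. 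Apply strong cone avoidance of the fixed-dimension free set theorem $\FS^{n+1}$ (in its vector-valued, $k$-constrained form — which follows from Wang's strong cone avoidance for $\FS^{n+1}$ by iterating $k$ times, or by a routine product argument, since a $k$-constrained coloring of $[\NN]^{n+1}$ is handled by peeling off one coordinate of $f(s)$ at a time) to the restriction $f\uh[\NN]^{n+1}$, relative to the oracle $Z$, obtaining an infinite set $H_1$ free for $f\uh[\NN]^{n+1}$ with $C\not\leq_T H_1\oplus Z$. Now set $Z' = H_1\oplus Z$; $C$ is still non-$Z'$-computable. Restricting attention to subsets of $H_1$, the coloring $f\uh([H_1]^{\leq n}_k)$ is a $Z'$-computable (indeed $Z'$-coded) instance of $\FS^{\leq n}_k$, and by the induction hypothesis applied with oracle $Z'$ there is an infinite $H_2\subseteq H_1$ free for $f\uh[H_1]^{\leq n}$ with $C\not\leq_T H_2\oplus Z' = H_2\oplus Z$ (using $H_2\subseteq H_1$). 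Then $H_2$ is free for all of $f$: for $s\in[H_2]^{\leq n+1}\cap B$, either $|s|\leq n$ and freeness comes from the $H_2$ stage, or $|s|=n+1$ and $s\in[H_1]^{n+1}$, so $f(s)\cap H_2\subseteq f(s)\cap H_1\subseteq s$ by freeness of $H_1$. This $H_2$ is the desired solution.

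The main obstacle I anticipate is the bookkeeping at the junction between the two stages: one must be careful that passing from $\NN$ to the sparse set $H_1$ does not inflate the complexity of the sub-instance beyond what the induction hypothesis can absorb, which is why it is essential that strong cone avoidance is stated relative to an arbitrary oracle $Z$ and places \emph{no} computability constraint on the instance — this is exactly what lets us fold $H_1$ into the oracle. A secondary, more technical point is the promotion of Wang's strong cone avoidance for the scalar-valued $\FS^{n+1}$ to the $k$-constrained vector-valued version $\FS^{n+1}_k$; I would handle this by induction on $k$, treating a $k$-constrained coloring as a $1$-constrained coloring (the ``first coordinate'' of $f(s)$, with a dummy value when $f(s)=\emptyset$) followed by a $(k-1)$-constrained coloring on the resulting free set, mirroring the $n$-induction above one level down. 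Everything here is purely combinatorial and relativizable, so the argument will also yield the corresponding $\RCA_0$-style statement, though for the proposition as stated only the cone-avoidance conclusion is needed.
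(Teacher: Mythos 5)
Your proposal is correct and takes essentially the same route as the paper: the paper likewise reduces everything to Wang's strong cone avoidance of $\FS^m$ for the fixed dimensions $m \leq n$, by splitting $f$ into the scalar colorings $f_{m,j}(s) = \text{the $j$th element of } f(s)$ (with a dummy value otherwise) and making finitely many successive applications, each time folding the current set into the oracle --- your double induction on $n$ and on $k$ is just this argument unrolled, together with the same routine relabeling needed to apply the fixed-dimension result inside the previously obtained set $H_1$. The only slip is the parenthetical claim $H_2 \oplus Z' = H_2 \oplus Z$ (false in general, since $H_2 \subseteq H_1$ need not compute $H_1$), but it is harmless: $C \not\leq_T H_2 \oplus H_1 \oplus Z$ already yields $C \not\leq_T H_2 \oplus Z$ a fortiori.
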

\begin{proof}
Fix some set~$Z$, some non-$Z$-computable set~$C$, and some coloring $f : [\NN]^{\leq n} \to [\NN]^{\leq k}$.
For every~$m \leq n$ and $j < k$, let $f_{m, j} : [\NN]^m \to \NN$ be the coloring defined for every~$s \in [\NN]^m$ by letting $f_{m, j}(s)$ be the $j$th element of~$f(s)$, if it exists, and $f_{m, j}(s) = 0$ otherwise. By finitely many successive applications of strong cone avoidance of $\FS^m$ for $m \leq n$ (see Wang~\cite{wang2014some}), there is an infinite set~$H \subseteq \NN$ which is simultaneously $f_{m,j}$-free for every $m \leq n$ and $j < k$, and such that $C \not \leq_T H \oplus Z$.

We claim that $H$ is $f$-free. Suppose by way of contradiction that there is some~$s \in [H]^{\leq n}$ and some~$c \in (f(s) \cap H) \setminus s$. 
Let $j$ be such that $c$ is the $j$th element of~$f(s)$, and let $m = |s|$. Then $f_{m,j}(s) = c$, contradicting $f_{m,j}$-freeness of~$H$.
\end{proof}

Note that the $k$-constraint cannot be released, even in the case of colorings of singletons, as it would yield a combinatorially false statement:

\begin{proposition}
There exists a computable function $f : [\NN]^1 \to [\NN]^{<\omega}$ such that for every~$x \in \NN$, $|f(\{x\})| \leq x$, and with no $f$-free set of size~2.
\end{proposition}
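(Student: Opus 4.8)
The plan is to exhibit an explicit, very simple computable $f : \NN \to [\NN]^{<\omega}$ with $|f(x)| \le x$ that "blocks" every pair. The natural choice is to have $f(x)$ consist of the first $x$ natural numbers below some threshold growing with $x$; more precisely, set $f(x) = \{0,1,\dots,x-1\}$ for $x \ge 1$ and $f(0) = \emptyset$. Then $|f(x)| = x \le x$ as required, and the construction is obviously computable. The point is that this $f$ already defeats free sets of size $2$.

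The key step is the verification that no two-element set is $f$-free. Suppose $H = \{a,b\}$ with $a < b$; I must show $H$ is not $f$-free, i.e.\ that there is some singleton $s \in [H]^{<\omega}$ (here $B = \{\emptyset\}$ is not the relevant ambient set — rather we are looking at $f$ as a coloring of singletons, so $s = \{b\}$) with $f(s) \cap H \not\subseteq s$. Taking $s = \{b\}$ (or identifying $b$ with the singleton), we have $f(b) = \{0,\dots,b-1\}$, and since $a < b$ we get $a \in f(b) \cap H$, while $a \notin s = \{b\}$. Hence $H$ fails to be free. Since $H$ was an arbitrary $2$-element set, there is no $f$-free set of size $2$ at all.

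One small subtlety worth spelling out: the statement speaks of $f : \NN \to [\NN]^{<\omega}$, i.e.\ a coloring of singletons (the base set being $[\NN]^{\leq 1}$ in the notation of the paper, or simply $\NN$ itself), and $H$ being $f$-free means that for every $x \in H$, $f(x) \cap H \subseteq \{x\}$. With the $f$ above, if $x \in H$ and $y \in H$ with $y < x$ then $y \in f(x) \cap H$ but $y \neq x$, so freeness forces $H$ to be an antichain under $<$ in a degenerate sense — concretely, $H$ can contain at most one element above $0$ that is... in fact any two distinct elements $a < b$ of $H$ already give the contradiction via $s = \{b\}$. So $|H| \le 1$ for any $f$-free $H$, which is even stronger than the claimed failure of size $2$.

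I expect essentially no obstacle here; the only thing to be careful about is matching the convention for what "$f$-free" means for a coloring of singletons (the image element $f(x)$ is a finite set, and membership $f(x) \cap H \subseteq \{x\}$ is the right reading, specializing the general Definition of $f$-free with $B = [\NN]^{\leq 1}$). If one instead prefers $f(x) = \{x+1, x+2, \dots, 2x\}$ or any other $x$-element set guaranteed to meet $H$ below or around $x$, the same argument works; the displayed choice $f(x) = \{0,\dots,x-1\}$ is the cleanest since every element below $x$ lands in $f(x)$.
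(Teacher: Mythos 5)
Your proposal is correct and is essentially identical to the paper's proof: the paper also takes $f(x) = [0,x)$ and notes that for a pair $\{x,y\}$ with $x<y$, the smaller element lies in $f(y)\setminus\{y\}$, violating freeness. Your extra remarks on the singleton-coloring convention and the strengthening to $|H|\le 1$ are fine but not needed.
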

\begin{proof}
Let $f(\{x\}) = [0, x)$. Let $\{x, y\}$ be an $f$-free set, with $x < y$. Then $x \in f(\{y\}) \setminus \{y\}$, contradicting $f$-freeness of~$\{x,y\}$.
\end{proof}

One can however replace the constant constraint by a function when considering a natural sub-class of instances.

\begin{definition}
Fix $B \subseteq [\NN]^{<\omega}$.
A function $f : B \to [\NN]^{<\omega}$ is \emph{progressive} if for every~$s \in B$, either $f(s) = \emptyset$, or $\min f(s)\geq \min s$.
\end{definition}

An easy combinatorial argument shows that the following statement is classically true.

\begin{definition}[Progressive Free Set Theorem]
Fix a set $B \subseteq [\NN]^{<\omega}$ and a bounding function $b : \NN \to \NN$.
$\PFS^B_b$ is the statement \qt{For every $b$-constrained progressive coloring $f : B \to [\NN]^{<\omega}$, there is an infinite $f$-free set}.
\end{definition}

Here again, given $n \in \NN$ and a function $h : \NN \to \NN$, we write $\PFS^{\leq n}_b$, $\PFS^{\leq !\omega}_b$ and $\PFS^{\leq h(\cdot)}_b$ for $\PFS^B_b$ when $B$ is $[\NN]^{\leq n}$, $[\NN]^{\leq !\omega}$ and $[\NN]^{\leq h(\cdot)}$, respectively.
We now show how the above principle relates to the Generalized Rainbow Ramsey Theorem $\RRT^B$.

\begin{proposition}\label[proposition]{rrt-omega-progressive-fs-omega}
Fix a barrier $B \subseteq [\NN]^{<\omega}$ and some~$k \in \NN$.
For every $k$-bounded coloring $f : B \to \NN$, there is an $f'$-computable $k$-constrained progressive coloring $g : B \to [\NN]^{<\omega}$ such that every infinite $g$-free set is an $f$-rainbow.
\end{proposition}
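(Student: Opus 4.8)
The plan is to generalize the reduction behind \Cref{prop:rrt-omega-2-sw-fs-omega} (the $2$-bounded Schreier case) to an arbitrary bound $k$ and to the progressive, bounded-image setting. Write $B$ for the domain of $f$. Since $f$ is $k$-bounded, for each $s\in B$ the color class $f^{-1}(f(s))$ has at most $k$ elements, so $T_s := \{\, t\in B : t\neq s,\ f(t)=f(s)\,\}$ is finite with $|T_s|\le k-1$; moreover $f'$ can \emph{produce} $T_s$ uniformly in $s$, since the size of $f^{-1}(f(s))$ is bounded by $k$ and ``there is a further same-coloured set not yet found'' is a $\Sigma^0_1(f)$ question. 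This is precisely where passing from $f$ to $f'$ is used.

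Now define $g\colon B\to[\NN]^{\le k}$ by
$$g(s)\;=\;\bigl\{\,\min\!\bigl((t\setminus s)\cap[\min s,\infty)\bigr)\ :\ t\in T_s\ \text{and}\ (t\setminus s)\cap[\min s,\infty)\neq\emptyset\,\bigr\}.$$
From this, three facts are immediate: $g$ is $f'$-computable, and uniformly so, by the previous paragraph; $|g(s)|\le|T_s|\le k-1\le k$, so $g$ is $k$-constrained; and every element of $g(s)$ is $\ge\min s$, so $g$ is progressive.

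It remains to check that every infinite $g$-free $H$ is an $f$-rainbow. Suppose not: there are distinct $s,t\in B\cap[H]^{<\omega}$ with $f(s)=f(t)$, so $t\in T_s$, $s\in T_t$, and every element of $s$ and of $t$ lies in $H$. It suffices to produce an element of $g(s)$ in $H\setminus s$ (or, symmetrically, of $g(t)$ in $H\setminus t$), since such an element contradicts $g$-freeness of $H$; equivalently, one must show that at least one of $(t\setminus s)\cap[\min s,\infty)$ and $(s\setminus t)\cap[\min t,\infty)$ is nonempty. I would prove this by a short case analysis on $m:=\min\bigl((s\setminus t)\cup(t\setminus s)\bigr)$ (which exists as $s\neq t$): $m$ lies in exactly one of the two difference sets and is at least the minimum of the set containing it, which settles the case where $s$ and $t$ are $\subseteq$-incomparable; when one is a proper subset of the other the witness is extracted by comparing the larger set with the minimum of the smaller.

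The delicate point — and the reason the naive construction of \Cref{prop:rrt-omega-2-sw-fs-omega} does not transfer verbatim — is the progressivity constraint, which forces the chosen witness to sit above $\min s$. The one configuration that resists the argument above is $t\subsetneq s$ (or $s\subsetneq t$) with all the extra elements of the larger set lying strictly below the minimum of the smaller one; this is excluded whenever $B$ is a barrier by clause~(3) of the definition of barrier, and in particular for the Schreier barrier, which is the only case needed for the cone-avoidance results that follow. Once the definition of $g$ is fixed, $k$-boundedness, progressivity, and $f'$-computability are all routine, so this case analysis is the main work.
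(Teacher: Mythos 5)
Your construction is essentially the paper's, up to a cosmetic difference. The paper also uses the jump to list the (at most $k-1$) other members of each colour class, and defines a pointer colouring whose values are minima of set differences; it only takes witnesses one-sidedly, setting $g(s)=\{\min(t\setminus s) : f(t)=f(s),\ s<_\lex t\}$ and using the observation that $s<_\lex t$ implies $\min(t\setminus s)\geq\min s$ to get progressivity, whereas you take witnesses from all of $T_s$ and enforce progressivity by truncating at $\min s$. The $k$-constraint, the $f'$-computability, and the verification for $\subseteq$-incomparable pairs are the same in both write-ups. In fact your two-sided version handles strictly more nested pairs than the paper's: for $\{2,4\}\subsetneq\{2,3,4\}$ coloured alike, your $g(\{2,4\})$ contains $3$, while the paper's $g$ produces no witness at all, since the superset is the lexicographically smaller set and its "difference" with the subset is empty.

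Where you diverge is the residual nested configuration ($t\subsetneq s$ with $s\setminus t$ entirely below $\min t$), which you discharge only by assuming $B$ is a barrier. Be aware that this does not cover the statement as the paper formulates and uses it: the proposition is stated for arbitrary $B\subseteq[\NN]^{<\omega}$, and the corollary drawn from it concerns $\RRT^{\leq !\omega}_k$, i.e.\ $B=[\NN]^{\leq !\omega}$, which is not subset-free and contains exactly such pairs (e.g.\ $\{3,4\}\subsetneq\{2,3,4\}$); so your remark that the Schreier barrier is "the only case needed" is not accurate for the paper as written, and your proof yields the barrier case (hence $\RRT^{!\omega}_k$) rather than the displayed generality. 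On the other hand, the obstruction you isolated is genuine and not a defect of your particular $g$: for $B=\{\{n\} : n\geq 1\}\cup\{\{0,n\} : n\geq 1\}$ with $f(\{n\})=f(\{0,n\})=n$, no progressive colouring with constantly bounded values can satisfy the conclusion, since applying \Cref{prop:fs-n-k-strong-cone-avoidance} (in dimension $1$) to $n\mapsto\bigl(g(\{n\})\cup g(\{0,n\})\bigr)\setminus\{n\}$ yields an infinite $X$ such that $\{0\}\cup X$ is $g$-free but not an $f$-rainbow. So some hypothesis excluding nested members is needed for the statement itself; the paper's own proof is silent on this case (its witness $\min(t\setminus s)$ is undefined precisely when the lex-smaller set contains the lex-larger one), so your treatment of the verification is the more careful of the two, at the price of proving a narrower statement, and the general-$B$ claim should not be asserted as you (correctly) refrain from doing.
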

\begin{proof}
Let $\leq_{\lex}$ be the lexicographic ordering on $B$ (seen as finite increasing sequences over $\NN^{<\omega}$). 
In particular, if $s <_{\lex} t$, then $\min s \leq \min t$, so, as $t \not \subseteq s$ by definition of a barrier, $t \setminus s$ is not empty and $\min(t \setminus s) \geq \min s$.

Let $g(s) = \{\min(t \setminus s) : f(t) = f(s) \wedge s <_{\lex} t\}$. The coloring $g$ is progressive and $k$-constrained. Note that $g$ is computable in the Turing jump of~$f$, as one needs to make an unbounded search for all $t >_{\lex} s$.

We claim that every infinite $g$-free set~$H$ is an $f$-rainbow. Suppose by way of contradiction that there are some distinct~$s, t \in [H]^{<\omega} \cap B$ such that $f(s) = f(t)$. One can suppose without loss of generality that $s <_{\lex} t$. Then $\min(t \setminus s) \in (g(s) \cap H) \setminus s$, contradicting the $g$-freeness of~$H$.
\end{proof}



The following proposition shows the existence of a computable barrier of order type $\omega^\omega$ for which the Progressive Free Set theorem does not admit cone avoidance. In particular, this barrier is not computably $\omega$-bounded, as we shall prove that the Progressive Free Set theorem for computable barriers which are computably $\omega$-bounded admits strong cone avoidance.

\begin{proposition}\label[proposition]{prop:computable-barrier-pfs-computes-zp}
    There exists a computable barrier $B$ of order type $\omega^{\omega}$ and a computable progressive coloring $f : B \to \NN$ such that every $f$-free set computes~$\emptyset'$.
\end{proposition}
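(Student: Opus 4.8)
The plan is to mimic the construction of \Cref{prop:rrt-ot-omega-omega-code-jump}, but to produce directly a \emph{progressive free-set} instance rather than a rainbow instance. Feeding the $2$-bounded rainbow coloring of \Cref{prop:rrt-ot-omega-omega-code-jump} into \Cref{rrt-omega-progressive-fs-omega} would only yield an $\emptyset'$-computable progressive coloring, which is not enough; instead we build the instance by hand, folding the thin-set colorings $h_n$ of \Cref{lem:uniform-computable-thin-coloring} into the coloring itself. (It happens that for that particular rainbow coloring the $\leq_{\lex}$-successor partner of each $s$ is computable by inspection of its definition, so \Cref{rrt-omega-progressive-fs-omega} does produce a computable coloring in this case; the self-contained construction below avoids relying on this.)

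Let $g$ be the modulus of $\emptyset'$ with a non-decreasing left-c.e. approximation $(g_n)_{n\in\NN}$; we may assume $g_n(x)\geq 2$ for all $n,x$ (replace $g_n$ and $g$ by $\max(\cdot,2)$, which affects neither well-definedness below nor the fact that a function dominating $g$ computes $\emptyset'$). Let $B$ be the barrier of \Cref{prop:rrt-ot-omega-omega-code-jump}: for $x<y<\min s'$, put $x\cdot y\cdot s'\in B$ iff $|s'|=(g_{\min s'}(x))^2$. As shown there, $B$ is a computable barrier with base $\NN$, and since $g$ is unbounded (because $\emptyset'$ is not computable) its order type is exactly $\omega^\omega$. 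Fix the computable bijection $k:\NN\to\{(u,v)\in\NN^2:u>v\}$ listing these pairs lexicographically and, for each $n\geq 2$, the computable instance $h_n:[\NN]^{n+1}\to n$ of $\TS^{n+1}_n$ from \Cref{lem:uniform-computable-thin-coloring}, so every infinite $h_n$-thin set uniformly computes $\emptyset'$ via a function dominating $g$. Define $f:B\to\NN$ as follows: given $s=x\cdot y\cdot s'\in B$, compute $(u,v)=k(h_{|s'|-1}(s'))$; if $v=y-x$ and $y<x+u<\min s'$, set $f(s)=x+u$; otherwise set $f(s)=\min s$. Then $f$ is computable, and $f(s)\geq\min s$ always, so $f$ is a progressive coloring.

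It remains to show that every infinite $f$-free set $H$ computes $\emptyset'$. For $x\in H$ pick $b_x$ with $b_x>x+g(x)$ and $g_m(x)=g(x)$ for all $m\geq b_x$. First case: for some $x\in H$ the set $H\setminus[0,b_x]$ (a final segment of $H$) is $h_{g(x)^2-1}$-thin; then, since every infinite $h_{g(x)^2-1}$-thin set computes $\emptyset'$ via a function dominating $g$, and $H$ reads off from the enumeration of $H\setminus[0,b_x]$ a function eventually dominating $g$, we get $H\geq_T\emptyset'$. Second case: for every $x\in H$, $H\setminus[0,b_x]$ is not $h_{g(x)^2-1}$-thin. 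Fix $x\in H$ and let $x<x'<x''$ be the next two elements of $H$; the claim is $x''\geq x+g(x)$. If not, put $v=x'-x$ and $u=x''-x$, so $0<v<u<g(x)$; a short computation gives $k^{-1}(u,v)=\binom{u}{2}+v\leq g(x)^2-2$, so by non-thinness there is $s'\subseteq H\setminus[0,b_x]$ with $|s'|=g(x)^2=(g_{\min s'}(x))^2$ and $h_{g(x)^2-1}(s')=k^{-1}(u,v)$. Since $x<x'<\min s'$, the set $s=x\cdot x'\cdot s'$ lies in $B\cap[H]^{<\omega}$, and unwinding the definition of $f$ gives $f(s)=x+u=x''$, which lies in $H\setminus s$ — contradicting $f$-freeness of $H$. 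Hence $g(x)\leq x''-x$ for all $x\in H$, so $H$ computes a function dominating $g$ and therefore $\emptyset'$.

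The main obstacle has largely been absorbed into \Cref{prop:rrt-ot-omega-omega-code-jump} already; what remains delicate here is the bookkeeping in the second case: one must guarantee that the realizing set $s'$ sits entirely above $x''$, so that $s=x\cdot x'\cdot s'$ is genuinely a member of $B$, is contained in $H$, and has $f(s)$ exactly equal to $x''$ rather than to $\min s$ — which is precisely what the requirement $b_x>x+g(x)$ secures. A secondary point is the passage, in the first case, from ``eventually dominates the modulus of $\emptyset'$'' to ``computes $\emptyset'$'', together with the observation that $H$ does compute such a function even though $b_x$ itself need not be $H$-computable. Apart from these, the argument is a routine transcription of \Cref{prop:rrt-ot-omega-omega-code-jump} with the rainbow dichotomy replaced by the free-set one.
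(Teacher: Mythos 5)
Your proof is correct and follows essentially the same strategy as the paper's: a computable barrier whose block length is governed by a left-c.e.\ approximation to the modulus of $\emptyset'$, with the thin-set colorings $h_n$ of \Cref{lem:uniform-computable-thin-coloring} folded into the coloring, and the thin/non-thin dichotomy turning $f$-freeness into domination of the modulus. The only difference is cosmetic: the paper builds a simpler barrier with a single leading element and block size $g_{\min s}(x)$, coloring $f(x,s) = x + h_{|s|-1}(s) + 1$, which makes the pairing function $k$ and the squared block size (inherited from \Cref{prop:rrt-ot-omega-omega-code-jump}) unnecessary.
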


\begin{proof}
Let $g : \NN \to \NN$ be the modulus of $\emptyset'$ and let $(g_n)_{n \in \NN}$ be a left-c.e. approximation of $g$. Without loss of generality, it can be assumed that $g_{n}(x) \geq 1$ for every $x,n \in \NN$ and that $g_n$ is non-decreasing for each~$n \in \NN$.

Let $B$ be defined as follows: for $x \in \NN$ and $s \subseteq \NN$ with $x < \min s$, let $x \cdot s \in B$ if and only if $|s| = g_{\min s}(x)$. 


We claim that $B$ is a barrier with base $\NN$: for any infinite subset $X = \{x_0, x_1, \dots\}$ of $\NN$, we have $\{x_0, x_1, \dots, x_{g_{x_1}(x_0)}\}\in B$ (since $g_{x_1}(x_0)\geq 1$). Let $x \cdot s, y \cdot t$ be in $B$ and suppose $x\cdot s \subseteq y \cdot t$.
Then $x \geq y$ and $\min s \geq \min t$. Moreover $|s| = g_{\min s}(x) \geq g_{\min s}(y)\geq g_{\min t}(y) = |t|$. Thus $s = t$ and $x=y$. By \Cref{barrier-omega-bounded-order type}, the order type of~$B$ is $\omega^{\omega}$.

Let $f : B \to \NN$ be defined by $f(x \cdot s) = x + h_{|s| - 1}(s) + 1$ where $h_n : [\NN]^{n+1} \to n$ is the computable instance of $\TS^{n+1}_n$ obtained in \Cref{lem:uniform-computable-thin-coloring} such that every infinite $h_n$-thin set computes $\emptyset'$. 
Let $H$ be an infinite $f$-free set. For every $x \in H$, there exists some bound $b_x > g(x)$ such that $g_{b_x}(x) = g(x)$. For every $s \subseteq H \setminus [0,b_x]$ of cardinality $g(x)$, we have $f(x \cdot s) = x + h_{g(x) - 1}(s) + 1$. There are two cases:
\smallskip

\textbf{Case 1:} $H \setminus [0,b_x]$ is $h_{g(x) - 1}$-thin for some $x \in H$, in that case, by definition of $h_{g(x) - 1}$, $H \geq_T \emptyset'$.
\smallskip

\textbf{Case 2:} $H \setminus [0,b_x]$ is not $h_{g(x)}$-thin for every $x \in H$, in that case, for every $x \in H$ and every $y \in (x,x + g(x))$, there exists some $s \subseteq H \setminus [0,b_x]$ such that $f(x \cdot s) = y$ and therefore, since $H$ is $f$-free, $y$ is not in $H$. So $H \geq_T \emptyset'$ as the principal function of $H$ dominates $g$.
\end{proof}

We now proceed to establish that the $\PFS^{\leq h(\cdot)}_h$ admits strong cone avoidance for every computable function $h : \NN \to \NN$. Note that $h$ needs to be computable since by \Cref{prop:computable-barrier-pfs-computes-zp}, letting $h$ be the function such that $B \subseteq [\NN]^{\leq h(\cdot)}$, $\PFS^{\leq h(\cdot)}_1$ does not admit cone avoidance. 

\begin{theorem}\label[theorem]{thm:pfs-omega-strong-cone-avoidance}
Fix a set~$Z$, a non-$Z$-computable set~$D$ and a $Z$-computable function $h : \NN \to \NN$.
For every $h$-constrained progressive coloring $f : [\NN]^{\leq h(\cdot)} \to [\NN]^{<\omega}$, there exists an infinite $f$-free set $G \subseteq \NN$ such that $D \not \leq_T G \oplus Z$.
\end{theorem}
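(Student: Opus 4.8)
The plan is to establish $\PFS^{\leq h(\cdot)}_h$ via strong cone avoidance by a forcing argument that builds the $f$-free set $G$ as an increasing union of finite conditions, working relative to~$Z$ and diagonalizing against all potential Turing reductions $\Phi_e^{G\oplus Z} = D$. The conditions should be pairs $(\sigma, X)$ where $\sigma$ is a finite initial segment of the eventual $G$, $X$ is an infinite set of candidate elements, and one maintains the invariant that every element we are still allowed to add avoids creating an $f$-witness against freeness. The progressive $h$-constrained hypothesis is exactly what keeps this manageable: when we add a new element $x$ with $\min(\text{current set}) \le x$, the only new constraints come from sets $s \in [\sigma \cup \{x\}]^{\leq h(\cdot)}$ containing $x$, and for each such $s$ we must forbid $f(s) \setminus s$ from later entering~$G$; since $f$ is progressive, $\min f(s) \ge \min s$, so these forbidden elements lie above $\min s$ and only finitely many (at most $h(\min s)$ for each $s$, and only finitely many relevant~$s$ once $\min\sigma$ is fixed) need to be removed from the reservoir~$X$ at each step. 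The key structural point is that after fixing the minimum $m = \min\sigma$, the number of subsets $s$ of any finite extension with $\min s = m$ that lie in the domain is bounded by a function of $m$ alone (roughly $2^{h(m)}$), so the reservoir shrinks only finitely at each stage and stays infinite.

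The core of the argument is the forcing-the-jump / diagonalization step. Given a condition $(\sigma, X)$ and a requirement $\mathcal{R}_e$ asking that $\Phi_e^{G\oplus Z} \ne D$, I would ask whether there exist two finite extensions $\tau_0, \tau_1$ of $\sigma$ inside $X$ (respecting the freeness constraints) and some $n$ such that $\Phi_e^{\tau_0 \oplus Z}(n)\converge = 0$ and $\Phi_e^{\tau_1 \oplus Z}(n)\converge = 1$. If so, one of the two values disagrees with $D(n)$, and we extend by the corresponding $\tau_i$, permanently satisfying $\mathcal{R}_e$. If not, then the set of pairs $(n, j)$ such that some valid extension $\tau$ of $\sigma$ forces $\Phi_e^{\tau \oplus Z}(n)\converge = j$ is $Z$-computable (using that $X$ and the freeness constraints are $Z$-computably presented at that stage), and it is single-valued on its domain; hence it gives a $Z$-computable partial function that agrees with $D$ wherever $\Phi_e^{G\oplus Z}$ converges along any extension, so if $\Phi_e^{G\oplus Z}$ were total and equal to $D$ we would get $D \leq_T Z$, a contradiction. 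Either way $\mathcal{R}_e$ is met, and iterating over all $e$ produces $G$ with $D \not\leq_T G \oplus Z$; ensuring $G$ is infinite is automatic since at each stage the reservoir is infinite and we may always adjoin its least element.

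The main obstacle I anticipate is the \emph{extendibility bookkeeping}: one must verify that the class of ``valid'' finite extensions inside the reservoir is always nonempty and rich enough to carry out the two-fork search, i.e.\ that the freeness constraints accumulated so far never strangle the construction. This is where the $h$-boundedness of the domain is essential — because every $s \in [\NN]^{\leq h(\cdot)}$ meeting $\sigma$ has bounded size and the relevant $s$ with small minimum are finite in number, the accumulated forbidden set is, at any finite stage, a finite subset of $\NN$, so cofinitely many elements of any infinite reservoir remain addable. I would phrase this as a lemma: for a condition $(\sigma, X)$ with $X$ infinite and $\min X > \max\sigma$ (or $>\min\sigma$, handling the minimum element specially), and for any $m$, there is a valid extension of $\sigma$ by $m$ fresh elements of $X$ together with a new infinite reservoir $X' \subseteq X$. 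Granting that lemma, the rest is the standard cone-avoidance forcing skeleton. A secondary subtlety is handling the very first element (which sets $\min G$ and hence fixes $h(\min G)$ and the whole constraint budget); one simply treats the choice of $\min G$ as part of the first condition and notes that any choice works, since larger $\min G$ only enlarges, but keeps finite, the per-stage constraint count.
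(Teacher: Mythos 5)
There is a genuine gap, and it sits exactly where the real work of the paper's proof lies. Your bookkeeping only tracks constraints coming from sets $s$ lying entirely inside the current finite stem $\sigma \cup \{x\}$, for which you forbid the finitely many elements of $f(s)\setminus s$. But freeness of the final set $G$ also requires handling the sets that \emph{straddle} the stem and the reservoir: sets $s = t \cup u$ with $t \subseteq \sigma$ nonempty and $u$ a tuple of future elements of $X$, and sets whose minimum lies in a block you are about to add with tail in $X$. For these, $f(s)$ is not determined at the time you commit to the stem; as $u$ ranges over $[X]^{n}$ the values $f(t \cup u)$ form an infinite family, which can hit infinitely many reservoir elements (so no finite deletion from $X$ suffices) and, worse, can hit elements already placed in the stem (which you cannot ``forbid'' at all). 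This is why the paper's conditions carry property (a) and property (b), why Lemma~\ref{lem:fs-preserving-a} must thin the reservoir by finitely many applications of Wang's strong cone avoidance of $\FS^n$, and why Lemma~\ref{lem:fs-stabilizing} must first stabilize the possible traces $f(t,u)\cap\sigma$ using strong cone avoidance of $\RT^n_{<\infty, C_n}$ (the Catalan bound), so that new blocks are only added when they are free for the resulting limit coloring (Lemma~\ref{lem:fs-preserving-b}). Your claim that ``the accumulated forbidden set is, at any finite stage, a finite subset of $\NN$'' is simply false for the straddling sets, and without the stabilization/limit-coloring machinery the extendibility lemma you hope for does not go through.

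The second gap is in the diagonalization. This is \emph{strong} cone avoidance: the coloring $f$ is arbitrary, not $Z$-computable, and the reservoirs produced by the combinatorial lemmas are not $Z$-computable either. Hence ``the set of pairs $(n,j)$ such that some valid extension forces $\Phi_e^{\tau\oplus Z}(n)\converge = j$'' is not $Z$-computable (nor $X\oplus Z$-computable), because deciding which extensions are valid requires querying $f$ on straddling sets; so your case analysis does not yield $D \leq_T Z$. The paper circumvents this by an over-approximating forcing question: it quantifies over \emph{all} candidate constraint relations $R$ in the effectively compact space $\R_{b_f^{+}}$ rather than over the actual $f$, which by compactness (Lemma~\ref{lem:fs-forcing-question-sigma01}) makes the question $\Sigma^0_1(X)$ and yields the contradiction $D \leq_T X$ against clause (c) of the condition. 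Moreover, forcing the negative outcome requires a mechanism your plain Mathias conditions lack: when the question fails, the paper strengthens the \emph{coloring component} of the condition (replacing $f$ by $\hat f = f \cup (g_R \setminus \sigma)$), thereby committing all future stems to be $g_R$-free and hence permanently blocking $\varphi(G)$. Without the coloring as part of the condition and without the compactness-based question, neither half of your two-fork dichotomy can be carried out.
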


\begin{proof}
For simplicity, we prove the theorem in a non-relativized form. Relativization is straightforward.
Fix a non-computable set~$D$. 
Let $\P$ be the collection of all progressive colorings of type $[\NN]^{\leq h(\cdot)} \to [\NN]^{<\omega}$. For a function $b:\NN\to\NN$ we write $\P_b$ for the class of all $b$-constrained colorings in~$\P$.

Given two colorings $f, g \in \P$, we write $g \leq f$ if for every~$s \in [\NN]^{\leq h(\cdot)}$, $g(s) \supseteq f(s)$.
Note that if $H$ is $g$-free and $g \leq f$, then $H$ is $f$-free.
Given two colorings $f, g \in \P$, let $f \cup g$ be the coloring defined by $(f \cup g)(s) = f(s) \cup g(s)$. The coloring $f \cup g$ is the greatest lower bound of~$f$ and $g$ with respect to $\leq$.

Consider the following notion of forcing:

\begin{definition}
A \emph{condition} is a triple $(f, \sigma, X)$ such that
$f \in \P$ is $b_f$-constrained, for some computable function $b_f : \NN \to \NN$,
$\sigma \in [\NN]^{<\omega}$, $X \subseteq \NN$ is an infinite set such that $\max \sigma < \min X$, and
\begin{enumerate}
    \item[(a)] for every~$s \in [\sigma \cup X]^{\leq h(\cdot)}$ with $\min s \in \sigma$, $f(s) \cap X \subseteq s$.
    \item[(b)] for every~$s \in [\sigma \cup X]^{\leq h(\cdot)}$, $f(s) \cap \sigma \subseteq s$.
    \item[(c)] $D \not \leq_T X$.
\end{enumerate}
A condition $d = (g, \tau, Y)$ \emph{extends} $c = (f, \sigma, X)$ (written $d \leq c$) if
$g \leq f$, $\tau \succeq \sigma$, $Y \subseteq X$ and $\tau \setminus \sigma \subseteq X$.
\end{definition}

The following lemma states that property (a) can be obtained \qt{for free}, that is, by restricting the reservoir, and therefore does not impose any constraint on the stem.
In what follows, fix a coloring $f \in \P$ which is $b_f$-constrained, for some computable function $b_f : \NN \to \NN$.

\begin{lemma}\label[lemma]{lem:fs-preserving-a}
For every $\sigma \in [\NN]^{<\omega}$ and every infinite set~$X$ such that $\max \sigma < \min X$ and $D \not \leq_T X$,
there is an infinite set $Y \subseteq X$ such that $D \not \leq_T Y$ and 
for every~$s \in [\sigma \cup Y]^{\leq h(\cdot)}$ with $\min s \in \sigma$, $f(s) \cap Y \subseteq s$.
\end{lemma}
\begin{proof}
For every~$t \in [\sigma]^{\leq h(\cdot)}$ with $t \neq \emptyset$, let $f_t : [X]^{\leq h(\min t) - |t|} \to [\NN]^{\leq b_f(\min t)}$ be defined for every~$u$ by $f_t(u) = f(t \cdot u)$.
By finitely many successive applications of \Cref{prop:fs-n-k-strong-cone-avoidance},
there exists an infinite subset~$Y \subseteq X$ such that $D \not \leq_T Y$
and such that $Y$ is simultaneously $f_t$-free for every~$t \in [\sigma]^{\leq h(\cdot)}$ with $t \neq \emptyset$.

We claim that $Y$ is our desired set. Indeed, for every~$s \in [\sigma \cup Y]^{\leq h(\cdot)}$ with $\min s \in \sigma$, letting $t = s \cap \sigma$ and $u = s \cap Y$, we have $f(s) = f_t(u)$, so by $f_t$-freeness of~$Y$, $f(s) = f_t(u) \cap Y \subseteq u \subseteq s$.
\end{proof}

In what follows, recall that $C_n$ stands for the $n$th Catalan number.

\begin{definition}
A set $X$ \emph{stabilizes} $\sigma$ if for every $t \in [\sigma]^{\leq h(\cdot)}$ with $t \neq \emptyset$ and every~$n \leq h(\min t) - |t|$,
there is a set $I_{t,n} \subseteq [\sigma]^{\leq b_f(\min t) \times C_n}$ such that for every~$u \in [X]^n$,
$f(t, u) \cap \sigma \subseteq I_{t, n}$.
\end{definition}

\begin{lemma}\label[lemma]{lem:fs-stabilizing}
For every $\sigma \in [\NN]^{<\omega}$ and every infinite set~$X$ such that $\max \sigma < \min X$ and $D \not \leq_T X$,
there is an infinite set $Y \subseteq X$ stabilizing $\sigma$ such that $D \not \leq_T Y$.
\end{lemma}
\begin{proof}
For every~$t \in [\sigma]^{\leq h(\cdot)}$ with $t \neq \emptyset$ and every~$n \leq h(\min t) - |t|$, let $g_{t, n} : [X]^n \to [\sigma]^{\leq b_f(\min t)}$ be defined by $g_{t, n}(u) = f(t \cdot u) \cap \sigma$. By finitely many successive applications of strong cone avoidance of $\RT^n_{<\infty, C_n}$ (see Cholak and Patey~\cite{cholak2020thin}), there is an infinite subset $Y \subseteq X$ such that $D \not \leq_T Y$ and for every~$t \in [\sigma]^{\leq h(\cdot)}$ with $t \neq \emptyset$ and every~$n \leq h(\min t) - |t|$, $|g_{t, n}([Y]^n)| \leq C_n$. Note that here, a color is an element of $[\sigma]^{\leq b_f(\min t)}$ instead of a natural number. Let $I_{t, n} = \bigcup g_{t, n}([Y]^n)$. Then $|I_{t, n}| \leq b_f(\min t) \times C_n$. By definition, for every~$u \in [Y]^n$, $f(t \cdot u) \cap \sigma = g_{t, n}(u) \subseteq I_{t, n}$.
\end{proof}

Given a computable function $b : \NN \to \NN$, let $b ^{+} : \NN \to \NN$ be the computable function defined by  $b^{+}(m) = \sum_{n \leq h(m)} b(m) \times C_n$.

\begin{definition}
Let $X$ be a reservoir stabilizing $[0, k]$. The \emph{limit coloring} is the function $g_{k,X} : [k]^{\leq h(\cdot)} \to [k]^{<\omega}$ defined by $g_{k,X}(\emptyset) = \emptyset$ and $g_{k,X}(t) = \bigcup_{n \leq h(t) - |t|} I_{t, n}$ otherwise.
\end{definition}

The limit function $g_{k,X}$ is $b_f^{+}$-constrained. Note that if $\rho \subseteq [0, k]$ is $g_{k,X}$-free, then it is $f$-free. Indeed, for every~$t \in [0, k]^{\leq h(\cdot)}$, $g_{k,X}(t) = I_{t, 0}$ and by definition of stability for~$n = 0$, $f(t) \cap [0, k] \subseteq I_{t, 0}$.

The following lemma is the core combinatorial lemma which specifies the conditions under which a block of elements~$\rho$ can be added to the stem while preserving the property~(b).

\begin{lemma}\label[lemma]{lem:fs-preserving-b}
Let $(f, \sigma, X)$ be a condition
and $Y \subseteq X$ be an infinite set stabilizing $[0, k]$ for some~$k \in \NN$ and let $g_{k,Y} : [k]^{\leq h(\cdot)} \to [k]^{<\omega}$ be the limit coloring.
Let $\rho \subseteq X \uh_k$ be a finite $g_{k,Y}$-free set. Then $(f, \sigma \cup \rho, Y)$
satisfies property~(b).
\end{lemma}
\begin{proof}
Fix some~$s \in [\sigma \cup \rho \cup Y]^{\leq h(\cdot)}$. We have multiple cases.
\begin{itemize}
    \item Case 1: $s \cap \sigma \neq \emptyset$. Then by properties (a) and (b) of $(f, \sigma, X)$, $f(s) \cap (\sigma \cup X) \subseteq s$. Since $(\sigma \cup \rho) \subseteq (\sigma \cup X)$, then $f(s) \cap (\sigma \cup \rho) \subseteq s$.
    \item Case 2: $s \cap \sigma = \emptyset$ but $s \cap \rho \neq \emptyset$. By property (b) of $(f, \sigma, X)$, $f(s) \cap \sigma \subseteq s$. Let $t = s \cap \rho$ and $u = s \cap Y$. Since $\rho$ is $g_{k,Y}$-free, $g_{k,Y}(t) \cap \rho \subseteq t$. By definition of $g_{k,Y}(t)$, $I_{t, |u|} \subseteq g_{k,Y}(t)$, so $I_{t, |u|} \cap \rho \subseteq t$. In particular, $f(s) = f(t, u) \in I_{t, |u|}$, so $f(s) \cap \rho \subseteq t \subseteq s$. Thus, $f(s) \cap (\sigma \cup \rho) \subseteq s$.
    \item Case 3: $s \cap (\sigma \cup \rho) = \emptyset$. Then $s \subseteq Y$. Since $\min Y > \max (\sigma \cup \rho)$, then by progressiveness of~$f$, $f(s) \cap (\sigma \cup \rho) = \emptyset$.
\end{itemize}
\end{proof}

One can combine \Cref{lem:fs-preserving-a,lem:fs-stabilizing,lem:fs-preserving-b} to obtain an extensibility lemma, saying that every sufficiently generic filter induces an infinite set. 

\begin{lemma}\label[lemma]{lem:fs-extendibility}
Let $(f, \sigma, X)$ be a condition. There is an extension $(f, \tau, Y) \leq (f, \sigma, X)$ such that $|\tau| > |\sigma|$.
\end{lemma}
\begin{proof}
Let $x = \min X$. By \Cref{lem:fs-stabilizing}, there is an infinite subset $Y_0 \subseteq X$ stabilizing $[0, x]$ such that $D \not \leq_T Y_0$. Let $g$ be the limit function. By \Cref{lem:fs-preserving-a}, there is an infinite subset $Y \subseteq Y_0$ such that $(f, \sigma \cup \{x\}, Y)$ satisfies property (a).
By \Cref{lem:fs-preserving-b}, $\{x\}$ being vacuously $g$-free, $(f, \sigma \cup \{x\}, Y)$ satisfies property (b). Thus, $(f, \sigma \cup \{x\}, Y)$ is a valid extension.
\end{proof}

Given a computable function $b : \NN \to \NN$, the space $\P_b$ is not compact.
However, $\P_b$ is in one-to-one correspondence with the effectively compact space $\R_b$ of all relations $R \subseteq [\NN]^{\leq h(\cdot)} \times \NN$ such that for every~$s \in [\NN]^{\leq h(\cdot)}$, $|\{ y : (s, y) \in R \}| \leq b(\min s)$ and for every~$(s, y) \in R$, $y \geq \min s$. Indeed, given a function $g \in \P_b$, one can define the relation $R_g \in \R_b$ defined by $R_g = \{ (s, y) : y \in g(s) \}$ and given a relation $R \in \R_b$, the function $g_R \in \P_b$ is defined by $g_R(s) = \{ y : (s, y) \in R \}$.

Note that the map $g \mapsto R_g$ is computable, but the map $R \mapsto g_R$ is not even continuous. Thankfully, there is a Turing functional which, given $R$ and a finite set~$\rho$, decides whether $\rho$ is $g_R$-free or not. Indeed, to decide whether $\rho$ is $g_R$-free, one does not need to know $g_R$ restricted to $[\rho]^{\leq h(\cdot)}$, only to know $R$ restricted to $[\rho]^{\leq h(\cdot)} \times \rho$.

We are now ready to define the forcing question:

\begin{definition}
Let $(f, \sigma, X)$ be a condition and $\varphi(G)$ be a $\Sigma^0_1$-formula.
Let $(f, \sigma, X) \qvdash \varphi(G)$ hold if and only if for every relation $R \in \R_{b_f^{+}}$, there is a finite $g_R$-free set $\rho \subseteq X$ such that $\varphi(\sigma \cup \rho)$ holds.
\end{definition}

The previous formulation of the forcing question is $\Pi^1_1(X)$ as it starts with a second-order universal quantification. However, thanks to the effective compactness of the space $\R_{b_f^{+}}$, it is equivalent to a $\Sigma^0_1(X)$-formula:

\begin{lemma}\label[lemma]{lem:fs-forcing-question-sigma01}
Let $(f, \sigma, X)$ be a condition and $\varphi(G)$ be a $\Sigma^0_1$-formula.
Then $(f, \sigma, X) \qvdash \varphi(G)$ if and only if there is some~$k \in \omega$ such that for every $b_f^{+}$-constrained progressive function $g : [0, k]^{\leq h(\cdot)} \to [0, k]^{<\omega}$, there is a finite $g$-free set $\rho \subseteq X \uh_k$ such that $\varphi(\sigma \cup \rho)$ holds.
\end{lemma}
\begin{proof}
Suppose first that there is some~$k \in \NN$ such that for every $b_f^{+}$-constrained progressive function $g : [0, k]^{\leq h(\cdot)} \to [0, k]^{<\omega}$, there is a finite $g$-free set $\rho \subseteq X \uh_k$ such that $\varphi(\sigma \cup \rho)$ holds. Let $R \in \R_{b_f^+}$ be a relation,
and let $g_R \in \P_{b_f^+}$ be the corresponding function. Then, letting $g : [0, k]^{\leq h(\cdot)} \to [0, k]^{<\omega}$ be defined by $g(s) = g_R(s) \cap [0, k]$, the function $g$ is $b_f^{+}$-constrained and progressive, so there is some finite $g$-free set $\rho \subseteq X \uh_k$ such that $\varphi(\sigma \cup \rho)$ holds. In particular, $\rho$ is $g_R$-free. Since there is such a $\rho$ for every $R \in \R_{b_f^+}$, then $(f, \sigma, X) \qvdash \varphi(G)$ holds.

Suppose now that for every~$k \in \NN$, there is a $b_f^{+}$-constrained progressive function $g_k : [0, k]^{\leq h(\cdot)} \to [0, k]^{<\omega}$ such that for every $g_k$-free set $\rho \subseteq X \uh_k$, $\varphi(\sigma \cup \rho)$ does not hold. Let $\T$ be the tree which, at level~$k$, contains all such functions $g_k$, and which is ordered by the function extension relation. The tree $\T$ is finitely branching, so by K\"onig's lemma, there is an infinite path $g \in \T$. This path is a function $g \in \P_{b_f^+}$ such that for every finite $g$-free set $\rho \subseteq X$, $\varphi(\sigma \cup \rho)$ does not hold. Then the relation $R_g$ witnesses that $(f, \sigma, X) \qvdash \varphi(G)$ does not hold.
\end{proof}

Any filter $\F$ for this notion of forcing induces a set 
$$
G_\F = \bigcup \{ \sigma : (g, \sigma, X) \in \F \}
$$
We say that a condition $p$ \emph{forces} a formula $\varphi(G)$ if $\varphi(G_\F)$ holds for every sufficiently generic filter~$\F$ containing~$p$.
The following lemma states that the forcing question meets its specification.

\begin{lemma}\label[lemma]{lem:fs-forcing-question-spec}
Let $p = (f, \sigma, X)$ be a condition and $\varphi(G)$ be a $\Sigma^0_1$-formula.
\begin{enumerate}
    \item[1.] If $p \qvdash \varphi(G)$, then there is an extension of~$p$ forcing $\varphi(G)$.
    \item[2.] If $p \nqvdash \varphi(G)$, then there is an extension of~$p$ forcing $\neg \varphi(G)$.
\end{enumerate}
\end{lemma}
\begin{proof}
Suppose first $p \qvdash \varphi(G)$ holds. By \Cref{lem:fs-forcing-question-sigma01}, there is some~$k \in \NN$ such that for every $b_f^{+}$-constrained progressive function $g : [0, k]^{\leq h(\cdot)} \to [0, k]^{<\omega}$, there is a finite $g$-free set $\rho \subseteq X$ such that $\varphi(\sigma \cup \rho)$ holds. By \Cref{lem:fs-stabilizing}, there is an infinite subset $Y_0 \subseteq X$ stabilizing $[0, k]$ and such that $D \not \leq_T Y_0$. Let $g_{k,Y_0} : [0, k]^{\leq h(\cdot)} \to [0, k]^{<\omega}$ be the limit function. Note that $g_{k,Y_0}$ is $b_f^{+}$-constrained and progressive, so there is a finite $g_{k,Y_0}$-free set $\rho \subseteq X$ such that $\varphi(\sigma \cup \rho)$ holds.
By \Cref{lem:fs-preserving-b}, $(f, \sigma \cup \rho, Y_0)$ satisfies (b).
By \Cref{lem:fs-preserving-a}, there is an infinite subset $Y \subseteq Y_0$ such that $(f, \sigma \cup \rho, Y)$ satisfies (a) and $D \not \leq_T Y$. Thus $(f, \sigma \cup \rho, Y)$ is a valid extension of~$p$. By choice of~$\rho$, it forces $\varphi(G)$.

Suppose $p \nqvdash \varphi(G)$ holds. Then there is a relation $R \in \R_{b_f^{+}}$ such that for every finite $g_R$-free set $\rho \subseteq X$, $\varphi(\sigma \cup \rho)$ does not hold. Let $g_R \in \P_{b_f^{+}}$ be the corresponding function. Let $\hat{f} : [\NN]^{\leq h(\cdot)} \to [\NN]^{<\omega}$ be defined by $\hat{f}(s) = f(s) \cup (g_R(s) \setminus \sigma)$. Note that $\hat{f} \leq f$, and every $\hat{f}$-free subset $\rho \subseteq X$ is $g_R$-free. Moreover, $\hat{f}$ is $(b_f + b_f^{+})$-constrained and progressive. Since $(f, \sigma, X)$ satisfies (b) and for every $s \in [\sigma \cup X]^{\leq h(\cdot)}$, $\hat{f}(s)\cap \sigma\subseteq f(s)\cap \sigma\subseteq s$, then $(\hat{f}, \sigma, X)$ satisfies (b). By \Cref{lem:fs-preserving-a}, there is an infinite subset $Y \subseteq X$ such that $D \not \leq_T Y$ and $(\hat{f}, \sigma, Y)$ satisfies (a). Thus $q = (\hat{f}, \sigma, Y)$ is a valid extension of~$p$. 

We claim that $q$ forces $\neg \varphi(G)$. Indeed, suppose there is an extension $(\tilde{f}, \tau, Z) \leq q$ such that $\varphi(\tau)$ holds. Then by property (b) of  $(\tilde{f}, \tau, Z)$, $\tau$ is $\tilde{f}$-free, and since $\tilde{f} \leq \hat{f}$, $\tau$ is $\hat{f}$-free. Let $\rho = \tau \setminus \sigma$. By definition of~$\hat{f}$, $\rho$ is a $g_R$-free subset of~$X$, contradicting our choice of~$g_R$.
\end{proof}

We can now prove our diagonalization lemma.

\begin{lemma}\label[lemma]{lem:fs-diagonalization}
Let $p = (f, \sigma, X)$ be a condition and $\Phi_e$ be a Turing functional.
There is an extension of~$p$ forcing $\Phi_e^G \neq D$.
\end{lemma}
\begin{proof}
Let $U = \{ (x, v) \in \NN \times 2 : p \qvdash \Phi^G_e(x) \downarrow = v \}$. We have three cases:
\begin{itemize}
    \item Case 1: $(x, 1-D(x)) \in U$ for some~$x \in \NN$. By \Cref{lem:fs-forcing-question-spec}, there is an extension of~$p$ forcing $\Phi^G_e(x)\downarrow = 1-D(x)$, hence forcing $\Phi_e^G \neq D$.
    \item Case 2: $(x, D(x)) \not \in U$ for some~$x \in \NN$. By \Cref{lem:fs-forcing-question-spec}, there is an extension of~$p$ forcing $\neg (\Phi^G_e(x)\downarrow = D(x))$, hence forcing $\Phi_e^G \neq D$.
    \item Case 3: $U$ is the graph of the characteristic function of~$D$. By \Cref{lem:fs-forcing-question-sigma01}, the set $U$ is $\Sigma^0_1(X)$, so $D \leq_T X$, contradiction.
\end{itemize}
\end{proof}

We are now ready to prove \Cref{thm:pfs-omega-strong-cone-avoidance}.
Let $f : [\NN]^{\leq h(\cdot)} \to [\NN]^{<\omega}$ be an $h$-constrained, progressive coloring, for a computable function $h : \NN \to \NN$.
Let $\F$ be a sufficiently generic filter containing $(f, \emptyset, \NN)$,
and let $G_\F = \bigcup \{ \sigma : (g, \sigma, X) \in \F \}$. By definition of a forcing condition, $G_\F$ is $f$-free. By \Cref{lem:fs-extendibility}, $G_\F$ is infinite, and by \Cref{lem:fs-diagonalization}, $D \not \leq_T G_\F$.
This completes the proof of \Cref{thm:pfs-omega-strong-cone-avoidance}.
\end{proof}

Note that the previous theorem is tight in many senses. First, as mentioned previously, by \Cref{prop:computable-barrier-pfs-computes-zp}, there exists some non-computable function $h$ such that $\PFS^{\leq h(\cdot)}_1$ does not even admit cone avoidance. The following proposition shows that if $b$ is allowed to be non-computable, then $\PFS^{\leq 1}_b$ does not admit strong cone avoidance in general.

\begin{proposition}
There exists a $\emptyset'$-computable function $b : \NN \to \NN$ and a $\emptyset'$-computable $b$-constrained progressive function $f : \NN \to [\NN]^{<\omega}$ such that every infinite $f$-free set computes~$\emptyset'$.
\end{proposition}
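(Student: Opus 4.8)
The plan is to realize, in the elementary setting of colorings of singletons, the same coding idea as in \Cref{prop:computable-barrier-pfs-computes-zp}: force every infinite $f$-free set to grow so fast that its principal function dominates the modulus of $\emptyset'$, which then suffices to compute $\emptyset'$.

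First I would fix a computable enumeration $(\emptyset'_s)_{s \in \NN}$ of $\emptyset'$ and let $m : \NN \to \NN$ be its modulus, that is, $m(x)$ is the least $s$ with $\emptyset'_s \uh x = \emptyset' \uh x$. I will use three standard facts: $m$ is $\emptyset'$-computable; $m$ may be taken non-decreasing with $m(x) \geq 1$ for all $x$; and every function dominating $m$ computes $\emptyset'$ (given $p$ with $p(n) \geq m(n)$ for all $n$, one has $n \in \emptyset' \iff n \in \emptyset'_{p(n+1)}$, by definition of $m(n+1)$). Then I would set $b(x) = m(x)$ and define $f$ on singletons by $f(x) = \{x+1, x+2, \dots, x+m(x)\}$ (and $f(\emptyset) = \emptyset$). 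Both $b$ and $f$ are $\emptyset'$-computable; $f$ is $b$-constrained since $|f(x)| = m(x) = b(x)$; and $f$ is progressive since $\min f(x) = x+1 \geq x$. Hence $f$ is a legitimate $\emptyset'$-computable, $b$-constrained, progressive coloring.

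It then remains to check the coding. Let $H = \{h_0 < h_1 < \cdots\}$ be an infinite $f$-free set. Fix $i$. Freeness applied to the singleton $\{h_i\} \subseteq H$ gives $f(h_i) \cap H \subseteq \{h_i\}$, and since $h_i \notin f(h_i)$ we get $\{h_i + 1, \dots, h_i + m(h_i)\} \cap H = \emptyset$; hence $h_{i+1} > h_i + m(h_i) \geq m(h_i) \geq m(i)$, using $h_i \geq i$ and monotonicity of $m$. Therefore $i \mapsto h_{i+1}$ is an $H$-computable function dominating $m$, so $H \geq_T \emptyset'$. There is no genuine combinatorial obstacle here; the only points requiring care are the bookkeeping that $b$ and $f$ exactly meet the $b$-constrained and progressive requirements, and invoking the form of the modulus lemma that extracts $\emptyset'$ from mere domination.
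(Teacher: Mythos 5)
Your proof is correct and follows essentially the same route as the paper: take $b$ to be the modulus of $\emptyset'$ and let $f(x)$ be an interval just above $x$ of length governed by the modulus (the paper uses $f(x)=[x+1,\dots,b(x)]$, you use $\{x+1,\dots,x+m(x)\}$), so that freeness forces every infinite free set to have gaps dominating the modulus and hence to compute $\emptyset'$. The only difference is cosmetic, and your extra bookkeeping (monotonicity of $m$, $h_i\geq i$, the domination-to-computation step) is a fine elaboration of the same argument.
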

\begin{proof}
Let $b : \NN \to \NN$ be the modulus of $\emptyset'$,
and let $f(x) = [x+1, \dots, b(x)]$. Let $H$ be an infinite $f$-free set.
Then given $x < y \in H$, $y > b(x)$, so one can $H$-compute a function dominating~$b$, hence $H$-compute $\emptyset'$.
\end{proof}

\begin{remark}
The proof of \Cref{thm:pfs-omega-strong-cone-avoidance} can be adapted to prove many other notions of avoidance or preservation. For instance, one can prove that for every computable function $h : \NN \to \NN$, $\PFS^{\leq h(\cdot)}_h$ admits strong PA avoidance (see Liu~\cite{liu2012rt22}), strong constant-bound enumeration avoidance (see Liu~\cite{liu2015cone}) or strong preservation for $k$ hyperimmunities for every~$k \in \NN$ (see Patey~\cite{patey2017iterative}).
In particular, for every computable function $h : \NN \to \NN$ and $k \in \NN$, $\PFS^{\leq h(\cdot)}_h$ does not imply any of $\WKL_0$, $\WWKL_0$, $\RT^2_2$, $\RT^2_{<\infty, k}$ over $\omega$-models.
\end{remark}



\bigskip

\begin{corollary}
For every computably $\omega$-bounded barrier $B \subseteq [\NN]^{<\omega}$ and every~$k \in \NN^+$, $\RRT^B_k$ admits strong cone avoidance. 
\end{corollary}
\begin{proof}
Fix a set $Z$ and let $D$ be a non-$Z$-computable set.

Fix $k \in \NN^+$ and let $B \subseteq [\NN]^{<\omega}$ be a computably $\omega$-bounded barrier, with $h : \NN \to \NN$ the computable bound such that $B \subseteq [\NN]^{\leq h(\cdot)}$. Without loss of generality, we can assume that $h(x) \geq k$ for every $x \in \NN$. Let $f : B \to \NN$ be an instance of $\RRT^{B}_k$ and let $g : B \to [\NN]^{<\omega}$ be the $f'$-computable $k$-constrained (and hence $h$-constrained by our assumption that $g(x) \geq k$ for every $x \in \NN$) progressive coloring obtained in \Cref{rrt-omega-progressive-fs-omega}. 

$g$ can be extended to an $h$-constrained progressive coloring $\tilde{g} : [\NN]^{\leq h(\cdot)} \to [\NN]^{<\omega}$ be letting $\tilde{g}(s) = \emptyset$ for every $s \in  [\NN]^{\leq h(\cdot)} \setminus B$.

Then, by \Cref{thm:pfs-omega-strong-cone-avoidance}, there exists an infinite $\tilde{g}$-free set $G \subseteq \NN$ such that $D \not \leq_T G \oplus Z$. By construction of $g$, $G$ is also an infinite $f$-rainbow.
\end{proof}

\bigskip

\section{Conclusions and perspectives}\label{sec:conclusion}

The analysis of the exactly $\omega$-large counterparts to the Ramsey, Free Set and Thin Set theorems from a computable perspective, gave the exact same tight bound, namely, $\emptyset^{(\omega)}$, translating in reverse mathematical terms by an equivalence with~$\ACA_0^+$. This equivalence is to be put in contrast with the finite-dimensional cases, where $\RT^n_2$ coincides with $\ACA_0$ for $n \geq 3$, while $\FS^n$ and $\TS^n$ both admit strong cone avoidance.
On the other hand, the Rainbow Ramsey theorem for exactly $\omega$-large sets ($\RRT^{!\omega}_k$) still has no coding power, and admits strong cone avoidance. \Cref{fig:implicationsREG} and \Cref{fig:reductions}  summarize the relationship between the studied statements, in Reverse Mathematics and over strong Weihrauch reducibility, respectively.


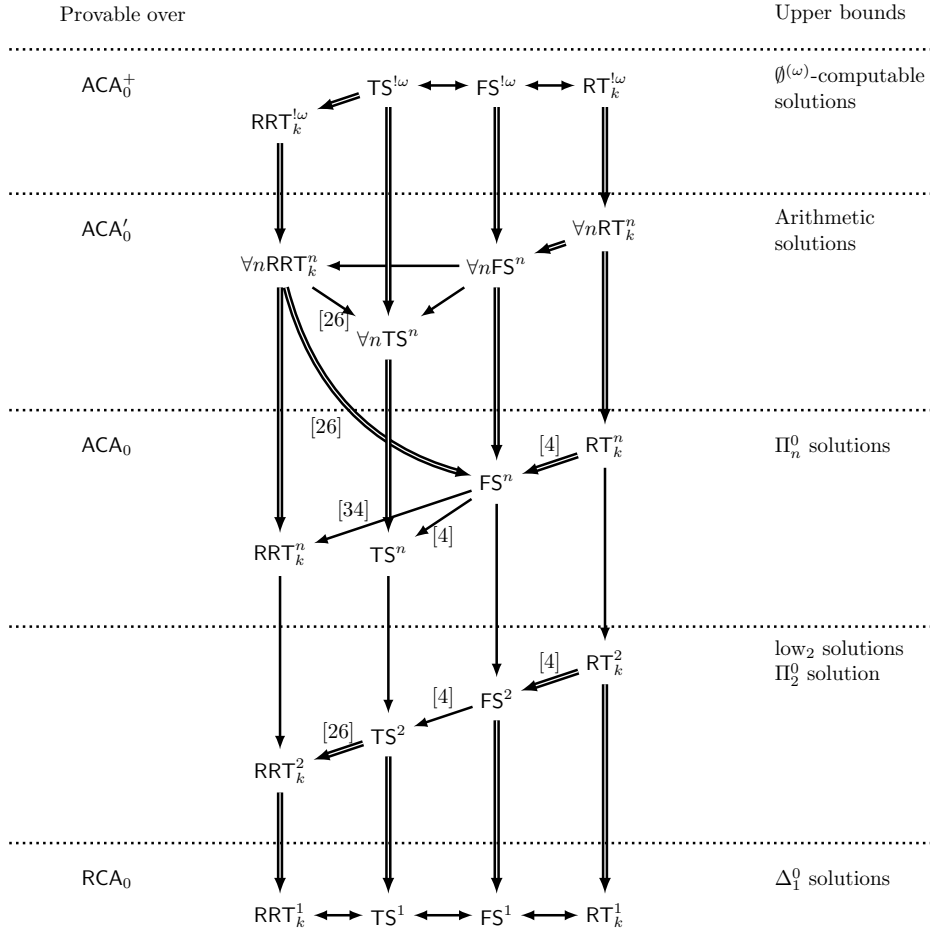
\begin{figure}[htbp]
\begin{center}
\scalebox{.8}{
\begin{tikzpicture}[x=1.8cm, y=1.2cm,
	node/.style={minimum size=2em},
	impl/.style={draw,very thick,-latex},
        strimpl/.style={draw,double,very thick,-latex},
	equiv/.style={draw, very thick, latex-latex},
	caption/.style={text width=3cm}]

        \node[node] (rt1) at (4,-1) {$\RT^1_k$};
        \node[node] (fs1) at (3,-1) {$\FS^1$};
        \node[node] (ts1) at (2,-1) {$\TS^1$};
        \node[node] (rrt1) at (1,-1) {$\RRT^1_k$};

        \node[node] (rt2) at (4,2.5) {$\RT^2_k$};
        \node[node] (fs2) at (3,2) {$\FS^2$};
        \node[node] (ts2) at (2,1.5) {$\TS^2$};
        \node[node] (rrt2) at (1,1) {$\RRT^2_k$};

        \node[node] (rtn) at (4,5.5) {$\RT^n_k$};
        \node[node] (fsn) at (3,5) {$\FS^n$};
        \node[node] (tsn) at (2,4) {$\TS^n$};
        \node[node] (rrtn) at (1,4) {$\RRT^n_k$};

        \node[node] (rtnall) at (4,8.5) {$\forall n\RT^n_k$};
        \node[node] (fsnall) at (3,8) {$\forall n\FS^n$};
        \node[node] (tsnall) at (2,7) {$\forall n\TS^n$};
        \node[node] (rrtnall) at (1,8) {$\forall n\RRT^n_k$};

        \node[node] (rtomega) at (4,10.5) {$\RT^{!\omega}_k$};
        \node[node] (fsomega) at (3,10.5) {$\FS^{!\omega}$};
        \node[node] (tsomega) at (2,10.5) {$\TS^{!\omega}$};
        \node[node] (rrtomega) at (1,10) {$\RRT^{!\omega}_k$};

	\draw[equiv] (rt1) -- (fs1);
        \draw[equiv] (fs1) -- (ts1);
        \draw[equiv] (ts1) -- (rrt1);

        \draw[strimpl] (rt2) -- (fs2) node[midway, above] {\cite{Cholak_Giusto_Hirst_Jockusch_2005}};
        \draw[impl] (fs2) -- (ts2) node[midway, above] {\cite{Cholak_Giusto_Hirst_Jockusch_2005}};
        \draw[strimpl] (ts2) -- (rrt2) node[midway, above] {\cite{patey2015somewhere}};
        
        \draw[strimpl] (rt2) -- (rt1);
        \draw[strimpl] (fs2) -- (fs1);
        \draw[strimpl] (ts2) -- (ts1);
        \draw[strimpl] (rrt2) -- (rrt1);

        \draw[impl] (rtn) -- (rt2);
        \draw[impl] (fsn) -- (fs2);
        \draw[impl] (tsn) -- (ts2);
        \draw[impl] (rrtn) -- (rrt2);

        \draw[strimpl] (rtn) -- (fsn) node[midway, above] {\cite{Cholak_Giusto_Hirst_Jockusch_2005}};
        \draw[impl] (fsn) -- (tsn) node[midway, below] {\cite{Cholak_Giusto_Hirst_Jockusch_2005}};
        \draw[impl] (fsn) -- (rrtn) node[near end, above] {\cite{wang2014some}};

        \draw[strimpl] (rtnall) -- (fsnall);
        \draw[impl] (fsnall) -- (tsnall);
        \draw[impl] (fsnall) -- (rrtnall);
        \draw[impl] (rrtnall) -- (tsnall) node[midway, below] {\cite{patey2015somewhere}};
        \draw[strimpl] (rrtnall) edge[bend right, double] node[midway, below left] {\cite{patey2015somewhere}} (fsn);

        \draw[strimpl] (rtnall) -- (rtn);
        \draw[strimpl] (fsnall) -- (fsn);
        \draw[strimpl] (tsnall) -- (tsn);
        \draw[strimpl] (rrtnall) -- (rrtn);

        \draw[equiv] (rtomega) -- (fsomega);
        \draw[equiv] (fsomega) -- (tsomega);
        \draw[strimpl] (tsomega) -- (rrtomega);

        \draw[strimpl] (rtomega) -- (rtnall);
        \draw[strimpl] (fsomega) -- (fsnall);
        \draw[strimpl] (tsomega) -- (tsnall);
        \draw[strimpl] (rrtomega) -- (rrtnall);

	\draw[very thick, dotted] (-1.5, 0) -- (7,0);
	\node[caption] at (6.4,-0.5) {$\Delta^0_1$ solutions};
        \node[caption] at (0,-0.5) {$\RCA_0$};
	
	\draw[very thick, dotted] (-1.5,3) -- (7,3);
	\node[caption, text width=3cm] at (6.4,2.5) {low${}_2$ solutions\\ $\Pi^0_2$ solution};
	
	\draw[very thick, dotted] (-1.5,6) -- (7,6);
	\node[caption] at (6.4,5.5) {$\Pi^0_n$ solutions};
        \node[caption] at (0,5.5) {$\ACA_0$};

	\draw[very thick, dotted] (-1.5,9) -- (7,9);
	\node[caption] at (6.4,8.5) {Arithmetic\\ solutions};
        \node[caption] at (0,8.5) {$\ACA_0'$};

	\draw[very thick, dotted] (-1.5,11) -- (7,11);
	\node[caption] at (6.4,10.5) {$\emptyset^{(\omega)}$-computable\\ solutions};
        \node[caption] at (0,10.5) {$\ACA_0^+$};

        \node[caption] at (-0.2,11.5) {Provable over};
        \node[caption] at (6.4,11.5) {Upper bounds};
	
\end{tikzpicture}}
\end{center}
\caption{Studied statements from the perspectives of Computability Theory and Reverse Mathematics. A simple (double) arrow represents a (strict) implication over $\RCA_0$.}
\label{fig:implicationsREG}
\end{figure}


\begin{figure}
\centering
\[\begin{tikzcd}[row sep=2em,column sep=2.5em]
\RRT^{!\omega}_k  \arrow[d] & \arrow[l, Rightarrow] \RT^{!\omega}_k \arrow[d, Rightarrow] \arrow[r] &  \RT^{!\omega}_2  \arrow[d, Rightarrow] \arrow[r] &  \FS^{!\omega}  \arrow[dd, Rightarrow] \arrow[r] & \TS^{!\omega} \arrow[ddd, Rightarrow] \\
\RRT^n_k & \arrow[l, Rightarrow] \RT^n_k  \arrow[dr, Rightarrow]   & \RRT^{2n+1}_2 \arrow[dr]  &    &  \\
&   & \RT^n_{2n+2}  \arrow[r] & \FS^n \arrow[d]  &   \\
 & & &\RRT^{n}_2 \arrow[r]  & \TS^{n-1}
\end{tikzcd}\]
\caption{Studied statements from the perspective of Weihrauch analysis. A simple (double) arrow represents a (strict) strong Weihrauch reduction.}\label{fig:reductions}
\end{figure}
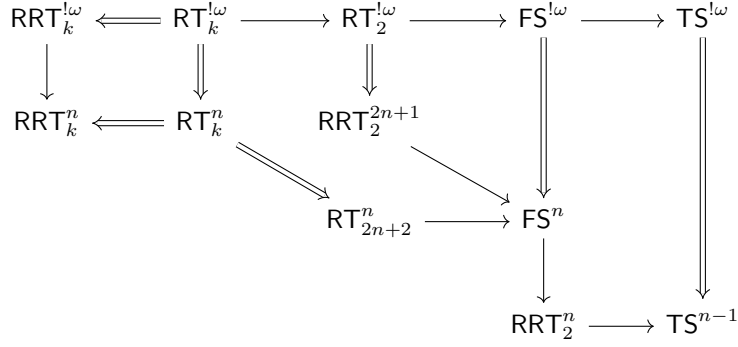

Many questions remain open around the generalization of combinatorial statements to exactly $\omega$-large sets and barriers.


By \Cref{thm:largerrt}, $\RRT^{!\omega}_k \leq_\sW \RT^{!\omega}_k$, so every computable instance of $\RRT^{!\omega}_k$ admits a $\emptyset^{(\omega)}$-computable solution. By \Cref{fs-omega-computes-omega-jump}, here exists a computable instance of $\FS^{!\omega}$ such that every solution computes $\emptyset^{(\omega)}$. It follows that $\RRT^{!\omega}_k$ is computably reducible to $\FS^{!\omega}$ in the sense of Dzhafarov~\cite{dzhafarov2016strong}. We gave a direct combinatorial reduction in \Cref{prop:rrt-omega-2-sw-fs-omega} in the case $k = 2$, and leave the general case open.

\begin{question}
Does $\RRT^{!\omega}_k \leq_\sW \FS^{!\omega}$ for every~$k \in \NN^+$?
\end{question}


While $\RT^{!\omega}, \FS^{!\omega}$ and $\TS^{!\omega}$
code $\emptyset^{(\omega)}$ and are all equivalent to $\ACA_0^+$, we 
don't know of a direct proof of $\RT^{!\omega}_2$ from either $\TS^{!\omega}$ or $\FS^{!\omega}$. More precisely, 
we don't know if $\RT^{!\omega}_2$ is reducible to either $\TS^{!\omega}$ or $\FS^{!\omega}$.

\begin{question}
Does $\RT^{!\omega}_2 \leq_\sW \TS^{!\omega}$? Does $\RT^{!\omega}_2 \leq_\sW \FS^{!\omega}$?
\end{question}

A natural continuation of the line of research of the present paper is to consider the Free Set, Thin Set and Rainbow Ramsey theorems for arbitrary barriers and to inquire into their effective and logical strength. We plan to give in future work a complete layered analysis of the strength of these principles based on the complexity of the barrier. The first and second author have obtained weak anti-basis results relative to the hyperarithmetical hierarchy for the Free Set, Thin Set and Rainbow Ramsey theorems for barriers along the lines of Clote's \cite{clote1984recursion} results for the Barrier Ramsey's Theorem, yet several questions remain to be answered to get a full picture.

In particular, the analysis of the Rainbow Ramsey theorem for barriers revealed a subtlety in the correspondence between the order type of a barrier and the computability-theoretic analysis of the corresponding theorem. Indeed, $\RRT^B_k$ admits strong cone avoidance when $B$ is a computably $\omega$-bounded barrier, while it does not in general when $B$ is a computable barrier of order type $\omega^\omega$ (or equivalently an $\omega$-bounded barrier). This subtle distinction does not arise in the analysis of Ramsey's theorem for barriers. 

Barriers or order type $\omega^\omega$ admit a simple combinatorial characterization as the $\omega$-bounded barriers. Then, computably $\omega$-bounded barriers can be considered as barriers of \emph{effective order type $\omega^\omega$.} Is there an appropriate counterpart to the notion of \qt{effective order type} for larger ordinals?

Clote~\cite{clote1984recursion} proved lower bounds on the Barrier Ramsey Theorem by defining his own notion of \emph{canonical barrier} for every order type. This notion was also used in~\cite{clote1986generalization} to prove a generalization of Shoenfield's limit lemma to the hyperarithmetic hierarchy. On the other hand, Carlucci and Zdanowski~\cite{carlucci2014strength} showed that in the case of barriers of order type $\omega^\omega$, the lower bounds could be witnessed by the Schreier barrier, that is, the barrier of exactly $\omega$-large sets. The notion of $\omega$-large set admits a natural generalization to any computable ordinal $\alpha$, called $\alpha$-largeness
(see H\'ajek and Pudl\'ak~\cite{hajek1998metamathematics}). It is thus natural to ask whether exact $\alpha$-largeness can be used instead of Clote's canonical barriers to witness his lower bounds for the Barrier Ramsey Theorem and to give a layered analysis of the Free Set, Thin Set and Rainbow Ramsey Theorem for barriers. We plan to address these questions in future work. 




\bigskip
\begin{center}
\textbf{Acknowledgements}
\end{center}
The first author was partially supported by the grant {\em Logical Methods in Combinatorics} (28094), PRIN 2022.

The authors are thankful to the anonymous reviewers for their careful reading and improvement suggestions.

\bibliographystyle{plain}
\bibliography{biblio}

\end{document}